\newtheorem{theorem}{Theorem}[section]
\newtheorem{lemma}[theorem]{Lemma}
\newtheorem{proposition}[theorem]{Proposition}
\newtheorem{definition}[theorem]{Definition}
\newtheorem{corollary}[theorem]{Corollary}
\newtheorem{conjecture}[theorem]{Conjecture}
\theoremstyle{remark}
\newtheorem{remark}[theorem]{Remark}
\numberwithin{equation}{section}
\newcommand{\qp}{\vskip .12cm}
\newcommand{\hp}{\vskip .2cm}
\newcommand{\p}{\vskip .4cm}
\newcommand{\Z}{\mathbb{Z}}
\newcommand{\Q}{\mathbb{Q}}
\newcommand{\A}{\mathcal{A}}
\newcommand{\B}{\mathcal{B}}
\newcommand{\X}{\mathcal{X}}
\newcommand{\Y}{\mathcal{Y}}
\newcommand{\CZ}{\mathcal{Z}}
\newcommand{\CO}{\mathcal{O}}
\newcommand{\til}{\tilde}
\newcommand{\bsl}{\backslash}
\newcommand{\ra}{\rightarrow}
\newcommand{\xra}{\xrightarrow}
\newcommand{\depth}{\operatorname{depth}}
\newcommand{\CH}{\mathcal{H}}
\newcommand{\sG}{\mathsf{G}}
\newcommand{\sS}{\mathsf{S}}
\newcommand{\uG}{\underline{G}}
\newcommand{\uS}{\underline{S}}
\newcommand{\Lie}{\operatorname{Lie}}
\newcommand{\ad}{\text{ad}}
\newcommand{\Lg}{\mathfrak{g}}
\newcommand{\Lsl}{\mathfrak{sl}_2}
\begin{document}

\title{Inductive structure of Shalika germs and affine Springer fibers}
\author{Cheng-Chiang Tsai}
\begin{abstract}
This article has two parallel perspectives: to demonstrate an inductive structure of Shalika germs, and to show an analogous inductive structure for affine Springer fibers. More precisely, we give an algorithm to compute arbitrary Shalika germs (resp. affine Springer fibers up to stratification) in terms of three ingredients: Shalika germs (resp. affine Springer fibers) for twisted Levi subgroups, a finite list of combinatorial objects, and the numbers of rational points on varieties over the residue field (resp. varieties themselves) among an explicit finite list of such. We also discuss some formal applications of the algorithm to Shalika germs and orbital integrals.
\end{abstract}
\makeatletter
\patchcmd{\@maketitle}
  {\ifx\@empty\@dedicatory}
  {\ifx\@empty\@date \else {\vskip3ex \centering\footnotesize\@date\par\vskip1ex}\fi
   \ifx\@empty\@dedicatory}
  {}{}
\patchcmd{\@adminfootnotes}
  {\ifx\@empty\@date\else \@footnotetext{\@setdate}\fi}
  {}{}{}
\makeatother
\maketitle 

\tableofcontents

\section{Introduction}
Let $F$ be a non-archimedean local field and $k$ be its residue field. Let $G=\underline{G}(F)$ where $\underline{G}$ is a connected reductive group over $F$. We will always assume $\mathrm{char}(k)$ is large enough. In particular all subtori of $G$ should be tamely ramified. No attempt shall be made to achieve a minimal assumption on $\mathrm{char}(k)$.\p

Denote by $\Lg=\text{Lie }G$ and by $C_c^{\infty}(\Lg)$ the space of compactly supported locally constant complex-valued functions on $\Lg$. Let $\CO^G(0)$ be the set of nilpotent orbits in $\Lg$. With appropriate assumptions on $\text{char}(k)$, and thus $\text{char}(F)$, this is a finite set. DeBacker's enhanced version \cite{De02a} of the theorem of Shalika \cite{Sh72} states that for any element $\gamma\in\Lg$, there exist constants $\Gamma_{\CO}(\gamma)\in\Q$ for all $\CO\in\CO^G(0)$ and a lattice $\Lambda_{\gamma}\subset\Lg$ such that the normalized orbital integral $I^G(\gamma,f)$ satisfies
\begin{equation}\label{Shalika}
I^G(\gamma,f)=\sum_{\CO\in\CO^G(0)}\Gamma_{\CO}(\gamma)I^G(\CO,f),
\end{equation}
for any $f\in C_c^{\infty}(\Lg)$ that are locally constant by $\Lambda_{\gamma}$. The numbers $\Gamma_{\CO}(\gamma)$ are called (normalized) {\bf Shalika germs}. Here the the adjective ``normalized'' is the $|D^G(X)|^{1/2}$ factor as in \cite[Sec. 13.12 and 17.11]{Ko05}. Our first main result is an algorithm to compute any Shalika germ for $\gamma\in\Lg$ in terms of the following three ingredients:\hp

(i)$\;\;$ Shalika germs for twisted Levi subgroups. Recall $G'\subset G$ is called a twisted Levi subgroup if, after base change to a tamely ramified extension, it becomes a Levi subgroup.\qp
(ii)$\;$ A finite list (determined by $G$) of combinatorial numbers that essentially arise from nilpotent orbital integrals and/or Weyl groups. We do not try to understand these numbers.\qp
(iii) Geometric numbers which are the numbers of $k$-points on certain varieties over $k$. These varieties come from another finite list (determined by $G$) of families of varieties, described as certain quasi-finite covers of Hessenberg varieties in Section \ref{secgeom}.\p

Results of DeBacker \cite{De02a} and \cite{De02b} are foundational to our method. After that, an application of an idea of Kim and Murnaghan (Lemma \ref{KM}) is used to reduce the problem for $G$ to its twisted Levi subgroups.\p

On the other side, by ``geometrizing'' the method in \cite[Sec. 2]{De02a} (part of which appeared in work of Waldspurger \cite{Wald95}), we are able to translate our algorithm for Shalika germs to one for affine Springer fibers. Recall that when $F=k((t))$, a regular semisimple affine Springer fibers \cite{KL88} is a reduced, locally of finite type scheme over $k$ such that counting its $k$-points amounts to a coset-counting problem that's equivalent to a certain orbital integral for $G\curvearrowright\Lg$. Our second main result (Theorem \ref{Amherst}) is then a combinatorial algorithm which stratifies an arbitrary affine Springer fiber, so that each stratum is an iterated smooth fibration over some slight generalization of affine Springer fibers for a twisted Levi subgroup. In the fibers of these fibrations we see varieties on which we count $k$-points in the Shalika germs computation.\p

Hales had predicted some inductive properties of Shalika germs (see Corollary \ref{homo}) at least around the time of \cite{Ha94}. We also make the remark that our inductive structure of Shalika germs and affine Springer fibers is close to the inductive construction of tame supercuspidal representations of Yu \cite{Yu01}. It will be nice to have a similar result for characters, extending something along the direction of \cite{KM06} or the direction of \cite{AS09}.\p

This article is structured as follows. In Section \ref{secBT} we introduce notations as well as necessary lemmas for all the rest of the article. The article is then split into two logically independent parts: Section \ref{secASF} and all others. In Section \ref{secASF}, the algorithm to inductively understand affine Springer fibers is presented.\p

For the other part, in Section \ref{secmain} we give the algorithm for computing Shalika germs, leaving it to Section \ref{secgeom} for an explicit description of the varieties on which we have to count $k$-points. After that, we give three formal applications of the algorithm in Section \ref{secapp}: a local constancy result and a homogeneity result near a singular semisimple element for Shalika germs, and a uniform bound for orbital integrals on characteristic functions of parahorics.\p

\section*{Acknowledgment} I thank Julee Kim and Fiona Murnaghan for explaining to me their ideas in \cite{KM03} and \cite{KM06}, which had really inspired this work. I want to express my gratitude to Thomas Hales who shared with me many of his fundamental ideas on Shalika germs. The completion of this paper benefits from discussions with Alexis Bouthier, Jessica Fintzen, Alexei Oblomkov, and Zhiwei Yun.\p

\p
\section{Preparation in Bruhat-Tits theory}\label{secBT}
We continue the notations from the introduction and introduce a bit more. However note that in Section \ref{secmain}, \ref{secgeom} and \ref{secapp}, $F$ is a non-archimedean local field while in Section \ref{secASF} we have $F=k((t))$ for some $k=\bar{k}$. In either case, $F$ (and thus $F$-points of groups over $F$) is given the usual analytic topology defined by its valuation. Our $k$ is always the residue field of $F$, and we assume $\mathrm{char}(k)$ is sufficiently large, in a way determined by the absolute root datum of $\underline{G}$.\p

Let $\B_G$ be the (extended) Bruhat-Tits building of $G$. For any $x\in\B_G$, let $G_x$ be the stabilizer of $x$ in $\B_G$ (which contains the parahoric with finite index). For any $d\in\Q$, denote by $\Lg_{x,d}$ the Moy-Prasad filtration lattice \cite{MP96}, and by $\Lg_{x,d+}:=\Lg_{x,d+\epsilon}$ for very small $\epsilon>0$. The subgroup $G_{x,0+}$ is similarly defined. We'll use the notation $\Lg_{x,d:d+}:=\Lg_{x,d}/\Lg_{x,d+}$, and write $\sG_x$ the reductive quotient at $x$, i.e. the reductive group over $k$ for which $\sG_x(k)\cong G_{x,0:0+}$ ($\sG_x$ unfortunately might be disconnected).\p

Fix $x\in\B_G$ with rational coordinates. Write $m$ the least common multiple of the coordinates, by which we mean $m$ is the smallest integer such that $d\not\in\frac{1}{m}\Z\Rightarrow\Lg_{x,d:d+}=0$. Assume $\text{char}(k)\nmid m$. Then Reeder and Yu gave a description of $\Lg_{x,d:d+}$ as follows\hp

\begin{lemma}\cite[Thm. 4.1]{RY14}\label{RY} Fix a choice of $m$-th root of unity $\zeta_m\in\bar{k}^{\times}$. There exists a connected reductive group $\bar{G}$ over $k$ together with an order $m$ automorphism $\theta$ on $\bar{G}\otimes\bar{k}$ such that we have natural isomorphisms
\[\sG_x\cong\bar{G}^{\theta}.\]
and for every $n\in\Z$,
\[\Lg_{x,\frac{n}{m}:\frac{n}{m}+}\cong\bar{\Lg}^{(n)}(k),\]
where we put $\bar{\Lg}=\text{Lie }\bar{G}$, and $\bar{G}^{\theta}$ as well as $\bar{\Lg}^{(n)}:=\bar{\Lg}^{\theta=\zeta_m^n}$ are defined over $k$.
\end{lemma}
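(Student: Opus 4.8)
The plan is to deduce this statement --- which is {\cite[Thm.~4.1]{RY14}} --- from a base change to a tamely and totally ramified extension of degree $m$ followed by Galois descent; the order-$m$ automorphism $\theta$ and the $\Z/m$-grading will be produced directly by the descent datum, so the only real inputs are Moy--Prasad theory and the standard formalism of Bruhat--Tits group schemes (this is close in spirit to Vinberg's theory of $\theta$-groups, but here one is handed the automorphism rather than having to reconstruct it from a grading).

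For clarity I would first treat the case where $\underline G$ is split over $F$. Fix a uniformizer $\varpi_F$ of $F$ and set $E:=F[\varpi_E]$ with $\varpi_E^m=\varpi_F$; since $\mathrm{char}(k)\nmid m$ this is tamely and totally ramified of degree $m$. Pass to a maximal unramified extension $\breve F$ (residue field $\bar k$, and $\mu_m\subset\breve F$ because $\mathrm{char}(k)\nmid m$) and put $\breve E:=\breve F[\varpi_E]$, so that $\mathrm{Gal}(\breve E/\breve F)=\langle\sigma\rangle\cong\Z/m$; normalise the chosen $\zeta_m$ so that $\sigma(\varpi_E)=\zeta_m^{-1}\varpi_E$ (the inverse only matches the indexing in the statement), and normalise the valuation so that $\varpi_E$ has valuation $1$. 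Because the Moy--Prasad filtration of $\Lg$ at $x$ is supported on $\tfrac1m\Z$ with least common denominator exactly $m$, and the value group of $\breve E$ is $\tfrac1m\Z$, over $\breve E$ that filtration jumps only at integers; equivalently $x$ is a special --- hence, $\underline G$ being split over $\breve E$, hyperspecial --- point of $\B_{\underline G_{\breve E}}$. Thus the reductive quotient $\bar G$ of $\underline G_{\breve E}$ at $x$ is a connected reductive group over $\bar k$, $\bar\Lg:=\Lie\bar G=(\Lg\otimes\breve E)_{x,0:0+}$, and by the periodicity of the Moy--Prasad filtration under multiplication by $\varpi_E$, for every $n\in\Z$ multiplication by $\varpi_E^{-n}$ gives an isomorphism $(\Lg\otimes\breve E)_{x,n:n+}\xrightarrow{\ \sim\ }\bar\Lg$ of $\bar k$-vector spaces.

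Next I would bring in the descent datum. The point $x$ has $F$-rational coordinates, so it is fixed by $\langle\sigma\rangle$; hence $\sigma$ preserves the Bruhat--Tits group scheme at $x$ over $\breve E$ (the one whose integral points are the full stabiliser $G_{\breve E,x}$) and acts on $\bar G$ and on $\bar\Lg$ by a $\bar k$-automorphism $\theta$ with $\theta^m=1$. If $\theta$ had order $m'<m$ then its eigenvalues on $\bar\Lg$ would be $m'$-th roots of unity, so (by the identification in the next sentence) $\Lg_{x,d:d+}$ would vanish for $d\notin\tfrac1{m'}\Z$, contradicting minimality of $m$; thus $\theta$ has order exactly $m$. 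Galois descent along $\breve E/\breve F$, and then along $\breve F/F$ (that is, along $\mathrm{Gal}(\bar k/k)$, which commutes with $\theta$), identifies the reductive quotient at $x$ over $F$ with the $\theta$-fixed subgroup: $\sG_x\cong\bar G^{\theta}$ over $k$ (note that $\bar G^{\theta}$ need not be connected even though $\bar G$ is, matching the warning that $\sG_x$ may be disconnected). For the graded pieces, faithfully-flat descent gives $\Lg_{x,\frac nm:\frac nm+}=(\Lg\otimes\breve E)_{x,n:n+}^{\langle\sigma\rangle}$ (and then the further descent to the $k$-structure); writing a representative as $\varpi_E^{n}\bar X$ with $\bar X\in\bar\Lg$ one computes $\sigma(\varpi_E^{n}\bar X)=\zeta_m^{-n}\varpi_E^{n}\,\theta(\bar X)$, so this element is $\sigma$-fixed precisely when $\theta(\bar X)=\zeta_m^{n}\bar X$. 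Transporting along $\varpi_E^{-n}$ therefore yields $\Lg_{x,\frac nm:\frac nm+}\cong\bar\Lg^{\theta=\zeta_m^{n}}=:\bar\Lg^{(n)}$, and $\bar\Lg=\bigoplus_{n=0}^{m-1}\bar\Lg^{(n)}$ is the asserted $\Z/m$-grading, with bracket inherited from $\Lg$.

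The step I expect to be the main obstacle is the case where $\underline G$ is not split over $F$. There one must first enlarge $E$, within the tame closure, to a splitting field of $\underline G$; this replaces $m$ by a multiple $\tilde m$ and $\langle\sigma\rangle$ by $\Z/\tilde m$, but (because the Moy--Prasad data of $x$ over $F$ only involved denominators dividing $m$) $\theta$ still has order exactly $m$, so one must check the construction is independent of the auxiliary choices. More seriously, $\theta$ now decomposes as the product of an inner part --- coming from $\zeta_m$ together with the cocharacter that records the position of $x$ inside its facet --- and a pinned, diagram part coming from $\sigma$ acting on the $F$-form of $\underline G$; keeping a $\sigma$-stable maximal torus through $x$ and disentangling these two contributions is the bulk of the bookkeeping, and is exactly what is carried out in {\cite[\S4]{RY14}}. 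Throughout, the hypothesis that $\mathrm{char}(k)$ is large and prime to $m$ is used both to keep $E/F$ tame and to guarantee that reductive quotients of parahorics and the Moy--Prasad filtration behave as in the residue-characteristic-zero situation.
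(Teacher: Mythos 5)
Your proposal is correct in outline and follows essentially the same route as the paper, which does not prove this lemma itself but cites \cite[Thm.~4.1]{RY14} and only remarks that $\bar{G}$ is obtained as the reductive quotient at $x$ after a tamely ramified base change of ramification index $m$ making $x$ hyperspecial --- exactly the construction you carry out, with $\theta$ induced by the Galois action and the grading read off from eigenvalues of $\theta$. The points you defer (the tame-descent identification $\sG_x\cong\bar{G}^{\theta}$ for a \emph{ramified} extension, and the non-split bookkeeping) are precisely the content of \cite[\S 4]{RY14}, so deferring them is consistent with the paper's treatment.
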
\hp

We will talk about regularity, semisimplicity, nilpotency, etc, of elements in $\Lg_{x,\frac{n}{m}:\frac{n}{m}+}$ by identifying them with their images in $\bar{\Lg}$. We nevertheless remark that semisimplicity (resp. nilpotency) of elements in $\bar{\Lg}^{(n)}$ agrees with the usual notion that their $\bar{G}^{\theta}$-orbit is closed (resp. its closure contains $0$), see \cite[Prop. 1,2,3]{Vi76} and their proofs. The group $\bar{G}$ is obtained by taking the reductive quotient at $x$ after base change to a tamely ramified extension of ramification index $m$ so that $x$ becomes hyperspecial. We note that graded Lie algebras were largely studied as initiated by Vinberg \cite{Vi76}.\p

For our convenience we fix a maximal split torus $\uS\subset\uG$ and denotes by $S=\uS(F)$ and $\A\subset\B_G$ the corresponding apartment. Following the spirit of DeBacker \cite{De02b}, we define\p

\begin{definition}\label{nildat} We define a {\bf nilpotent datum} to be a triple $(x,d,e)$, where $x\in\A$, $d\in\Q$ with denominator non-zero in $k$, and $e\in\Lg_{x,d:d+}$, so that $e$ is nilpotent in the sense of Lemma \ref{RY}. We consider $(x,d,e)$ and $(x,d,e')$ to be the same datum if $e$ and $e'$ are in the same $G_{x,0:0+}$-orbit in $\Lg_{x,d:d+}$.
\end{definition}\hp

Under the setup after Lemma \ref{RY}, there are only finitely many nilpotent $\sG_x$-orbits in $\bar{\Lg}^{(n)}$; one shows that a nilpotent orbit in $\bar{\Lg}$ splits into only finitely many such orbits by the usual method of computing the tangent space. Consequently there are only finitely many nilpotent $G_{x,0:0+}$-orbits in $\Lg_{x,\frac{n}{m}:\frac{n}{m}+}$. The theory of $\mathfrak{sl}_2$-triple works similarly so that for each nilpotent datum, we can complete $e$ to be an $\Lsl$-triple $(e,h,f)\in\Lg_{x,d:d+}\times\Lg_{x,0:0+}\times\Lg_{x,(-d):(-d)+}$.\p

The torus $\uS$ has a reduction $\sS\subset\sG_x$, which is again a maximal ($k$-)split torus. Fix a nilpotent datum $(x,d,e)$ and suppose $x\in\A$. We now define some associated objects. By \cite[Appendix A]{De02b}, there exists some cocharacter $\lambda:\mathbb{G}_m/_k\ra\sG_x$ such that $d\lambda(1)=h$, $\lambda$ acts on $e$ by weight $2$, and $\lambda$ acts on $\bar{\Lg}$ by weight bounded by, say, $\frac{1}{2}\mathrm{char}(k)$. By $G_{x,0:0+}$-conjugation on both $(e,h,f)$ and $\lambda$ we may and shall assume that the image of $\lambda$ is in $\sS$. This determines $\lambda\in X_*(\sS)$ up to the Weyl group $N_{\sG_x}(\sS)/Z_{\sG}(\sS)$. Since $X_*(\sS)\cong X_*(\uS)$, we also think of $\lambda$ as a vector on the apartment $\A$.\p

\begin{definition}\label{assococh} We call $\lambda\in X_*(\sS)\cong X_*(\uS)$ the associated cocharacter of $(x,d,e)$. We write $\Lg_{x,d:d+}=\bigoplus_{i\in\Z}\Lg_{x,d:d+,i}$ where $\Lg_{x,d:d+,i}$ is the subspace on which $\lambda$ acts by weight $i$. We also use the convention $\Lg_{x,d:d+,\le 1}:=\bigoplus_{i\le 1}\Lg_{x,d:d+,i}$, etc.
\end{definition}

Also, \cite[Def. 5.3.4]{De02b} defines a nilpotent orbit $\CO$ in $\Lg$ associated to $(x,d,e)$ (written $\CO(F^*,e)$ there with $x\in F^*\subset\B_G$). It is characterized by the property (see the proof of \cite[Cor. 5.2.4]{De02b}) that for any nilpotent orbit $\CO'$ in $\Lg$, $\CO'\cap e+\Lg_{x,d:d+,\le 1}+\Lg_{x,d+}\not=\emptyset\Leftrightarrow\mathrm{clos}(\CO')\supset\CO$, where $\mathrm{clos}(\CO')$ is the closure in the $p$-adic (analytic) topology.\p

\begin{definition}\label{assoorb} We call $\CO$ the {\bf associated orbit} of $(x,d,e)$.\p
\end{definition}

By \cite[Thm. 5.6.1]{De02b}, every nilpotent orbit in $\Lg$ appears as the associated orbit for some nilpotent datum.\p

\begin{definition}\cite[Def. 3.3.6]{AD02} For $\gamma\in\Lg$,  $\depth(\gamma):=\max\{d\,|\,\gamma\in\Lg_{y,d}\text{ for some }y\}$.
\end{definition}

Here $\depth(\gamma)$ is always defined as a rational number with denominator bounded by some integer determined by $\uG$ unless $\gamma$ is nilpotent, in which case we put $\depth(\gamma)=+\infty$.\p

\begin{definition}
If $\gamma$ is semisimple, contained in the Lie algebra of a maximal torus $T$, one say $\gamma$ is {\bf good} \cite[Def. 5.2]{AR00} if for every absolute root $\alpha\in\Phi(G/_{\bar{F}},T/_{\bar{F}})$, either $d\alpha(\gamma)=0$ or $\text{val}(d\alpha(\gamma))=\text{depth}(\gamma)$. This notion is independent of $T$.\p
\end{definition}

In the next three lemmas let us fix $\gamma\in\Lg$ non-nilpotent and put $d=\depth(\gamma)$.\p

\begin{lemma}\label{JorDec} There exists $\gamma_0\in\Lg$, good of depth $d$, such that $[\gamma,\gamma_0]=0$ and $\gamma':=\gamma-\gamma_0$ has depth greater than $d$.
\end{lemma}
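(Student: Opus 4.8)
The plan is to produce $\gamma_0$ as a kind of ``leading term'' of $\gamma$ with respect to the Moy--Prasad filtration, using the point of the building where $\gamma$ achieves its depth. Since $d = \depth(\gamma)$, there is some $y \in \B_G$ with $\gamma \in \Lg_{y,d}$; after moving $y$ inside its facet we may assume $y$ has rational coordinates, so Lemma \ref{RY} applies and the graded piece $\Lg_{y,d:d+}$ is identified with $\bar{\Lg}^{(n)}(k)$ for the appropriate $n$ (with $d = n/m$). Let $\bar{\gamma}$ be the image of $\gamma$ in $\Lg_{y,d:d+}$; it is nonzero by the definition of depth. The first key step is to take the Jordan decomposition $\bar{\gamma} = \bar{\gamma}_s + \bar{\gamma}_n$ in the graded Lie algebra $\bar{\Lg}^{(n)}$ in the sense recalled after Lemma \ref{RY} (closed vs. $0$ in closure of the $\bar G^\theta$-orbit); this decomposition exists and is $\bar G^\theta$-equivariant in large characteristic, and the semisimple and nilpotent parts commute. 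I will then want to lift $\bar{\gamma}_s$ to a genuine element $\gamma_0 \in \Lg_{y,d}$ that is (i) semisimple, (ii) commutes with $\gamma$, and (iii) good of depth exactly $d$.

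For the lift, the standard device is a cocharacter/centralizer argument: choose a maximal torus $\bar{T}$ of $\bar{G}^\theta$ containing $\bar{\gamma}_s$ in its Lie algebra, which corresponds to a tamely ramified maximal torus $\underline{T} \subset \underline{G}$ with $y$ in its apartment, and inside $\Lie\underline{T}(F) \cap \Lg_{y,d}$ pick $\gamma_0$ reducing to $\bar{\gamma}_s$. Because $\bar{\gamma}_s$ is semisimple in the graded sense, every root value $d\alpha(\gamma_0)$ is either $0$ or a unit times $t^{n/m}$ (this is exactly what semisimplicity in $\bar{\Lg}^{(n)}$ encodes about the $\bar T$-weights), so $\gamma_0$ is automatically good of depth $d$. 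That $\gamma_0$ commutes with $\gamma_0$'s centralizer forces $\gamma' = \gamma - \gamma_0$ to have image $\bar{\gamma}_n$ in $\Lg_{y,d:d+}$; but $\bar{\gamma}_n$ is nilpotent, not zero, so a priori $\depth(\gamma - \gamma_0)$ could still equal $d$ rather than exceed it. This is the main obstacle, and the way around it is the one used by DeBacker: one does not lift naively but instead uses an $\mathfrak{sl}_2$-triple for $\bar{\gamma}_n$ together with the associated cocharacter $\lambda$ to move $\gamma$, within its $G_{y,d}$-coset, to a point $y' = y + s\lambda$ slightly deformed along $\lambda$, where the nilpotent part gets pushed into higher filtration.

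Concretely, the second key step is: after replacing $\bar\gamma$ by a $\bar G^\theta$-conjugate we may assume $\bar\gamma_n$ lies in the weight-$2$ space of a cocharacter $\lambda$ commuting with $\bar\gamma_s$ (the associated cocharacter construction, available by \cite[Appendix A]{De02b}), and then conjugating by $\lambda(t^{\epsilon})$ for suitable small $\epsilon$ — equivalently, re-examining $\gamma$ at the shifted point $y' \in \B_G$ — one arranges that the nilpotent contribution lands in $\Lg_{y',d+}$ while $\gamma_0$, being fixed by $\lambda$, still reduces to the semisimple part at $y'$ and still has depth $d$. The commutation $[\gamma,\gamma_0]=0$ is preserved throughout because $\gamma_0$ lies in $Z_{\Lg}(\gamma_0)$ and $\gamma$ lies in the same centralizer at the graded level; lifting this commutation from the graded setting to $\Lg$ itself uses that $Z_G(\gamma_0)$ is a (twisted Levi) reductive subgroup of $G$ containing $y,y'$ in its building, into which $\gamma$ necessarily falls. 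Finally $\gamma' = \gamma - \gamma_0 \in \Lg_{y',d+}$ gives $\depth(\gamma') > d$, completing the proof. The one genuinely delicate point to check carefully is that the deformation along $\lambda$ does not decrease $\depth(\gamma_0)$ or destroy goodness — this holds precisely because $\lambda$ fixes $\gamma_0$ and $\gamma_0$ is supported on roots trivial or of valuation exactly $d$, so its image is unchanged at every point along the ray $y + s\lambda$.
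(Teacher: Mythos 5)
Your overall strategy---reduce to the graded piece at a depth-achieving point, take the Vinberg-style Jordan decomposition there, lift the semisimple part, and push the nilpotent part deeper along an associated cocharacter---is not what the paper does, and it has a genuine gap at its crucial step: exact commutation. Knowing $[\bar\gamma,\bar\gamma_s]=0$ in $\Lg_{y,d:d+}$ only tells you that $[\gamma,\gamma_0]$ lies in a deeper lattice, not that it vanishes; an arbitrary lift of $\bar\gamma_s$ will almost never commute with $\gamma$ itself. For instance, for $\gamma=\left(\begin{smallmatrix}0&1\\ \pi&0\end{smallmatrix}\right)$ in $\mathfrak{gl}_2$ (depth $1/2$ at the barycenter $y$ of a chamber), the element $\gamma_0=\left(\begin{smallmatrix}0&1\\ \pi(1+\pi)&0\end{smallmatrix}\right)$ is a semisimple lift of the same class in $\Lg_{y,1/2:1/2+}$, lies in a tame maximal torus, yet $[\gamma,\gamma_0]\neq0$. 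Your justification (``$\gamma$ necessarily falls into $\operatorname{Lie}Z_G(\gamma_0)$ since it commutes at the graded level'') assumes exactly what must be proved. The only way to force $[\gamma,\gamma_0]=0$ is to build $\gamma_0$ out of $\gamma$'s honest Jordan decomposition in $\Lg$: take the semisimple part $\gamma_s$, place it in the Lie algebra of a maximal torus, and truncate it there. That is precisely the paper's (one-line) proof: apply \cite[Prop.~5.4]{AR00} to $\gamma_s$, which produces a good $\gamma_0$ of depth $d$ commuting with everything that commutes with $\gamma_s$, with $\gamma_s-\gamma_0$ of depth $>d$; no cocharacter deformation is then needed.

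Two further steps of your write-up would also fail as stated. First, goodness of the lift is not automatic: graded semisimplicity of $\bar\gamma_s$ only gives, for each absolute root $\alpha$ of an adapted torus, that $\mathrm{val}(d\alpha(\gamma_0))$ is either exactly $d$ or strictly larger, whereas goodness requires ``exactly $d$ or $d\alpha(\gamma_0)=0$''; a root that kills the reduction can take a nonzero value of valuation $>d$ on a careless lift (e.g.\ lift $\mathrm{diag}(1,1,2)$ at depth $0$ to $\mathrm{diag}(1,1+\pi,2)$). Arranging a good lift is exactly the nontrivial content of the cited proposition of Adler--Roche, so it cannot be waved through. Second, for $n\not\equiv0$ the element $\bar\gamma_s$ lies in $\bar{\Lg}^{(n)}$, which is a representation of $\bar G^{\theta}$, not its Lie algebra, so ``a maximal torus of $\bar G^{\theta}$ containing $\bar\gamma_s$ in its Lie algebra'' is not the right object; you would need a Cartan subspace inside a $\theta$-stable maximal torus of $\bar G$, and the lifting of that datum to a tamely ramified maximal torus of $G$ with $y$ in its building is itself a statement requiring proof. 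You also implicitly use that the image of $\gamma$ at a depth-achieving point is non-nilpotent (so $\bar\gamma_s\neq0$); this is true (cf.\ the argument in Theorem \ref{Amherst}(i) and \cite[Sec.~3.2]{AD02}) but must be invoked. With all of these repaired, your route essentially re-proves the Adler--Roche result by hand; the paper's proof simply cites it.
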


\begin{proof} This follows from applying \cite[Prop. 5.4]{AR00} to the semisimple part of $\gamma$.
\end{proof}

Write $G'=Z_G(\gamma_0)$ the centralizer of $\gamma_0$. This is a twisted Levi subgroup of $G$. Let $\Lg'=\text{Lie }G'$ and $\B_{G'}$ be the (extended) Bruhat-Tits building of $G'$. We fix an embedding $\B_{G'}\hookrightarrow\B_G$, which is canonical up to translation by $X_*(Z(G'))\otimes\mathbb{R}$.\p

\begin{lemma}\cite[Lemma 2.3.3]{KM03}\label{KM} For any $x\in\B_G$ and $\gamma^{\epsilon}\in\Lg_{d+}':=\bigcup_{y\in\B_{G'}}\Lg_{y,d+}'$, we have $\gamma_0+\gamma^{\epsilon}\in\Lg_{x,d}\Rightarrow x\in\B_{G'}$.
\end{lemma}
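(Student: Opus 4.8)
We sketch the argument (this is \cite[Lemma 2.3.3]{KM03}). The plan is to propagate the hypothesis along a geodesic to a point lying just outside $\B_{G'}$, and there exhibit a root-space component of $\gamma_0+\gamma^\epsilon$ that is too shallow, using the goodness of $\gamma_0$.

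First, since $G'=Z_G(\gamma_0)$ with $\gamma_0$ semisimple, $\gamma_0$ lies in $\Lie(Z(G'))$ and so commutes with all of $\Lg'$; being central in $G'$ and good of depth $d$, it satisfies $\gamma_0\in\Lg'_{y,d}\subseteq\Lg_{y,d}$ for every $y\in\B_{G'}$. Choose $y\in\B_{G'}$ with $\gamma^\epsilon\in\Lg'_{y,d+}\subseteq\Lg_{y,d+}$; then $\gamma_0+\gamma^\epsilon\in\Lg_{y,d}$. Together with the hypothesis $\gamma_0+\gamma^\epsilon\in\Lg_{x,d}$ and the convexity of Moy--Prasad filtrations (if $\xi\in\Lg_{x,r}\cap\Lg_{y,r}$ then $\xi\in\Lg_{z,r}$ for every $z$ on the geodesic $[x,y]$, as one checks root-by-root in a common apartment), this gives $\gamma_0+\gamma^\epsilon\in\Lg_{z,d}$ for all $z\in[x,y]$.

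Now suppose $x\notin\B_{G'}$. As $\B_{G'}$ is closed and convex, $[x,y]$ leaves it at some point $z^*\in\B_{G'}$, and every point of $[x,y]$ strictly between $z^*$ and $x$ is outside $\B_{G'}$. Fix such a point $z$ very close to $z^*$. After a tame base change over which $G'$ becomes a Levi subgroup (where $\mathrm{char}(k)$ large is used) and a choice of $G'$-adapted apartment through $z^*$, write $z=g\cdot z'$ with $z'$ in that apartment near $z^*$ and $g$ a product of affine root subgroup elements $\exp(\xi_\alpha)$, $\alpha$ outside the root system of $G'$, small enough that $\Lg_{z,d}=\mathrm{Ad}(g)\Lg_{z',d}$ but with at least one $\xi_\alpha$ just below the threshold defining $G_{z',0}\cap U_\alpha$, since $z\notin\B_{G'}$. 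For that $\alpha$, the $\Lg_\alpha$-component of $\mathrm{Ad}(g^{-1})(\gamma_0+\gamma^\epsilon)$ is $d\alpha(\gamma_0)\,\xi_\alpha$ up to deeper terms, of valuation $\mathrm{val}(d\alpha(\gamma_0))+\mathrm{val}(\xi_\alpha)=d+\mathrm{val}(\xi_\alpha)$ --- using $\mathrm{val}(d\alpha(\gamma_0))=d$, which is goodness of $\gamma_0$ --- and this falls strictly below the cut-off for $\Lg_{z',d}\cap\Lg_\alpha$; the contributions from $\gamma^\epsilon$ (of depth $>d$) and from the higher Lie brackets have strictly larger valuation and cannot cancel it. Hence $\gamma_0+\gamma^\epsilon\notin\Lg_{z,d}$, a contradiction, so $x\in\B_{G'}$.

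The step I expect to be the main obstacle is this last one: making the phrase ``$z\notin\B_{G'}$, so some $\xi_\alpha$ is large'' precise in affine-root-group coordinates, and verifying that no cancellation occurs among the higher bracket terms and the $\gamma^\epsilon$-terms. It is the uniformity in goodness --- $\mathrm{val}(d\alpha(\gamma_0))=d$ for \emph{every} root $\alpha$ outside $G'$ --- that lets the same estimate run along every direction transverse to $\B_{G'}$. Alternatively one can package matters by first reducing to the semisimple case: $\gamma_0$ commutes with $\gamma^\epsilon$, so the semisimple part of $\gamma_0+\gamma^\epsilon$ equals $\gamma_0+\gamma^\epsilon_s$; since in large residue characteristic the Moy--Prasad lattices are stable under Jordan decomposition and $\gamma^\epsilon_s$ again has depth $>d$, this still lies in $\Lg_{x,d}$ and is a semisimple element whose depth-$d$ leading good part, in the sense of \cite[Prop. 5.4]{AR00}, is $\gamma_0$, with centralizer $G'$; then $x\in\B_{G'}$ follows from the standard fact that the building of the centralizer of the leading good part of a semisimple element is precisely the locus where that element attains its depth.
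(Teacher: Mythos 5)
The paper does not actually prove this lemma: it is imported verbatim as \cite[Lemma 2.3.3]{KM03}, so the only ``official'' proof is Kim--Murnaghan's. Your outline (propagate $\gamma_0+\gamma^\epsilon\in\Lg_{\cdot,d}$ from $x$ to a point $y\in\B_{G'}$ by convexity of Moy--Prasad conditions in a common apartment, then derive a contradiction at the exit point of $\B_{G'}$ using goodness of $\gamma_0$) is the right shape and is close in spirit to the arguments in \cite{KM03} and \cite{AR00}. The convexity step is fine.

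However, the step you flag as the main obstacle is a genuine gap, not just a technical verification. Your no-cancellation claim rests on ``the contributions from $\gamma^\epsilon$ (of depth $>d$) have strictly larger valuation,'' but $\gamma^\epsilon\in\Lg'_{y,d+}$ only at the \emph{one} point $y$; at the exit point $z^*$ (and at $z'$ nearby) all that the convexity argument yields is $\gamma^\epsilon\in\Lg'_{z^*,d}$, i.e.\ depth $\ge d$, not $>d$. Concretely, a root component $\gamma^\epsilon_{\beta'}$ with $\beta'\in\Phi(G')$ can have valuation exactly $d-\beta'(z^*)$, and then the bracket $[\xi_{\alpha-\beta'},\gamma^\epsilon_{\beta'}]$ lands in $\Lg_\alpha$ with valuation comparable to the leading term $d\alpha(\gamma_0)\xi_\alpha$, so cancellation is not excluded by the stated estimate. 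One needs either a sharper control of $\gamma^\epsilon$ near $z^*$ or a different organization (e.g.\ first reducing to showing $\gamma_0\in\Lg_{x,d}$ and invoking $\{x:\gamma_0\in\Lg_{x,d}\}=\B_{G'}$, which is how the literature proceeds). Your closing ``alternative'' does not repair this: the ``standard fact'' that $\B_{G'}$ equals the locus where $\gamma_0+\gamma^\epsilon_s$ attains depth $d$ is false as an equality (the locus where $\gamma^\epsilon_s$ itself has depth $\ge d$ is generally a proper subset of $\B_{G'}$), and the correct inclusion is precisely the lemma you are trying to prove, so that route is circular.
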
\hp

\begin{lemma}\label{finite} For any $x\in\A$ (resp. $x\in\B_G$), there is a finite subset $W_x\subset N_G(S)$ (resp. $W_x\subset G$) such that for $g\in G$, $\ad(g)\gamma\in\Lg_{x,d}\Rightarrow g\in G_x\cdot W_x\cdot G'$ and $\ad(g)\gamma_0\in\Lg_{x,d}\Leftrightarrow g\in G_x\cdot W_x\cdot G'$.
\end{lemma}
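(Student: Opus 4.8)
The plan is to reduce to the building of $G'$ via Lemma \ref{KM} and then exploit a compactness/finiteness property of $G'$-orbits inside a fixed Moy--Prasad lattice. I will focus on the case $x\in\A$; the case $x\in\B_G$ follows by first replacing $x$ by a point of $\A$ in its $G$-orbit and absorbing the conjugating element into $W_x$. Recall $\gamma=\gamma_0+\gamma'$ with $\gamma_0$ good of depth $d$ and $\depth(\gamma')>d$, and $G'=Z_G(\gamma_0)$.

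First I would treat the implication for $\gamma_0$. Suppose $\ad(g)\gamma_0\in\Lg_{x,d}$. Since $\gamma_0$ is good semisimple of depth $d$, the set of $y\in\B_G$ with $\gamma_0\in\Lg_{y,d}$ is exactly (a neighborhood of, in fact a union of facets determined by) the building $\B_{Z_G(\gamma_0)}=\B_{G'}$ embedded in $\B_G$ together with the sign/weight conditions coming from $d\alpha(\gamma_0)$ having valuation exactly $d$ on roots outside $G'$; this is the content of the good-element analysis in \cite{AR00}, and it says $\{\,y:\gamma_0\in\Lg_{y,d}\,\}$ is a locally finite union of facets, contained in $\B_{G'}$, and stable under $G'$. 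Consequently $\ad(g)\gamma_0\in\Lg_{x,d}$ means $\gamma_0\in\Lg_{g^{-1}x,d}$, i.e. $g^{-1}x\in\B_{G'}$ (after the canonical identification), and moreover $g^{-1}x$ lies in one of finitely many $G'$-orbits of facets of $\B_{G'}$ that map into the (finite mod $G'$) set of facets $\{z:\gamma_0\in\Lg_{z,d}\}$. Pick coset representatives: there is a finite set $W_x\subset N_G(S)$ (enlarge to $G$ in the building case) with $g^{-1}x\in W_x^{-1}\cdot G'\cdot(\text{stab})$, equivalently $g\in G_x\cdot W_x\cdot G'$. Conversely any such $g$ plainly satisfies $\ad(g)\gamma_0\in\Lg_{x,d}$ because each piece ($G_x$ fixes $x$ and preserves $\Lg_{x,d}$; $W_x$ was chosen to move $x$ into $\{\gamma_0\in\Lg_{\cdot,d}\}$; $G'=Z_G(\gamma_0)$ fixes $\gamma_0$). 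This gives the ``$\Leftrightarrow$'' for $\gamma_0$.

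Next, the implication for $\gamma$. Assume $\ad(g)\gamma\in\Lg_{x,d}$. Write $\ad(g)\gamma=\ad(g)\gamma_0+\ad(g)\gamma'$. Here $\ad(g)\gamma'$ has depth $>d$, so $\ad(g)\gamma'\in\Lg'_{d+}$ — wait, $\gamma'$ need not lie in $\Lg'$; but $\gamma'=\gamma-\gamma_0$ commutes with $\gamma_0$, hence $\gamma'\in\Lg'=\Lie Z_G(\gamma_0)$, so indeed $\ad(g)\gamma'\in\ad(g)\Lg'_{>d}\subset\Lg'_{\ge d+}$ after identifying via $g$; more precisely $\ad(g)\gamma'$ lies in the $G$-conjugate $\ad(g)\Lg'$, which is $\Lie Z_G(\ad(g)\gamma_0)$, and has positive depth in that copy, hence $\ad(g)\gamma'\in\Lg^{\ad(g)\gamma_0}_{d+}$ in DeBacker's union-of-lattices sense. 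Now apply Lemma \ref{KM} with $\gamma_0$ replaced by $\ad(g)\gamma_0$ and $\gamma^\epsilon=\ad(g)\gamma'$: from $\ad(g)\gamma_0+\ad(g)\gamma'\in\Lg_{x,d}$ we conclude $x\in\B_{Z_G(\ad(g)\gamma_0)}=\ad(g)\B_{G'}$, i.e. $g^{-1}x\in\B_{G'}$. But this also forces $\gamma_0\in\Lg_{g^{-1}x,d}$ (the depth-$d$ part of $\ad(g^{-1})(\ad(g)\gamma)=\gamma$ at $g^{-1}x$ is $\gamma_0$, using $\depth(\gamma')>d$), so we are back in the situation of the previous paragraph and conclude $g\in G_x\cdot W_x\cdot G'$ with the very same finite set $W_x$. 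This proves the one-directional implication for $\gamma$, completing the lemma.

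The main obstacle I anticipate is the bookkeeping around ``$\gamma'$ lives in which Lie algebra and has which depth after conjugation'': one must consistently track that $\gamma'\in\Lg'$, that conjugation by $g$ carries $\Lg'$ to $\Lie Z_G(\ad(g)\gamma_0)$, and that Moy--Prasad depth is preserved so that Lemma \ref{KM} applies verbatim to the conjugated data. The other technical point is the precise input from \cite{AR00} that $\{y:\gamma_0\in\Lg_{y,d}\}$ is $G'$-stable and a union of finitely many $G'$-orbits of facets — this is standard for good elements but should be cited carefully. Everything else (choosing the finite coset representatives $W_x\subset N_G(S)$ in the apartment case, and conjugating a general $x\in\B_G$ into $\A$) is routine.
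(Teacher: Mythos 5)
Your overall strategy is the paper's: use Lemma \ref{KM} to force $g^{-1}x\in\B_{G'}$ (the paper applies it directly to $\gamma_0+\gamma'\in\Lg_{g^{-1}x,d}$, which avoids your conjugation bookkeeping, but the two are equivalent), and the converse direction for $\gamma_0$ is fine since $\gamma_0$ is central in $\Lg'$ of depth $d$. The gap is in the finiteness step, which is the real content of the lemma. Note that $g_2\in G_x\, g_1\, G'$ exactly when $g_2^{-1}x$ and $g_1^{-1}x$ lie in the same $G'$-orbit of \emph{points}; so what you must prove is that the intersection of the $G$-orbit of $x$ with $\B_{G'}$ decomposes into finitely many $G'$-orbits of points. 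You only argue that $g^{-1}x$ lies in one of finitely many $G'$-orbits of \emph{facets} of $\B_{G'}$ and then ``pick coset representatives.'' That deduction fails: a facet is in general positive-dimensional and its stabilizer in $G'$ does not act transitively on its points, so two elements whose translates $g_i^{-1}x$ lie in the same $G'$-orbit of facets need not lie in the same double coset. Nothing in your argument uses the one fact that makes finiteness true, namely that $g^{-1}x$ is constrained to the discrete $G$-orbit of $x$.

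The paper supplies exactly this missing input: after using right multiplication by $G'$ to move $g^{-1}x$ into a fixed apartment $\A'\subset\A$ of $\B_{G'}$, it notes that the set $P$ of points of $\A$ in the $G$-orbit of $x$ is discrete and is a finite union of $S$-orbits (because $S$ acts on $\A$ by translation by a lattice), and that each such $S$-orbit meets $\A'$ in at most a single $S'$-orbit; picking $w_j\in N_G(S)$ with $w_j^{-1}x\in P_j$ then produces the finite set $W_x$, and this is also what justifies your unaddressed assertion that $W_x$ may be taken inside $N_G(S)$ when $x\in\A$ (one needs that points of $\A$ in the same $G$-orbit are $N_G(S)$-conjugate, after first conjugating $g^{-1}x$ into $\A'$ by $G'$). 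Your argument could be repaired along a different route — e.g. $G\cdot x\cap\B_{G'}$ is closed, discrete and $G'$-stable, and $G'$ acts on $\B_{G'}$ with compact quotient, hence the intersection is a finite union of $G'$-orbits — but as written the central finiteness claim is asserted rather than proved.
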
\qp

\begin{proof} Fix $\A'\subset\A$ some apartment in $\B_{G'}$. Let $w\in G$ be such that $\ad(w)\gamma\in\Lg_{x,d}$. We have $w^{-1}x\in\B_{G'}$ by Lemma \ref{KM}. By multiplying some elements in $G'$ to the right of $w$ we may assume $w^{-1}x\in\A'\subset\A$. Write $P\subset\A$ the discrete subset of points in $\A$ that are in the same $G$-orbit as $x$. Since $S$ acts on $\A$ by translating via a lattice, $P=\sqcup P_j$ decomposes into finitely many $S$-orbits. Moreover, either $P_j\cap\A'=\emptyset$ or $P_j\cap\A'$ is a single $S'$-orbit. In the latter case pick $w_j\in N_G(S)$ (resp. $w_j\in G$) so that $w_j^{-1}x\in P_j$. Then $W_x=\{w_j\,|\,P_j\cap A'\not=\emptyset\}$ will do.
\end{proof}\p

\p

\section{Algorithm for Shalika germs}\label{secmain}

Recall $F$ is a non-archimedean local field with residue field $k$. Denote by $|\cdot|$ the normalized norm on $F$ so that $|\pi_F|=q^{-1}$ where $\pi_F$ is any uniformizer and $q=\#k$. Fix $\gamma\in\Lg$ non-nilpotent and put $d=\depth(\gamma)$. Based on DeBacker's works \cite{De02a,De02b} we shall compute the Shalika germs at $\gamma$. We adapt the notations in Lemma \ref{JorDec} and the paragraph after it.\p

\begin{definition} We say a function $f\in C_c^{\infty}(\Lg)$ is very smooth of depth $d$ if there exists some $y\in\B_G$ such that $\text{supp}(f)\subset\Lg_{y,r}$ and $f$ is locally constant by $\Lg_{y,r+}$.\p
\end{definition}

\begin{definition} We say a finite set of functions $\{f_i\}_{i\in I}\subset C_c^{\infty}(\Lg)$ {\bf separates nilpotent orbits} if all $f_i$ satisfy (\ref{Shalika}), and the vectors $\{I(\CO,f_i)\}_{\CO\in\CO^G(0)}$ span $\mathbb{C}^{\CO^G(0)}$.\p
\end{definition}

%The following key lemma to this article is a corollary of the works of DeBacker in \cite{De02a} and \cite{De02b}.\p

\begin{lemma}\label{DeBacker} For any $d\in\mathbb{R}$, there exists a finite set of functions, all very smooth of depth $d$, which separates nilpotent orbits.
\end{lemma}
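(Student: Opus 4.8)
The plan is to check the two requirements in the definition of ``separates nilpotent orbits'' in turn. The first one --- that each function satisfy \eqref{Shalika} --- is not really a condition on the individual functions but on their depth: by the range of validity of the homogeneity expansion established in \cite{De02a}, \eqref{Shalika} holds for \emph{any} $f$ very smooth of depth $d$ as soon as $\depth(\gamma)\ge d$. So the content of the lemma is the second requirement, and it is equivalent to the statement that the linear map
\[
f\longmapsto \big(I^G(\CO,f)\big)_{\CO\in\CO^G(0)}
\]
on the span of the very smooth functions of depth $d$ is surjective onto $\mathbb C^{\CO^G(0)}$ --- equivalently, that the nilpotent orbital integrals stay linearly independent after restriction to that span. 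This independence is itself essentially one of DeBacker's homogeneity results, so I would first look to cite \cite{De02a} directly; but if a self-contained argument is wanted I would proceed as follows.

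For each $\CO\in\CO^G(0)$, pick by \cite[Thm.~5.6.1]{De02b} a nilpotent datum $(x,r,e)$ with associated orbit $\CO$, and set
\[
f_{\CO}:=\mathbf{1}_{\,e+\Lg_{x,r:r+,\le 1}+\Lg_{x,r+}\,}.
\]
Then $f_{\CO}$ is supported in $\Lg_{x,r}$ and is $\Lg_{x,r+}$-invariant, hence very smooth of depth $r$; and since its support is open in $\Lg$ (a union of cosets of the open lattice $\Lg_{x,r+}$), for every nilpotent orbit $\CO'$ the set $\CO'\cap\operatorname{supp}f_{\CO}$ is open in $\CO'$, so $I^G(\CO',f_{\CO})>0$ exactly when this intersection is nonempty. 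By the characterization of the associated orbit recalled just before Definition~\ref{assoorb}, that is exactly when $\mathrm{clos}(\CO')\supset\CO$. Hence, ordering $\CO^G(0)$ so that orbits larger in the closure order come first, the matrix $\big(I^G(\CO',f_{\CO})\big)_{\CO',\CO}$ is triangular with nonzero diagonal, so it is invertible and the $f_{\CO}$ separate nilpotent orbits.

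What remains is to bring all the functions to the prescribed depth $d$, whereas the data above have various depths $r$. First, composing $f_{\CO}$ with a dilation $X\mapsto tX$, $t\in F^{\times}$, turns the datum $(x,r,e)$ into $(x,\,r-\operatorname{val}(t),\,t^{-1}e)$ and, by the standard homogeneity of nilpotent orbital integrals under dilation, transforms the matrix $\big(I^G(\CO',f_{\CO})\big)$ by an invertible operation (scaling the rows and permuting the $\CO'$); applying it to the whole family preserves the separating property and shifts the depths by $\operatorname{val}(t)\in\Z$. Since depths of elements of $\Lg$ have bounded denominator, only finitely many classes in $\Q/\Z$ are relevant, and for each one I need a nilpotent datum realizing $\CO$ with depth in that class; this should be arranged by taking $x$ in a facet of suitable type, which does not change the associated orbit. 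This last point --- realizing a prescribed nilpotent orbit by a nilpotent datum of prescribed denominator --- is the step I expect to be the real obstacle; everything else is bookkeeping. Alternatively one can sidestep the explicit functions entirely and simply invoke the linear independence of nilpotent orbital integrals on the spaces $C_c^{\infty}(\Lg_{y,d}/\Lg_{y,d+})$ from \cite{De02a}.
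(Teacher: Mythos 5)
Your construction is the same as the paper's (the functions $\mathbf{1}_{e+\Lg_{x,d:d+,\le 1}+\Lg_{x,d+}}$ attached to nilpotent data realizing each orbit, and the triangularity of $\{I^G(\CO',f_{\CO})\}$ via the characterizing property of associated orbits), but your first step contains a genuine error, and it is exactly at the point where the paper's proof has its one nontrivial observation. It is \emph{not} true that \eqref{Shalika} holds at every $\gamma$ with $\depth(\gamma)\ge d$ for every very smooth function of depth $d$: already for $G=SL_2$, $d=0$, $x$ hyperspecial and $f$ the characteristic function of $\tilde{X}+\Lg_{x,0+}$ with $\tilde{X}$ a unit diagonal element, every element of $\operatorname{supp}(f)$ has non-nilpotent image in $\Lg_{x,0:0+}$, so $I^G(\CO,f)=0$ for all $\CO\in\CO^G(0)$, while $I^G(\tilde{X},f)>0$ and $\depth(\tilde{X})=0$. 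The sharp range of validity coming from \cite[Thm.~2.1.5]{De02a} at an element of depth exactly $d$ (which is the case actually used in Section \ref{secmain}, where $d=\depth(\gamma)$) requires invariance under a full level-$d$ lattice $\Lg_{y,d}$ for some $y$, not merely under $\Lg_{y,d+}$; invariance at level $d+$ only covers elements of depth strictly greater than $d$. So the first requirement in ``separates nilpotent orbits'' is a real condition on the specific functions, and the step you skipped is the paper's key point: for $f_{\CO}=\mathbf{1}_{e+\Lg_{x,d:d+,\le1}+\Lg_{x,d+}}$ the invariance lattice $\Lg_{x,d:d+,\le1}+\Lg_{x,d+}$ contains $\Lg_{x+\epsilon\lambda,d}$ for small $\epsilon>0$, where $\lambda$ is the associated cocharacter of $(x,d,e)$ (Definition \ref{assococh}), and only this extra invariance puts $f_{\CO}$ within reach of \cite[Thm.~2.1.5]{De02a}. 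Your fallback (``invoke the linear independence of nilpotent orbital integrals on $C_c^{\infty}(\Lg_{y,d}/\Lg_{y,d+})$'') begs the same question, since what must be shown is precisely that the expansion/independence survives restriction to these particular functions at depth exactly $d$.

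By contrast, the step you single out as ``the real obstacle'' is not one: DeBacker's parametrization \cite[Thm.~5.6.1]{De02b} is proved depth by depth, so for the prescribed $d$ every nilpotent orbit is already the associated orbit of a nilpotent datum $(x,d,e)$ at that very depth; this is how the paper uses it, and no dilation bookkeeping or choice of facet type is needed. (As you set it up, the dilation argument could in any case only shift depths by integers, so by itself it cannot reach an arbitrary prescribed $d$.)
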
\qp

\begin{proof} Any nilpotent orbit $\CO\in\CO^G(0)$ is the associated orbit (Definition \ref{assoorb}) of some nilpotent datum $(x,d,e)$. In particular we have $e+\Lg_{x,d:d+,\le 1}+\Lg_{x,d+}$ meets a nilpotent orbit $\CO'\in\CO^G(0)$ iff $\CO\subset\text{clos}(\CO')$. The function $f_i$ are then taken to be the characteristic function of $e+\Lg_{x,d:d+,\le 1}+\Lg_{x,d+}$, If these $f_i$ satisfy (\ref{Shalika}), we'll have $\{I(\CO,f_i)\}_{\CO,\CO_i\in\CO^G(0)}$ as a strictly upper triangular matrix with non-vanishing diagonal, and thus the column vectors span $\mathbb{C}^{\CO^G(0)}$.\p

It remains to show that the characteristic function of $e+\Lg_{x,d:d+,\le 1}+\Lg_{x,d+}$ satisfies (\ref{Shalika}). One observes that $\Lg_{x,d:d+,\le 1}+\Lg_{x,d+}\supset\Lg_{x+\epsilon\lambda,d}$ for very small $\epsilon>0$ and the associated cocharacter $\lambda$ (Definition \ref{assococh}). By \cite[Thm. 2.1.5]{De02a}, these functions do satisfy (\ref{Shalika}).
\end{proof}\p

We now take the finite set of functions $\{f_i\}_{i\in I}\subset C_c^{\infty}(\Lg)$ constructed in the proof of Lemma \ref{DeBacker} so that each $f_i$ is supported on $\Lg_{x_i,d}$ and locally constant by $\Lg_{x_i,d+}$ for some $x_i\in\B_G$. For each $x_i$, we fix a choice of $W_{x_i}$ from Lemma \ref{finite} where all different $w\in W_{x_i}$ are in different double cosets in $G_{x_i}\backslash G/G'$.\p

To compute the Shalika germs at $\gamma$, it suffices to compute the (normalized) orbital integral $I^G(\gamma,f_i)$; the rest are ``combinatorial numbers'' from $I^G(\CO,f_i)$. We have either the $G$-orbit of $x_i$ does not meet $\B_{G'}$, in which case by Lemma \ref{KM} we have $I^G(\gamma,f_i)=0$, or by conjugating $f_i$ we may assume $x_i\in\B_G$. Now by Lemma \ref{finite} we have

\[I^G(\gamma,f_i)=\sum_{w\in W_{x_i}}\frac{|D^G(\gamma)|^{1/2}\cdot|G_{x,0+}|}{|G'_{w^{-1}x_i}|}\sum_{\dot{g}\in G_{x_i}/G_{x_i,0+}}\int_{G'}f_i(\text{ad}(\dot{g}w)\gamma_0+\text{ad}(\dot{g}wg)(\gamma'))dg\]
\vskip -.2cm
\[=\sum_{w\in W_{x_i}}\frac{|D^G(\gamma)|^{1/2}\cdot|G_{x_i,0+}|}{|D^{G'}(\gamma')|^{1/2}\cdot|G'_{w^{-1}x_i}|}I^{G'}(\gamma',\tilde{f}_{i,w,\gamma_0}').
\]
where $\tilde{f}_{i,w,\gamma_0}':\Lg'\rightarrow\mathbb{C}$ is given by \[\tilde{f}_{i,w,\gamma_0}'(\eta)=\sum_{\dot{g}\in G_{x_i,0:0+}}f_i(\text{ad}(\dot{g}w)(\gamma_0+\eta)).\]

Note $\tilde{f}_{i,w,\gamma_0}'$ is locally constant by $\Lg_{w^{-1}x_i,d+}\cap\Lg'=\Lg'_{w^{-1}x_i,d+}\supset\Lg'_{w^{-1}x_i,\depth(\gamma')}$. In particular by \cite[Thm 2.1.5]{De02a} the Shalika germ expansion at $\gamma'\in\Lg'$ holds for $\tilde{f}_{i,w,\gamma_0}'$, i.e.

\begin{equation}\label{homoeq}I^{G'}(\gamma',\tilde{f}_{i,w,\gamma_0}')=\sum_{\CO'\in\CO^{G'}(0)}\Gamma_{\CO'}(\gamma')I^{G'}(\CO',\tilde{f}_{i,w,\gamma_0}').
\end{equation}\hp

We treat $\Gamma_{\CO'}(\gamma')$ as a black box. As for $I^{G'}(\CO',\tilde{f}_{i,w,\gamma_0}')$, we have
\[I^{G'}(\CO',\tilde{f}_{i,w,\gamma_0}')=\sum_{\dot{g}\in G_{x_i}/G_{x_i,0+}}\int_{\CO'}f_i(\ad(\dot{g}w)(\gamma_0+\eta))d\eta\]
\vskip-.2cm
\[=\sum_{\dot{g}\in G_{x_i}/G_{x_i,0+}}\sum_{\delta\in\Lg_{x_i,d:d+}}f_i(\text{ad}(\dot{g})(\ad(w)\gamma_0+\delta))\int_{\ad(w)\CO'}1_{\delta+\Lg_{x,d+}}(\eta)d\eta.
\]\p

Write $n_w(\delta):=\int_{\ad(w)\CO'}1_{\delta+\Lg_{x,d+}}(\eta)d\eta$ the integral on the RHS of the last equation. They are combinatorial numbers (in terms of $q=\#k$) that can be computed using the method of Ranga Rao \cite{Ra72}. Note that $n_w(\delta)=0$ unless $\delta\in\Lg_{x,d:d+}$ is nilpotent and commutes with the image of $\ad(w)\gamma_0$ in $\Lg_{x_i,d:d+}\subset\bar{\Lg}$ in the sense of Lemma \ref{RY}. There are only finitely many orbits of such $\delta$, and the number of $\delta$ in each such orbit is also a combinatorial number. We then arrive at a sum of the form
\begin{equation}\label{overdelta}
\sum_{\delta}\sum_{\dot{g}\in G_{x_i}/G_{x_i,0+}}\til{n}_w(\delta)f_i(\text{ad}(\dot{g})(\ad(w)\gamma_0+\delta)),
\end{equation}
where $\delta$ runs over representatives of these orbits and $\til{n}_w(\delta)$ is $n_w(\delta)$ times the number of element in the orbit. Recall in the proof of Lemma \ref{DeBacker}, $f_i$ is the characteristic function of some subset of $\Lg_{x_i,d:d+}$. We are thus counting how many $\dot{g}\in G_{x_i}/G_{x_i,0+}$ can conjugate $\ad(w)\gamma_0+\delta$ into $\text{supp}(f_i)$. This counts the number of $k$-points on some variety over $k$ which we'll write out from the definition of $f_i$ in Section \ref{secgeom}.\p

%The quantity left to be understood is
%\[\sum_{\dot{g}\in G_{x_i}/G_{x_i,0+}}\sum_{Z\in\Lg_{x,r:r+}}f_i(\text{ad}(\dot{g})(\ad(w)X_0+Z))n_w(Z).\]

\begin{remark} We collect the ``combinatorial numbers'' that appear. To begin with there is the matrix $\{I^G(\CO,f_i)\}$. Recall that in the construction of Lemma \ref{DeBacker} each $f_i$ is associated to some $\CO_i\in\CO^G(0)$. We should think of $\{I^G(\CO,f_i)\}$ as an (blockwise) upper triangular matrix; in order to derive $\Gamma_{\CO}(\gamma)$ from $I^G(\gamma,f_i)$ we need to invert this matrix. Other combinatorial numbers are: the numbers $\til{n}_w(\delta)$, the ratio $|D^G(\gamma)|^{1/2}/|D^{G'}(\gamma')|^{1/2}$, the measure of $G_{x_i,0+}$ and $G_{w^{-1}x_i}'$, and possibly orders of subgroups of $G_{x_i,0:0+}$. In a sense the sum over different $w\in W_{x_i}$ and the sum over $\delta$ in (\ref{overdelta}) is also combinatorial.\hp

In any case these are ``finite.'' There are only finitely many possible twisted Levi subgroups of $G$ up to conjugation, and finitely many possible depths $d$ modulo $2\Z$, while nilpotent orbital integrals have homogeneity property under $d\mapsto d+2$ and $\Lg_{x,d:d+}\cong\Lg_{x,d+1:(d+1)+}$.\p
\end{remark}

\p
\section{The template}\label{secgeom}

In this section only, $G$ is a connected reductive group over $k$. Let $\theta$ be an order $m$ automorphism of $G\otimes_k\bar{k}$. We assume $\mathrm{char}(k)\gg1$ relative to $G$ and $\mathrm{char}(k)\nmid m$. Denote by $G^{(0)}:=(G^{\theta})^o$ and $\Lg^{(1)}:=\Lg^{(\theta=\zeta_m^n)}$ for a fixed $m$-root of unity $\zeta_m\in\bar{k}^{\times}$. We suppose that $\Lg^{(1)}$ is defined over $k$.\p

Fix $\gamma\in\Lg^{(1)}$. Let $\lambda:\mathbb{G}_m\ra G^{(0)}$ be the associated cocharacter (Definition \ref{assococh}, with the role of $\bar{G}$ replaced by $G$, $\sG$ replaced by $G^{(0)}$ and $\Lg_{x,d:d+}$ replaced by $\Lg^{(1)}$). Let
$G^{(0)}_{\le 0}\subset G^{(0)}$ be the parabolic subgroup whose Lie algebra consists of non-positive weight subspaces for $\lambda$ and $U^{(0)}$ its unipotent radical. Let $\Lg_{\le 1}^{(1)}$ (resp. $\Lg_{\le 2}^{(1)}$) be the sum of all the weight spaces with weight $\le 1$ (resp. $\le2$) in $\Lg^{(1)}$ with respect to $\lambda$. By the same proof of \cite[4.14]{SS70} the $G^{(0)}_{\le 0}$-image of $e+\Lg_{\le 1}$ is dense in $\Lg_{\le 2}^{(1)}/\Lg_{\le 1}^{(1)}$. Let $E\subset G^{(0)}_{\le 0}$ be the subgroup that stabilizes $e+\Lg_{\le 1}^{(1)}\subset\Lg_{\le 2}^{(1)}$ and $E^o$ its identity component. Define
\[\hat{\CH}_e(\gamma):=\{g\in G^{(0)}/E^o\,|\,\ad(g^{-1})(\gamma)\in e+\Lg_{\le 1}^{(1)}\}.\]\qp

This is our template, it has a natural quasi-finite map of generic degree $|\pi_0(E)|$ to the projective variety
$\CH_e(\gamma):=\{g\in G^{(0)}/G^{(0)}_{\le 0}\,|\,\ad(g^{-1})(\gamma)\in\Lg_{\le 2}^{(1)}\}$. As $\lambda$ is unique up to appropriate conjugation, $\hat{\CH}_e(X)$ and $\CH_e(\gamma)$ depends only on $e$ but not on the choice of $\lambda$. \p

\begin{theorem}\label{var} All varieties that appear in the algorithm of the previous section can be described as $\hat{\mathcal{H}}_e(\gamma)$, where $(G,\theta)$ appears as $(\bar{G},\theta)$ in Lemma \ref{RY} and $\gamma$ is not nilpotent.
\end{theorem}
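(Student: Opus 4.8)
The plan is to trace back through the algorithm of Section \ref{secmain} and identify precisely which varieties get counted, then match them term-by-term with the definition of $\hat{\CH}_e(\gamma)$. Recall that the only place a $k$-point count enters is in the sum (\ref{overdelta}): for a fixed $f_i$, a fixed $w\in W_{x_i}$, and a fixed nilpotent representative $\delta\in\Lg_{x_i,d:d+}$ commuting with the image $\bar\gamma_0$ of $\ad(w)\gamma_0$, we must count the cosets $\dot g\in G_{x_i}/G_{x_i,0+}$ with $\ad(\dot g)(\bar\gamma_0+\delta)\in\mathrm{supp}(f_i)$. By the construction in Lemma \ref{DeBacker}, $f_i$ is the characteristic function of $e_i+\Lg_{x_i,d:d+,\le 1}+\Lg_{x_i,d+}$ for a nilpotent datum $(x_i,d,e_i)$ with associated cocharacter $\lambda_i$. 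Passing to the reductive quotient via Lemma \ref{RY}, all of this takes place inside $\bar\Lg^{(n)}$ for $(\bar G,\theta)$ the group-with-automorphism attached to $x_i$; here $G_{x_i,0:0+}=\sG_{x_i}(k)=\bar G^\theta(k)$, the subspace $\Lg_{x_i,d:d+,\le 1}$ becomes $\bar\Lg^{(n)}_{\le 1}$ with respect to $\lambda_i$ viewed in $\bar G^{(0)}$, and the condition ``$\ad(\dot g)(\bar\gamma_0+\delta)\in e_i+\bar\Lg^{(n)}_{\le 1}$'' is exactly membership in the fiber over a $k$-point of the set defining $\CH_{e_i}$. (One should also note $G_{x_i}/G_{x_i,0+}$ differs from $G_{x_i,0:0+}$ only by the component group of $\sG_{x_i}$, a harmless finite factor that can be absorbed into the combinatorial bookkeeping, or handled by working with $\sG_{x_i}$ in place of its identity component as Lemma \ref{RY} already allows.)

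The next step is to recognize the element being conjugated, $\bar\gamma_0+\delta$, as the ``$\gamma$'' in the statement. The element $\gamma_0$ is good of depth $d$, so its image in $\Lg_{x_i,d:d+}$ is a nonzero semisimple element of $\bar\Lg^{(n)}$; adding the nilpotent $\delta$ which commutes with it gives a non-nilpotent element of $\bar\Lg^{(n)}$ (its semisimple part $\bar\gamma_0$ is nonzero, so its $\bar G^\theta$-orbit is not contained in the nilpotent cone). This is the required non-nilpotency hypothesis on $\gamma$ in Theorem \ref{var}. Thus with $G\leftarrow\bar G$, $\theta\leftarrow\theta$, $\gamma\leftarrow\bar\gamma_0+\delta$, $e\leftarrow e_i$, and $\lambda\leftarrow\lambda_i$, the count we need is exactly $\#\hat{\CH}_e(\gamma)(k)$ up to the generic-degree $|\pi_0(E)|$ factor and the finite component-group factor, both of which are combinatorial. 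It remains to check that $\lambda_i$ really is ``the'' associated cocharacter of $e_i$ in the sense of Definition \ref{assococh} applied inside $\bar\Lg^{(n)}$ — but this is built into the proof of Lemma \ref{DeBacker}, where $\lambda_i$ is chosen so that $d\lambda_i(1)=h_i$ and $e_i$ has weight $2$, which is precisely that definition; and the final sentence before Theorem \ref{var} records that $\hat{\CH}_{e}(\gamma)$ is independent of the choice of $\lambda$.

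I expect the main obstacle to be purely bookkeeping rather than conceptual: one must carefully confirm that the \emph{template} $\hat{\CH}_e(\gamma)$ — defined as a subvariety of $G^{(0)}/E^o$ with the condition $\ad(g^{-1})\gamma\in e+\Lg^{(1)}_{\le 1}$ — genuinely records the count of cosets in (\ref{overdelta}), including getting the left-versus-right action and the $g$ versus $g^{-1}$ conventions consistent, and checking that quotienting by $E^o$ (the identity component of the stabilizer of $e+\Lg^{(1)}_{\le 1}$ inside $G^{(0)}_{\le 0}$) rather than by $G^{(0)}_{x_i,0+}$-type groups gives the same point count up to the advertised finite factors. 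A secondary point to verify is that as $(f_i,w,\delta)$ range over the finite index set of the algorithm, the resulting data $(\bar G,\theta,\bar\gamma_0+\delta,e_i)$ always satisfies the running hypotheses of Section \ref{secgeom} (in particular $\mathrm{char}(k)\nmid m$ and $\Lg^{(1)}$ defined over $k$), which follows since these are inherited verbatim from Lemma \ref{RY} applied at $x_i$. Modulo these checks the theorem is essentially a restatement of the computation already carried out, so the proof should be short.
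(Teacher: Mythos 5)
Your proposal is correct and follows essentially the same route as the paper: both arguments are dictionaries identifying $(\bar{G},\theta)$ from Lemma \ref{RY} at $x_i$, $e=e_i$ with $\mathrm{supp}(f_i)=e_i+\Lg_{x_i,d:d+,\le 1}+\Lg_{x_i,d+}$, $\gamma=\ad(w)\gamma_0+\delta$ (non-nilpotent since its semisimple part is the nonzero image of the good element $\gamma_0$), and $\lambda$ the associated cocharacter, so that the point count in (\ref{overdelta}) becomes $\#\hat{\CH}_{e}(\gamma)(k)$ up to combinatorial factors. The one check you defer—passing from the coset count over $G_{x_i}/G_{x_i,0+}$ to points of $\hat{\CH}_{e}(\gamma)$—is handled in the paper by Lang's theorem, $G^{(0)}(k)/E^o(k)\cong(G^{(0)}/E^o)(k)$ (which is exactly why $E^o$ rather than $E$ appears in the definition), the relevant finite factor being $|E^o(k)|$, taken as a combinatorial black box, rather than the $|\pi_0(E)|$ and component-group factors you cite.
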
\hp

\begin{proof} Comparing with the terminology of the previous section, $m$ is the smallest denominator of $d$. By taking suitable power of $\theta$ we have $\Lg^{(1)}\cong\Lg_{x,d:d+}$ (the latter in the notation of the previous section). The $e+\Lg_{\le 1}^{(1)}$ here was $e+\Lg_{x,d:d+,\le 1}$ in the previous section, which was the support of $f_i$ (see the proof of Lemma \ref{DeBacker}). Our $\gamma$ was $\ad(w)\gamma_0+\delta$ and our $\lambda$ was the $\lambda$ in Definition \ref{assococh} and Lemma \ref{DeBacker}. We take $|E^o(k)|$ as a combinatorial black box. Finally, Lang's theorem implies $G^{(0)}(k)/E^o(k)\cong (G^{(0)}/E^o)(k)$; this is why we need $E^o$ instead of $E$.
\end{proof}\hp

\begin{lemma}\label{smooth} For any $\gamma\in\Lg^{(1)}$, the variety $\hat{\CH}_e(\gamma)$ is smooth.
\end{lemma}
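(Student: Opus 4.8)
The plan is to exhibit $\hat{\CH}_e(\gamma)$ as an open subvariety of a homogeneous-type space on which smoothness is transparent, and then to argue smoothness by a transversality/submersion computation. Concretely, consider the map $\pi:G^{(0)}\to\Lg^{(1)}_{\le 2}/\Lg^{(1)}_{\le 1}$ given by $g\mapsto \ad(g^{-1})(\gamma)\bmod\Lg^{(1)}_{\le 1}$, or rather its descent to $G^{(0)}/E^o$. First I would note that $\hat{\CH}_e(\gamma)$ is precisely the fiber of (a version of) this map over the point $e+\Lg^{(1)}_{\le 1}$; since $G^{(0)}$ is smooth, it suffices to show that $\pi$ is a smooth morphism at every point of this fiber, i.e. that its differential is surjective there. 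Equivalently, working on $G^{(0)}$ itself before quotienting, I would show that for $g$ with $\ad(g^{-1})\gamma\in e+\Lg^{(1)}_{\le 1}$, the differential of $\pi$ at $g$ surjects onto $\Lg^{(1)}_{\le 2}/\Lg^{(1)}_{\le 1}$, and that the kernel direction is exactly the tangent space to the $E^o$-coset, so that the quotient $G^{(0)}/E^o$ is smooth of the expected dimension at $gE^o$.

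The key computation: translating to $g=1$ (replacing $\gamma$ by $\ad(g^{-1})\gamma=:\gamma_1\in e+\Lg^{(1)}_{\le 1}$), the differential of $\pi$ at the identity sends $X\in\Lg^{(0)}$ to $[\gamma_1,X]\bmod\Lg^{(1)}_{\le 1}=[e,X]\bmod\Lg^{(1)}_{\le 1}$, because $[\Lg^{(1)}_{\le 1},\Lg^{(0)}]\subset\Lg^{(1)}_{\le 1}$ (weights are non-positive on $\Lg^{(0)}$). So the image of the differential is $[e,\Lg^{(0)}]+\Lg^{(1)}_{\le 1}$ modulo $\Lg^{(1)}_{\le 1}$. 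Now I invoke the $\Lsl$-theory from the paragraph after Lemma~\ref{RY}: complete $e$ to an $\mathfrak{sl}_2$-triple $(e,h,f)$ with $h=d\lambda(1)$, so $\ad(e):\Lg^{(1)}_{\le 1}\to\Lg^{(1)}_{\le 3}$ restricted to weight spaces is injective on weights $\le -1$ and, by $\mathfrak{sl}_2$-representation theory, $\ad(e)$ surjects onto weight-$\ge 1$ parts; in particular $[e,\Lg^{(0)}]$ covers the weight-$2$ part of $\Lg^{(1)}$ modulo lower weights — more precisely $[e,\Lg^{(0)}_{\ge -1}]$ surjects onto $\Lg^{(1)}_2/\Lg^{(1)}_{\le 1}=\Lg^{(1)}_{\le 2}/\Lg^{(1)}_{\le 1}$. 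This gives surjectivity of $d\pi$, hence $\pi$ is smooth along the fiber, hence the fiber in $G^{(0)}$ is smooth; and the $E^o$-action is free with smooth orbits, so by Lang's theorem / the fact that $E^o$ is smooth connected the quotient $\hat{\CH}_e(\gamma)=(\text{fiber})/E^o$ is smooth.

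I expect the main obstacle to be the bookkeeping with weights: one must be careful that the relevant $\mathfrak{sl}_2$-surjectivity statement ($\ad(e)$ from weight $\ge -1$ onto weight $\ge 1$) holds inside the \emph{graded} piece $\Lg^{(1)}$ rather than all of $\Lg$, which is exactly what Vinberg/$\theta$-group theory (and the hypothesis $\mathrm{char}(k)\gg 1$, $\mathrm{char}(k)\nmid m$) guarantees, and that the identification of $\ker d\pi$ with $\Lie E^o$ is genuine — i.e. that $E$ is \emph{smooth} (this is where large characteristic enters: the stabilizer $E$ of $e+\Lg^{(1)}_{\le 1}$ is reduced, so $\dim E=\dim\Lie E$ and the Lie-algebra computation $\Lie E=\{X\in\Lg^{(0)}_{\le 0}:[X,\gamma_1]\in\Lg^{(1)}_{\le 1}\}$ matches the scheme-theoretic stabilizer). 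Once smoothness of $E$ and the $\mathfrak{sl}_2$-surjectivity are in hand, the rest is a routine dimension count: $\dim\hat{\CH}_e(\gamma)=\dim G^{(0)}-\dim E^o-\dim(\Lg^{(1)}_{\le 2}/\Lg^{(1)}_{\le 1})$ at every point of the fiber, which is constant, so $\hat{\CH}_e(\gamma)$ is smooth (and equidimensional).
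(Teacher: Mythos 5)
Your overall strategy is the same as the paper's: exhibit $\hat{\CH}_e(\gamma)$ inside the smooth ambient space $G^{(0)}/E^o$ as the locus where $\ad(g^{-1})\gamma \bmod \Lg^{(1)}_{\le 1}$ equals $e$, and prove smoothness by showing the differential of this condition is surjective at every point of the fiber, i.e.\ that $[\Lg^{(0)},\gamma_1]$ meets $\Lg^{(1)}_{\le 1}$ transversally for $\gamma_1=\ad(g^{-1})\gamma\in e+\Lg^{(1)}_{\le 1}$. However, your key computation rests on a false assertion: you claim $[\Lg^{(1)}_{\le 1},\Lg^{(0)}]\subset\Lg^{(1)}_{\le 1}$ ``because weights are non-positive on $\Lg^{(0)}$.'' The cocharacter $\lambda$ maps into $G^{(0)}$, so $\Lg^{(0)}$ carries $\lambda$-weights of both signs; only $\Lie G^{(0)}_{\le 0}$ (the parabolic containing $E$) is confined to non-positive weights. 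Hence for general $X\in\Lg^{(0)}$ of weight $i>0$, the bracket of $X$ with the weight-$\le 1$ part of $\gamma_1-e$ can land in weights $2,\dots,i+1$, and your identification $[\gamma_1,X]\equiv[e,X]\pmod{\Lg^{(1)}_{\le 1}}$, and with it the claim that the image of $d\pi$ is $[e,\Lg^{(0)}]+\Lg^{(1)}_{\le 1}$, is unjustified as stated.

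The conclusion you want is nevertheless true, and the repair is exactly the content of the paper's one-line argument: it suffices to use $X\in\Lg^{(0)}_{\ge 0}$, where $\ad(e):\Lg^{(0)}_i\to\Lg^{(1)}_{i+2}$ is surjective for $i\ge 0$ by $\Lsl$-theory, and then absorb the error terms $[\gamma_1-e,X]$ --- which have weight strictly smaller than that of $[e,X]$ --- by an increasing induction on the weight: first produce $\Lg^{(1)}_2$ modulo $\Lg^{(1)}_{\le 1}$, then $\Lg^{(1)}_3$ modulo what is already obtained, and so on, giving $[\Lg^{(0)}_{\ge0},\gamma_1]+\Lg^{(1)}_{\le 1}=\Lg^{(1)}$. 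Two further slips to correct: the condition $\ad(g^{-1})\gamma\in e+\Lg^{(1)}_{\le 1}$ lives in $\Lg^{(1)}/\Lg^{(1)}_{\le 1}$, so the codimension is $\dim\Lg^{(1)}_{\ge 2}$, not $\dim(\Lg^{(1)}_{\le 2}/\Lg^{(1)}_{\le 1})$; and your map $\pi$ does not descend to $G^{(0)}/E^o$ away from the fiber (right translation by $E$ only preserves the condition on the fiber itself), nor is $\ker d\pi$ equal to $\Lie E^o$ in general --- neither is needed: show the preimage in $G^{(0)}$ is smooth and then quotient by the free $E^o$-action, or, as the paper does, argue directly that the equations impose independent conditions on tangent spaces inside the smooth $G^{(0)}/E^o$, which also sidesteps any question about smoothness of $E$.
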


\begin{proof} The quotient $G^{(0)}/E^o$ is obviously smooth, thus it suffices to show that $\hat{\CH}_e(\gamma)$ is a local complete intersection in it. In other words, we want to prove that $\ad(g^{-1})\gamma(\gamma)\in e+\Lg_{\le1}^{(1)}$ impose independent conditions on tangent space. This can be achieved by showing that $[\Lg^{(0)},\gamma]$ intersects with $\Lg_{\le 1}^{(1)}$ transversally. But they do; by properties of $\mathfrak{sl}_2$-triples we have $[\Lg^{(0)},\gamma]\supset[\Lg_{\ge 0}^{(0)},\gamma]=\Lg_{\ge 2}^{(1)}$.
\end{proof}\hp

\begin{remark} This also shows that the image of $\hat{\CH}_e(\gamma)$ in $\CH_e(\gamma)$ is smooth, which is not the case for $\CH_e(\gamma)$. In fact, the varieties $\CH_e(\gamma)$ are examples of Hessenberg varieties of Goresky, Kottwitz and MacPherson \cite{GKM06} except that our varieties are ``singular'' in two ways: our $\gamma$ might no longer be regular semisimple, and consequently $\CH_e(\gamma)$ may not be smooth. Note that $\hat{\mathcal{H}}_{e}(\gamma)$ arises as a family, where $\gamma$ varies in appropriate strata in $\Lg^{(1)}$.
\end{remark}\hp

\begin{remark} Since $\gamma\in\Lg^{(1)}$ is not nilpotent, such representations $G^{(0)}\curvearrowright\Lg^{(1)}$ are classified in \cite{Le09}, \cite{Le13} and \cite{RLYG12}.
\end{remark}\hp

\begin{remark} Hales \cite{Ha94} had essentially found (see the next remark), from the study of stable subregular Shalika germs of classical groups, some $\hat{\CH}_e(\gamma)$ of Hessenberg varieties that admit maps to hyperelliptic curves. In \cite[Sec. 3]{Ts15c} the author identified more examples of $\hat{\CH}_e(\gamma)$ as certain quasi-finite covers of symmetric powers of hyperelliptic curves. \end{remark}\hp

\begin{remark} The stabilizer of $\gamma$ in $G^{(0)}$ acts naturally on $\hat{\CH}_e(\gamma)$. Those representations for which this stabilizer is finite for regular semisimple $\gamma$ often arise in recent advances on arithmetic statistics initiated by Bhargava. In these studies, $\hat{\CH}_e(\gamma)$ or $\CH_e(\gamma)$ usually appear with different descriptions. For example, Thorne \cite{Th13} studied the case $m=2$ for $G$ of type {\bf ADE} and $\theta$ such that $\gamma$ has finite stabilizer in $G^{(0)}$ when $\gamma$ is regular semisimple. One of his result is that when $e$ is subregular nilpotent, he described (in a different setup) the quotient of $\hat{\CH}_e(\gamma)$ by the finite stabilizer as specific plane curve.\p

Following the method of this article, the point-counting of such curves appears in the stable subregular Shalika germs, essentially recovering Hales' result in type {\bf A} and {\bf D}, and imply that in type ${\bf E}$ the stable subregular Shalika germs depends on point-counting of non-hyperelliptic curves of genus $3$ or $4$ over $k$. Fintzen, Li, Shankar and myself had also carried out similar computation for types {\bf B}, {\bf C}, {\bf F}$_4$ and {\bf G}$_2$, along the way recovering Hales' other results on stable subregular Shalika germs.
\end{remark}\hp

There has been the conjecture that orbital integrals for classical groups might only have to do with hyperelliptic curves. The following conjecture, which arose from a discussion with Zhiwei Yun, suggests a precise formulation.\hp

\begin{conjecture}
For $\hat{\CH}_{e}(\gamma)$ as in this section with $G/_{\bar{k}}$ having only isogenous factors of type {\bf A}, {\bf B}, {\bf C}, {\bf D} and {\bf G}$_2$, the (semi-simplification of) $\bar{\Q}_{\ell}[\text{Gal}(\bar{k}/k)]$-modules $H^*_c(\hat{\mathcal{H}}_{e}(\gamma)\otimes\bar{k}\,,\bar{\Q}_{\ell})$ are isomorphic to the direct sum of some submodules of certain tensor products of $H^1_c(\cdot\,,\bar{\Q}_{\ell})$ of hyperelliptic curves and $H^0_c(\cdot\,,\bar{\Q}_{\ell})$ of finite \'{e}tale schemes.
\end{conjecture}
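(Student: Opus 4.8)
\emph{Proof proposal.} The plan is to bootstrap from the inductive structure of this paper down to a base case, and then to settle the base case using the classification of classical-type $\theta$-groups together with the hyperelliptic spectral curves already visible in the subregular (arithmetic-statistics) setting. Let $\mathcal{C}$ be the class of semisimple $\bar{\Q}_{\ell}[\mathrm{Gal}(\bar{k}/k)]$-modules described in the conjecture, which it is natural --- and, I believe, intended --- to close also under Tate twists, since affine-space fibrations contribute such twists. The one formal input is that $\mathcal{C}$ is closed under direct sums, subquotients, extensions (irrelevant after semisimplification), and tensoring by $H^*_c$ of any finite \'{e}tale scheme or iterated affine-space fibration.

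\emph{Step one: reduce to $\gamma$ regular semisimple in $\Lg^{(1)}$.} Write $\gamma=\gamma_s+\gamma_n$ for its Jordan decomposition. If $\gamma_s$ is not regular, set $Z:=Z_{G^{(0)}}(\gamma_s)$, a proper $\theta$-stable reductive subgroup carrying the induced grading. Mirroring the reductions of Sections \ref{secASF} and \ref{secmain} --- stratify $\hat{\CH}_e(\gamma)$ by the $G^{(0)}$-position of $\ad(g^{-1})\gamma_s$, and observe that over each stratum the residual condition on the nilpotent part is an $\hat{\CH}$-problem for $Z$ --- one finds that $\hat{\CH}_e(\gamma)$ stratifies into iterated smooth fibrations with affine (or finite-\'{e}tale-times-affine) fibers over varieties $\hat{\CH}_{e'}(\gamma_n)$ attached to $Z$. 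By Theorem \ref{var}, $Z$ occurs as $(\bar{G}',\theta')$ for a twisted Levi subgroup $G'$ of a classical-type $p$-adic group, hence is a product of Weil restrictions of $\mathrm{GL}_n$'s and a smaller classical group of the same type ($G_2$ admitting only type-$A$ proper twisted Levis), so the hypothesis of the conjecture is preserved. A Noetherian induction --- on the semisimple rank of the auxiliary group, with the rank-preserving steps controlled by the finiteness of depths modulo $2\Z$ noted at the end of Section \ref{secmain} --- then has inductive step exactly the closure property of $\mathcal{C}$ above, and reduces us to tori (where $\hat{\CH}_e(\gamma)$ is a point) and to $\gamma$ regular semisimple.

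\emph{Step two: the base case.} Run through the classification of the stable $\theta$-groups $G^{(0)}\curvearrowright\Lg^{(1)}$ carrying a non-nilpotent element \cite{Le09,Le13,RLYG12}, for $G$ of type $A$, $B$, $C$, $D$, $G_2$, and for each attach the cameral cover above the invariants of $\gamma$: the branched cover of $\mathbb{P}^1$ recording the eigenvalue data of a generic pencil through $\gamma$. For these types this cover is of hyperelliptic kind --- the hyperelliptic curves entering through the $\pm$-paired eigenvalue structure of classical-type (and $G_2$) representations --- matching the curves found by Hales \cite{Ha94}, in \cite{Ts15c}, and by Thorne and Bhargava--Gross in the subregular cases, so that the relevant abelian variety is a hyperelliptic Jacobian or a Prym thereof. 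One then shows that $\hat{\CH}_e(\gamma)$ is assembled by iterated affine-bundle and finite-\'{e}tale constructions pulled back from a symmetric power $\mathrm{Sym}^d C$ of this hyperelliptic curve $C$, with the finite-\'{e}tale parts (recording orderings of eigenvalues) contributing only $H^0_c$ of finite \'{e}tale schemes; since $H^*_c(\mathrm{Sym}^d C)$ is built out of $\bigwedge^{\le d}H^1_c(C)$, hence out of tensor powers of $H^1_c(C)$, this yields $H^*_c(\hat{\CH}_e(\gamma))\in\mathcal{C}$. An alternative for this step is to deform $\gamma$ to a generic regular semisimple element, where the picture is cleanest, and to propagate via a decomposition-theorem argument for the family $\hat{\CH}_e(-)$; the non-properness of $\hat{\CH}_e(\gamma)$ makes the direct fibration analysis more appealing, though.

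The main obstacle is Step two in full uniformity, and it has two faces. First, the available descriptions --- \cite{Ha94,Ts15c}, Thorne, Bhargava--Gross --- essentially treat only subregular $e$; covering every nilpotent $e$ means controlling the Hessenberg variety $\CH_e(\gamma)$ and its cover $\hat{\CH}_e(\gamma)$ for an arbitrary Hessenberg datum over a classical-type graded Lie algebra, for which I know no uniform statement, and which may itself require a d\'{e}vissage relating a general Hessenberg function to the regular and subregular ones. Second, for $m>2$ the cameral cover is a priori a cyclic cover of $\mathbb{P}^1$ rather than a double cover; proving that for types $A$, $B$, $C$, $D$, $G_2$ the cohomologically relevant piece nonetheless remains hyperelliptic-controlled --- with the $E$-types being exactly where this fails, as the text remarks --- seems to call for a case-by-case verification through the Levy--Reeder--Yu--Gross classification. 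A structural statement, say that the little Weyl group of a classical-type $\theta$-group always acts through a product of symmetric groups and a single hyperelliptic involution, would make this clean; absent it, I would fall back on the classification.
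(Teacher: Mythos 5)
The statement you are trying to prove is stated in the paper as a \emph{conjecture} (it ``arose from a discussion with Zhiwei Yun''); the paper offers no proof of it, so there is nothing for your argument to be measured against except its own completeness --- and it is not complete. Your Step two is not a reduction of the conjecture but essentially \emph{is} the conjecture: the claim that for every nilpotent $e$ and every order-$m$ grading of classical (or ${\bf G}_2$) type the variety $\hat{\CH}_e(\gamma)$ is assembled by affine-bundle and finite-\'{e}tale constructions from a symmetric power of a single hyperelliptic curve is exactly the uniform hyperelliptic control the conjecture asserts, and it is only known in the special cases you cite (subregular $e$ in \cite{Ha94}, Thorne's setting, and the examples of \cite{Ts15c}). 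You concede this yourself (``the main obstacle is Step two in full uniformity''), and for $m>2$ you concede further that the relevant covers are a priori cyclic rather than hyperelliptic and that you would ``fall back on the classification'' without carrying it out. A proof that ends by naming the open part is a strategy sketch, not a proof.

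Step one also has unproved ingredients. The stratification of $\hat{\CH}_e(\gamma)$ by the position of $\gamma_s$, with strata fibering over $\hat{\CH}$-varieties for the centralizer $Z_{G^{(0)}}(\gamma_s)$ with affine or finite-\'{e}tale-times-affine fibers, is asserted ``mirroring'' Theorem \ref{Amherst}; but Theorem \ref{Amherst} is proved for affine Springer fibers over $F=k((t))$, using Moy--Prasad filtrations, Lemma \ref{KM} and the depth of $\gamma$, none of which transfers verbatim to the finite-dimensional template of Section \ref{secgeom} --- a graded analogue would have to be formulated and proved, including the fibration statement (the analogue of Proposition \ref{fibra}) and the identification of the fibers. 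The appeal to Theorem \ref{var} to see that the centralizer is again of classical graded type is also a mis-citation: Theorem \ref{var} identifies which varieties occur in the Shalika-germ algorithm, not which graded pairs arise as centralizers; the fact you need (centralizers of semisimple elements of $\Lg^{(1)}$ in types ${\bf A}$--${\bf D}$, ${\bf G}_2$ have only classical isogeny factors, compatibly with the induced grading) is true but must be argued separately. Finally, you quietly enlarge the class $\mathcal{C}$ to be closed under Tate twists; this may well be the intended reading, but as written it changes the statement being proved, and the nilpotent elements $\gamma_n$ your induction produces land outside the regime (non-nilpotent $\gamma$) in which the conjecture and the cited classifications \cite{Le09,RLYG12} are formulated, so the base of your induction needs its own treatment (graded Springer-fiber-type varieties) rather than the cameral-cover analysis of Step two.
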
\hp

%The following idea benefits from discussion with Alexis Bouthier, Alexei Oblomkov and Zhiwei Yun. With the method in \cite[Sec. 2.3, 2.4]{De02a} and that of this work, the following is expected:\qp

%\begin{conjecture} For any regular semisimmple element in a reductive Lie algebra over $k((t))$ and any parahoric subgroup of the Lie group, the corresponding affine Springer fiber \cite{KL} can be stratified so that each strata is an iterated affine space fibration over $\hat{\mathcal{H}}_{e}(X)$ where $G$, $e$ and $X$ arises in the manner as described in the proof of Theorem \ref{var}. 
%\end{conjecture}\hp

%For example, in the case of $\text{SL}_2$ with split regular semisimple element, one can stratify the affine Springer fiber into $(\mathbb{A}^n-\mathbb{A}^{n-1})$'s for different $n$'s up to the dimension (a point when $n=0$). I am unsure whether for the affine Springer fiber we actually expect to need the variety $\hat{\mathcal{H}}_{e}(X)$, or merely its open image in $\mathcal{H}_{e}(X)$.\p

\p
\section{Formal applications}\label{secapp}

We go back to the notations of Section \ref{secBT} and \ref{secmain}, and continue to assume $\mathrm{char}(k)\gg1$.\p

\begin{corollary} Suppose $\gamma=\gamma_0+\gamma_1+...+\gamma_n+\gamma_{n+1}\in\Lg$ where $[\gamma_i,\gamma_j]=0$, each $\gamma_i$ are good of depth $r_i$ except for $\gamma_{n+1}$ which is nilpotent (could be zero). Here $r_i<r_j$ for $i<j$. Let $G_0:=Z_G(\gamma_0)$, $G_1:=Z_{G_0}(\gamma_1)$, ..., $G_n:=Z_{G_{n-1}}(\gamma_n)$, and $Z_i:=Z(G_i)$ the corresponding centers, so that $Z_0\subset Z_1\subset ...\subset Z_n$ are subtori of $G$. Let $\mathfrak{z}_i$ be the Lie algebra of $Z_i$ and $(\mathfrak{z}_i)_{r_i+}$ its depth-$(r_i+)$ part. Then we have 
\[\Gamma_{\CO}(\gamma+\eta)=\Gamma_{\CO}(\gamma),\;\forall\, \eta\in(\mathfrak{z}_0)_{r_0+}+...+(\mathfrak{z}_n)_{r_n+}.\]
\end{corollary}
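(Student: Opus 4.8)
The plan is to reduce the statement to the invariance of a single Shalika germ expansion under the perturbations $\eta\in(\mathfrak z_i)_{r_i+}$, applied one graded piece at a time, and then to invoke the machinery set up in Section \ref{secmain}. First I would observe that it suffices to treat a single summand: if we can show $\Gamma_{\CO}(\gamma+\eta_j)=\Gamma_{\CO}(\gamma)$ for each $\eta_j\in(\mathfrak z_j)_{r_j+}$ individually (where now $\gamma+\eta_j$ is itself of the same shape, with $\gamma_j$ replaced by $\gamma_j+\eta_j$, still good of depth $r_j$ since $\eta_j$ has strictly larger depth and lies in the centralizing torus $\mathfrak z_j\subset\Lg_j$), then iterating over $j=n,n-1,\dots,0$ and using commutativity of all the $\gamma_i$ and $\eta_j$ gives the full claim. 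So fix $j$ and set $\eta=\eta_j$.

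Next I would set up the induction on $n$ mirroring the one-step reduction of Section \ref{secmain}. Write $\gamma=\gamma_0+\gamma'$ with $\gamma'=\gamma_1+\dots+\gamma_{n+1}$ of depth $>r_0$, and $G'=Z_G(\gamma_0)=G_0$. The computation leading to \eqref{homoeq} expresses $\Gamma_{\CO}(\gamma)$, via the functions $f_i$ separating nilpotent orbits, in terms of the numbers $I^{G'}(\gamma',\tilde f'_{i,w,\gamma_0})$ and the black-box germs $\Gamma_{\CO'}(\gamma')$; the only place $\gamma$ enters beyond $\gamma_0$ is through $\gamma'$, and the only place $\gamma'$ enters the auxiliary functions $\tilde f'_{i,w,\gamma_0}$ is as the evaluation point of an orbital integral on $\Lg'$. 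If $j\ge 1$, then $\eta\in(\mathfrak z_j)_{r_j+}\subset\Lg'$ and $\gamma'+\eta$ has the same shape over $G'$; by the inductive hypothesis applied to $G'$ we get $\Gamma_{\CO'}(\gamma'+\eta)=\Gamma_{\CO'}(\gamma')$ for all $\CO'$, and $I^{G'}(\gamma'+\eta,\tilde f'_{i,w,\gamma_0})=I^{G'}(\gamma',\tilde f'_{i,w,\gamma_0})$ follows from the Shalika expansion \eqref{homoeq} for $\tilde f'_{i,w,\gamma_0}$ itself (its validity is exactly what was checked after \eqref{homoeq}, and it holds for both $\gamma'$ and $\gamma'+\eta$ since both have depth $>r_0$ and $\tilde f'_{i,w,\gamma_0}$ is locally constant by a lattice of depth $>r_0$). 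Also $|D^G(\gamma+\eta)|=|D^G(\gamma)|$ and $|D^{G'}(\gamma'+\eta)|=|D^{G'}(\gamma')|$ because adding a strictly-higher-depth central element does not change the valuations of $d\alpha$ for any root $\alpha$. Hence the entire right-hand side of the formula for $I^G(\gamma,f_i)$ is unchanged, and inverting the matrix $\{I^G(\CO,f_i)\}$ gives $\Gamma_{\CO}(\gamma+\eta)=\Gamma_{\CO}(\gamma)$.

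The base case, and the genuinely new point, is $j=0$: here $\eta\in(\mathfrak z_0)_{r_0+}$, so $\eta$ commutes with everything, $\gamma_0+\eta$ is still good of depth $r_0$ with $Z_G(\gamma_0+\eta)=G'$ unchanged, and $\gamma+\eta=(\gamma_0+\eta)+\gamma'$. Now $\gamma_0$ appears in the formula for $I^G(\gamma,f_i)$ both through the index set $W_{x_i}$ of Lemma \ref{finite} and through the twisted functions $\tilde f'_{i,w,\gamma_0}(\eta')=\sum_{\dot g}f_i(\ad(\dot gw)(\gamma_0+\eta'))$. The key observation is that replacing $\gamma_0$ by $\gamma_0+\eta$ in $\tilde f'_{i,w,\gamma_0}$ amounts to translating its argument by $\eta\in\Lg'_{w^{-1}x_i,r_0+}\subset\Lg'_{w^{-1}x_i,d+}$, i.e. the new function is $\tilde f'_{i,w,\gamma_0+\eta}(\eta')=\tilde f'_{i,w,\gamma_0}(\eta'+\eta)$ — this uses that $f_i$ is locally constant by $\Lg_{x_i,d+}$ and $\ad(w)\eta\in\Lg_{x_i,d+}$, which follows from Lemma \ref{KM} since $\gamma_0+\eta\in\Lg_{x_i,d}$ forces $x_i,w^{-1}x_i\in\B_{G'}$ and then $\eta$ has depth $>d=r_0$ in $\B_{G'}$. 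A translate of a very-smooth function of depth $>r_0=\depth(\gamma')$... wait: one must be careful that $\depth(\gamma')=r_1>r_0$, so both $\tilde f'_{i,w,\gamma_0}$ and its $\eta$-translate are locally constant by $\Lg'_{w^{-1}x_i,r_0+}$, and since $\eta$ lies in that lattice and $\gamma'$ has depth $r_1>r_0$, the germ expansions \eqref{homoeq} for the two functions have the \emph{same} coefficients $I^{G'}(\CO',\cdot)$ — because $I^{G'}(\CO',g(\cdot+\eta))=I^{G'}(\CO',g)$ as $\CO'$ consists of nilpotents of infinite depth and is $G'$-stable, so integrating $g(\cdot+\eta)$ over $\CO'$ equals integrating $g$ over $\CO'-\eta$... no, over $\CO'$ shifted, which is not $\CO'$. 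The correct argument is rather that $I^{G'}(\CO', h)$ for $h$ very smooth of depth $>\depth(\gamma')$ depends only on the restriction of $h$ to a neighborhood of $\CO'$, combined with DeBacker's homogeneity; alternatively, and more cleanly, one notes $\gamma'$ and $\gamma'+\eta$ have depth $>r_0$ hence $I^{G'}(\gamma'+\eta, \tilde f'_{i,w,\gamma_0}) = I^{G'}(\gamma', \tilde f'_{i,w,\gamma_0}(\cdot-\eta)) = I^{G'}(\gamma', \tilde f'_{i,w,\gamma_0+\eta})$ directly by change of variables in the orbital integral, so what we need is $\Gamma_{\CO'}(\gamma'+\eta)=\Gamma_{\CO'}(\gamma')$ — but that is again an instance of the $j=0$ case for the smaller group $G'$ if $r_0$ were a gap there, which it is not...

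The hard part will be making this $j=0$ step genuinely base-level rather than circular: I expect the cleanest route is to prove directly that $\Gamma_{\CO}$ is constant on cosets of $(\mathfrak z_0)_{r_0+}$ where $\mathfrak z_0$ is the center of $G'=Z_G(\gamma_0)$, by invoking DeBacker's homogeneity \cite[Thm. 2.1.5]{De02a} together with the change-of-variables $f\mapsto f(\cdot+\eta)$ on $C_c^\infty(\Lg)$, observing that for $f$ very smooth of depth $d=r_0$ one has $I^G(\gamma,f)=I^G(\gamma+\eta, f(\cdot+\eta))$ trivially, while $I^G(\CO,f(\cdot+\eta))=I^G(\CO,f)$ because $\eta$ has depth $>d$ and $\CO$ is a cone with the right homogeneity — so that the identity \eqref{Shalika} for $\gamma$ against all such $f$ transports to the identity for $\gamma+\eta$ against all such $f$, forcing equality of germs after checking these $f$ still separate nilpotent orbits (Lemma \ref{DeBacker}). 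The main obstacle is thus a bookkeeping one: keeping the depths straight so that every function in sight remains very smooth of the relevant depth, and verifying the equality $I^G(\CO,f(\cdot+\eta))=I^G(\CO,f)$ which should reduce to the fact that translating by a depth-$(>d)$ element does not move the support of $f$ modulo the lattice $\Lg_{y,d+}$ by which $f$ is locally constant.
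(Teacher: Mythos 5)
Your reduction to a single summand $\eta_j$ and your treatment of the cases $j\ge 1$ (absorbing the perturbation into the black-box germs $\Gamma_{\CO'}$ of $G'$ via \eqref{homoeq}) is exactly the paper's ``inductive nature of the algorithm'' step. The genuine gap is the base case $j=0$, which you yourself leave unresolved --- and in fact the step you abandoned closes on its own with what you already wrote. You observed that $\til{f}'_{i,w,\gamma_0+\eta}(\eta')=\til{f}'_{i,w,\gamma_0}(\eta'+\eta)$ with $\eta\in(\mathfrak{z}_0)_{r_0+}\subset\Lg'_{w^{-1}x_i,d+}$; but $\til{f}'_{i,w,\gamma_0}$ is locally constant by $\Lg'_{w^{-1}x_i,d+}$ (as noted right after its definition in Section \ref{secmain}), so this translation does nothing and $\til{f}'_{i,w,\gamma_0+\eta}=\til{f}'_{i,w,\gamma_0}$ as functions. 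Since $\gamma_0+\eta$ is still good of depth $r_0$ with the same centralizer $G'$ (hence the same $W_{x_i}$, the same discriminant valuations, and the same $\gamma'$ in the decomposition $\gamma+\eta=(\gamma_0+\eta)+\gamma'$), the right-hand side of the descent formula for $I^G(\gamma+\eta,f_i)$ is literally that for $I^G(\gamma,f_i)$; no identity of the form $\Gamma_{\CO'}(\gamma'+\eta)=\Gamma_{\CO'}(\gamma')$ is needed, so the circularity you feared is not there. (Equivalently, with the decomposition $\gamma_0+(\gamma'+\eta)$: since $\eta$ is central in $\Lg'$, one has $I^{G'}(\gamma'+\eta,\til{f}')=I^{G'}(\gamma',\til{f}'(\cdot+\eta))=I^{G'}(\gamma',\til{f}')$, again by the same local constancy.) This is precisely the paper's proof: the algorithm sees $\gamma_0$ only through its image in $(\mathfrak{z}_0)_{r_0:r_0+}$.

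By contrast, the ``cleanest route'' you propose at the end does not work: $\eta\in\mathfrak{z}_0$ is central in $\Lg'$ but not in $\Lg$, so $X\mapsto X+\eta$ does not carry the $G$-orbit of $\gamma$ to that of $\gamma+\eta$, and the identity $I^G(\gamma,f)=I^G(\gamma+\eta,f(\cdot+\eta))$ is false in general; likewise $I^G(\CO,f(\cdot+\eta))=I^G(\CO,f)$ is unjustified for $f$ very smooth of depth $d$ at a point $y\notin\B_{G'}$, since $\eta$ need not lie in $\Lg_{y,d+}$ there. Any argument at the level of $G$ must first localize to $\B_{G'}$ via Lemmas \ref{KM} and \ref{finite}, which is exactly what the descent formula you had already written down accomplishes.
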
\qp

\begin{proof}
By the inductive nature of the algorithm in Section \ref{secmain} we only have to prove it for the part about $\gamma_0$ and $\mathfrak{z}_0$. Changing $\gamma_0$ by some $\eta_0\in(\mathfrak{z}_0)_{r_0+}$ does not affect its centralizer in $\Lg$. Moreover, the algorithm in Section \ref{secmain} only makes use of the image of $\gamma_0$ in $(\mathfrak{z}_0)_{r_0:r_0+}$ (when defining function $\til{f}_{i,w,\gamma_0}'$). This proves the corollary.
\end{proof}\hp

\begin{definition} For $\gamma\in\Lg$ whose semisimple part $\gamma_s$ lies in the Lie algebra of a maximal torus $T\subset G$, similar to \cite[Def. 4.1]{AK07} we define singular depth by $s(\gamma):=\max\{\text{val}(d\alpha(\gamma_s))\,|\,\alpha\in\Phi(G/_{\bar{F}},T/_{\bar{F}})\}.$
\end{definition}\hp

$s(\gamma)$ measures how close $\gamma$ is to the singular locus $\Lg\backslash\Lg^{rs}$. For example $\gamma$ is regular and good iff $s(x)=\text{depth}(\gamma)$, and $s(\gamma)=+\infty$ iff $\gamma\not\in\Lg^{rs}$ is not regular semisimple. It was an idea of Hales that Shalika germs should have homogeneity properties near a singular semisimple element in an inductive manner. Indeed we have\hp

\begin{corollary}\label{homo}
For any $\gamma\in\Lg$, suppose $\gamma=\gamma_0+\gamma_1$ with $[\gamma_0,\gamma_1]=0$ and $s(\gamma_0)<\text{depth}(\gamma_1)$. Then for any nilpotent orbit $\CO\in\CO^G(0)$, we have
\[\Gamma_{\CO}(\gamma_0+t^2\gamma_1)=p_{\CO,\gamma_0,\gamma_1}(|t|),\;\forall t\in F^{\times}, |t|\le 1,\]\qp

where $p_{\CO,\gamma_0,\gamma_1}(\cdot)$ is an even polynomial with coefficients in $\Q$ with degree no larger than $\dim G'-\text{rank}_{\bar{F}}G'$, where $G'=Z_G(\gamma_0)$.
\end{corollary}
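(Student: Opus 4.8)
The plan is to run the algorithm of Section~\ref{secmain} on $\gamma^{(t)}:=\gamma_0+t^2\gamma_1$ for all $t\in F^\times$ with $|t|\le1$ at once, isolate exactly where the parameter $t$ enters, and then feed this into the homogeneity of nilpotent orbital integrals. Write $\gamma_0=\gamma_{0,s}+\gamma_{0,n}$ and $\gamma_1=\gamma_{1,s}+\gamma_{1,n}$ for the Jordan decompositions.

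First I would exploit the hypothesis $s(\gamma_0)<\depth(\gamma_1)$ exactly as in the proof of the preceding corollary. The semisimple part of $\gamma^{(t)}$ is $\gamma_{0,s}+t^2\gamma_{1,s}$, and $\depth(\gamma_{0,s})\le s(\gamma_0)<\depth(t^2\gamma_{1,s})$ for $|t|\le1$. Hence the good (semisimple) layers that the algorithm peels off from $\gamma^{(t)}$ which come from $\gamma_0$, the twisted Levi subgroups $G=G^{(0)}\supsetneq G^{(1)}\supsetneq\cdots$ produced along the way down to $M:=Z_G(\gamma_{0,s})$, the attached nilpotent data, the functions $f_i$, the sets $W_{x_i}$, the auxiliary functions $\tilde{f}'$ and the combinatorial numbers $\tilde n_w(\delta)$ and $I^{\bullet}(\CO,\tilde{f}')$, \emph{and} each discriminant \emph{ratio} $|D^{G^{(j)}}(\cdot)|^{1/2}/|D^{G^{(j+1)}}(\cdot)|^{1/2}$ attached to these steps (which is unchanged on replacing $t^2\gamma_1$ by $0$, since every occurring root $\alpha$ has $\mathrm{val}(d\alpha(\gamma_{0,s}))\le s(\gamma_0)<\depth(t^2\gamma_1)$, the would-be $t$-power from the roots of $M$ cancelling between numerator and denominator), are all independent of $t$. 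Therefore, up to $t$-independent rational constants,
\[
\Gamma^G_\CO(\gamma_0+t^2\gamma_1)=\sum_{\CO''\in\CO^M(0)}(\text{const})\cdot\Gamma^M_{\CO''}(\gamma_{0,n}+t^2\gamma_1),
\]
so everything is reduced to a statement about Shalika germs of the twisted Levi $M$ at $\gamma_{0,n}+t^2\gamma_1$, whose semisimple part $t^2\gamma_{1,s}$ scales with $t$.

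The remaining, main task is to show that each $\Gamma^M_{\CO''}(\gamma_{0,n}+t^2\gamma_1)$ equals $|t|^{e(\CO'')}$ times a $t$-independent rational number, with $e(\CO'')$ a nonnegative \emph{even} integer and $\max_{\CO''}e(\CO'')\le\dim G'-\mathrm{rank}_{\bar{F}}G'$. When $\gamma_0$ is semisimple ($\gamma_{0,n}=0$) this is the homogeneity of normalized Shalika germs: combining the scaling identity $I^M(t^2\xi,f)=|t|^{\#\{\alpha\in\Phi(M):\,d\alpha(\xi_s)\neq0\}}\,I^M(\xi,f\circ(t^2\cdot))$ for the normalized orbital integral with the homogeneity of the (Kottwitz-normalized) nilpotent orbital integrals under $f\mapsto f\circ(t^2\cdot)$ and the linear independence of the latter, one reads off $\Gamma^M_{\CO''}(t^2\gamma_1)=|t|^{e(\CO'')}\Gamma^M_{\CO''}(\gamma_1)$; that $e(\CO'')$ is a nonnegative even integer, and that its maximum over the $\CO''$ with $\Gamma^M_{\CO''}(\gamma_1)\neq0$ is bounded by $\dim(M\cdot\gamma_1)\le\dim M-\mathrm{rank}\,M=\dim G'-\mathrm{rank}_{\bar{F}}G'$, then comes from the parity of $\#\{\alpha\in\Phi(M):d\alpha(\gamma_{1,s})\neq0\}$ and of $\dim\CO''$, the support estimate $\dim\CO''\le\dim(M\cdot\gamma_1)$ for Shalika germs, and $\dim Z_M(\gamma_1)\ge\mathrm{rank}\,M$. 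These parity and degree statements are really the substance.

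The hard part will be the general case, where $\gamma_{0,n}\neq0$. Then the bottom of the recursion involves the nilpotent element $\gamma_{0,n}+t^2\gamma_{1,n}$ of $\operatorname{Lie}L'$, where $L':=Z_G(\gamma_{0,s})\cap Z_G(\gamma_{1,s})=Z_G(\gamma_{0,s}+t^2\gamma_{1,s})$ is a $t$-independent twisted Levi; and its $L'$-orbit can genuinely \emph{jump} for finitely many values of $t$ (for instance when $t^2$ meets a special scalar), so the individual bottom terms $\Gamma^{L'}_{\mathfrak n}(\gamma_{0,n}+t^2\gamma_{1,n})=\delta_{\mathfrak n,\,\mathrm{orbit}_{L'}(\gamma_{0,n}+t^2\gamma_{1,n})}$ are themselves \emph{not} polynomial in $|t|$. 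One must therefore show that these jumps cancel in the full sum — equivalently, via Harish--Chandra descent, that the descent germ $\mathfrak g^G_{\CO,\,\mathcal O}(\gamma_{0,s}+t^2\gamma_{1,s})$ is, for each fixed nilpotent orbit $\mathcal O$ in $\operatorname{Lie}L'$, a polynomial in $|t|$ of the asserted form (again homogeneity, now for descent germs, using $s(\gamma_{0,s})<\depth(t^2\gamma_{1,s})$), and moreover that these polynomials for orbits related by closure agree at the special values of $|t|$. This compatibility of descent germs with orbit degeneration is, I expect, the crux of the argument; when $\gamma_0$ is semisimple the orbit of $t^2\gamma_{1,n}$ is independent of $t$ and this difficulty evaporates, so one could alternatively first settle the semisimple case, though the reduction of the general case to it is not itself purely formal.
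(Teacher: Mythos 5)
Your first two paragraphs are essentially the paper's own proof: run the algorithm of Section~\ref{secmain} on $\gamma_0+t^2\gamma_1$, note that for $|t|\le 1$ the hypothesis $s(\gamma_0)<\depth(\gamma_1)$ forces all the peeled good layers, twisted Levis, test functions $f_i$, sets $W_{x_i}$, numbers $\tilde n_w(\delta)$, point counts and discriminant ratios to be independent of $t$, so that via (\ref{homoeq}) the germ $\Gamma_{\CO}(\gamma_0+t^2\gamma_1)$ becomes a $t$-independent $\Q$-linear combination of normalized germs of $G'=Z_G(\gamma_0)$ evaluated at $t^2\gamma_1$, and then invoke homogeneity of normalized Shalika germs for $G'$, the exponents $\dim G'-\mathrm{rank}_{\bar F}G'-\dim\CO'$ being even and at most $\dim G'-\mathrm{rank}_{\bar F}G'$. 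Your explicit bookkeeping of the parity and of the support of the germs makes precise what the paper leaves implicit, and is fine.

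The ``hard general case'' of your last paragraph, on which you leave the argument conditional, is however not part of the statement, so there is no genuine remaining gap there. The conclusion uses $G'=Z_G(\gamma_0)$ as a twisted Levi, i.e.\ a reductive group with its own Shalika germs and $\bar F$-rank; this requires $\gamma_0$ to be semisimple, since $Z_G(\gamma_0)$ is not reductive when $\gamma_0$ has a nonzero nilpotent part (and, read literally, the paper's definition of singular depth even gives $s(\gamma_0)=+\infty$ as soon as $\gamma_{0,s}$ is non-regular, so such decompositions are excluded by the hypothesis outright). Any nilpotent component of $\gamma$ has depth $+\infty$ and therefore naturally sits inside $\gamma_1$, where it causes no trouble: the element reached at the bottom of the descent is the exact scaling $t^2\gamma_1$ of one fixed (possibly non-semisimple) element of $\Lg'$, and homogeneity of normalized germs applies verbatim to such scalings — no nilpotent orbit is being deformed as $t$ varies, so there is no orbit jumping and no need for any compatibility of descent germs with orbit degeneration. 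With that observation, your first two paragraphs already constitute a complete proof along the same route as the paper; you should simply delete the contingency rather than flag it as the crux.
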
\qp

This corollary follows from (\ref{homoeq}) in the algorithm and the homogeneity property of the (normalized) Shalika germs for $G'$. In fact, the coefficient of $|t|^{2d}$ in $\gamma_{\CO,\gamma_0,\gamma_1}(|t|)$ depends linearly on Shalika germs at $\gamma_1$ of nilpotent orbits in $\Lg'$ whose dimensions are equal to $\dim G'-\text{rank}_{\bar{F}}G'-2d$.\p

As mentioned in the introduction, this suggests the principle that all complications of orbital integral/Shalika germs must appear before $s(\gamma)$ gets too deep. We give an example just to demonstrate this idea.\hp

\begin{corollary} To prove the fundamental lemma for the Lie algebra for arbitrary $\gamma\in\Lg^{rs}$, it suffices to do so for those $\gamma$ for which $s(\gamma)<\text{rank}_{\bar{F}}G\cdot\dim G$.
\end{corollary}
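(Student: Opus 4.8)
The plan is to combine the twisted-Levi descent underlying Section~\ref{secmain} with the homogeneity of Corollary~\ref{homo}, using the second to propagate the fundamental lemma from the shallow range into all of $\Lg^{rs}$. The first step is to put the Lie algebra fundamental lemma into Shalika-germ form: by the germ expansion \cite{De02a} together with the dilation homogeneity of orbital integrals (which makes ``the unit'' and ``all dilated lattices $\varpi^n\Lg(\CO_F)$'' interchangeable, and lets us always work with a test function deep enough for the expansion to be valid at the given $\gamma$), the fundamental lemma for $G$ and a fixed endoscopic datum becomes, as $\gamma$ varies over $\Lg^{rs}$, a system of linear identities expressing the normalized germs $\Gamma_{\CO}^{G}(\gamma)$ through those of the endoscopic group along a fixed correspondence of nilpotent orbits. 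It then suffices to produce these identities whenever $s(\gamma)<\mathrm{rank}_{\bar F}G\cdot\dim G$.

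I would induct on $\dim G$, taking the same statement for every proper twisted Levi subgroup as inductive hypothesis, and within the inductive step induct on the number of good components of $\gamma$ (Lemma~\ref{JorDec}). If $\gamma$ is good regular semisimple, then $s(\gamma)=\depth(\gamma_s)$, and dilation by a suitable $c\in F^{\times}$ brings $\depth(\gamma_s)$ into $[0,1)$; since the normalized germs transform under $\gamma\mapsto c^{2}\gamma$ by the known scalars $|c|^{(\dim G-\mathrm{rank}_{\bar F}G-\dim\CO)/2}$, the identity at $\gamma$ is equivalent to the one at the shallow element $c\gamma$, which lies in the assumed range. Otherwise write $\gamma=\gamma_0+\gamma_1$ with $\gamma_0$ good, non-central, of depth $\depth(\gamma_s)$ (dilated so this is $<1$), $[\gamma_0,\gamma_1]=0$ and $\depth(\gamma_1)>s(\gamma_0)$, and set $G'=Z_G(\gamma_0)$, a proper twisted Levi. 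By Corollary~\ref{homo} and the discussion following it, $(\Gamma_{\CO}^{G}(\gamma))_{\CO}$ is a fixed linear function of $(\Gamma_{\CO'}^{G'}(\gamma_1))_{\CO'}$; the analogous statement holds on the endoscopic side once Langlands--Shelstad descent is used to pass from the endoscopic data of $G$ near the image of $\gamma_0$ to those of $G'$. Hence the identity at $\gamma$ follows from the identities for $G'$ at $\gamma_1$ — supplied by the inductive hypothesis for $G'$ after $\gamma_1$ has been brought into the shallow range of $G'$ (the germ identities being equivariant under dilation, so this step is harmless) — together with a finite list of identities among nilpotent orbital integrals on $G'$ and its endoscopic groups, i.e.\ the ``combinatorial'' numbers of Section~\ref{secmain}, which carry no $\gamma$-dependence and express the compatibility of the descent maps with the nilpotent-orbit correspondences.

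Tracking the two inductions yields the bound: the recursion terminates after at most $\mathrm{rank}_{\bar F}G$ descent steps (a chain of proper twisted Levi subgroups has length at most $\mathrm{rank}_{\bar F}G$, and each step preserves $\mathrm{rank}_{\bar F}G$), while at each step the range of depths over which genuinely new, non-dilatable information can enter is controlled by the degree of the interpolating polynomial of Corollary~\ref{homo}, which is $\le\dim G'-\mathrm{rank}_{\bar F}G'\le\dim G$; summing over the steps gives $\mathrm{rank}_{\bar F}G\cdot\dim G$, with room to spare — consistent with this being only a demonstration and the bound not being sharp. The main obstacle I anticipate is making the descent of Section~\ref{secmain}, written for ordinary orbital integrals, compatible with endoscopic transfer: one must invoke Langlands--Shelstad descent for the Lie algebra in the present generality, and one must know that the finite family of nilpotent-orbital-integral identities appearing on the twisted Levis are consequences of the fundamental lemma for those smaller groups rather than new input. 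A more routine difficulty is keeping the dilation normalizations of the successive good components mutually consistent so that the final depth count stays clean.
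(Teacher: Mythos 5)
Your inductive step has a genuine gap: it proves the wrong implication. The corollary is a reduction \emph{within} $G$ — the only hypothesis available is the fundamental lemma for $G$ itself at elements with $s(\gamma)<\mathrm{rank}_{\bar F}G\cdot\dim G$. In your non-good case you instead deduce the identity at $\gamma$ from ``the identities for $G'$ at $\gamma_1$, supplied by the inductive hypothesis for $G'$''; but the inductive hypothesis for $G'$ is itself only a conditional reduction, and the fundamental lemma for $G'$ at shallow elements (the input it would need) is nowhere provided by the hypothesis about $G$. On top of that, passing the identity through the linear relation (\ref{homoeq}) requires Langlands--Shelstad descent to match the endoscopic data and transfer factors of $G$ near $\gamma_0$ with those of $G'$ — a substantial external input which you yourself flag as the main obstacle, and which this ``formal application'' is not entitled to assume. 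Symptomatically, in your architecture the hypothesis on shallow elements of $G$ is only used in the base case of good $\gamma$, so the final depth-counting paragraph is disconnected from the mechanism that is actually supposed to prove the identity.

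The intended argument never descends to $G'$ at all. Write $\gamma=\gamma_0+\gamma_1+\dots+\gamma_n$ as a sum of commuting good elements of strictly increasing depths, with $n<\mathrm{rank}_{\bar F}G$ (Lemma \ref{JorDec} iterated). By Corollary \ref{homo}, $\Gamma_{\CO}(\gamma_0+t^2\gamma_1)$ is an even polynomial in $|t|$ of degree at most $\dim G'-\mathrm{rank}_{\bar F}G'\le\dim G$, and likewise at each later stage; hence the fundamental lemma identity at $\gamma$, viewed as a function of the scalings of the successive components by even powers of a uniformizer, is a polynomial identity which is forced once it is known at roughly $\tfrac12\dim G$ scaling values per stage. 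Choosing those values keeps every depth gap bounded by about $\dim G$, so the identity at arbitrary $\gamma$ follows by interpolation from its validity at elements with $s(\gamma)<\mathrm{rank}_{\bar F}G\cdot\dim G$ (one still has to check that the transfer factors behave predictably under these scalings, which is the only endoscopic input needed). You cite the polynomial degree only to ``control the range of depths,'' but in your proof the interpolation never actually carries the logical weight; making it the engine of the argument, with the single group $G$, is what closes the gap.
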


\begin{proof} Any element $\gamma\in\Lg^{rs}$ can be written into a sum $\gamma=\gamma_0+\gamma_1+...+\gamma_n$ of commuting good elements of strictly increasing depths, with $n<\text{rank}_{\bar{F}}G$. Using Corollary \ref{homo}, one can scale the depths of these $\gamma_i$ by even integers, and it suffices to run over no more than $\frac{1}{2}\dim G$ even integers for each to interpolate the polynomials $\gamma_{\CO,\gamma_0,\gamma_1}$ (one also needs to check that the transfer factor behaves under this scaling, etc).
\end{proof}\p

Let us denote by $1_{\Lg_x}\in C_c^{\infty}(\Lg)$ the characteristic function of $\Lg_x$. The author became aware of the following direction of applications thanks to Julia Gordan and Nicolas Templier.\hp

\begin{corollary}\label{bound} Fix $x\in\B_G$. There exists constants $C>0$ and $N\in\Z_{>0}$ such that for any regular semisimple $\gamma\in\Lg$, the normalized orbital integral $I^G(\gamma,1_{\Lg_x})$ is bounded by $q^{C\cdot(\dim G)^N\cdot\mathrm{rank}_{\bar{F}}G}$. The constants $C$ and $N$ are universal (independent of $G$, $F$, $x$, etc). One can take $N=2$.
\end{corollary}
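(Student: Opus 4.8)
The plan is to iterate the reduction of Section~\ref{secmain} on $I^G(\gamma,1_{\Lg_x})$ while keeping track of the size of every quantity that is produced. Begin with harmless normalization: if no $G$-conjugate of $\gamma$ lies in $\Lg_x=\Lg_{x,0}$ then $I^G(\gamma,1_{\Lg_x})=0$, so after conjugating we may assume $\gamma\in\Lg_{x,0}$, and in particular $d:=\depth(\gamma)\geq 0$. This inequality is the one structural fact used essentially: writing $\gamma=\gamma_0+\gamma'$ and $G'=Z_G(\gamma_0)$ as in Lemma~\ref{JorDec} and the paragraph after it, every absolute root $\alpha$ not occurring in $G'$ satisfies $\mathrm{val}(d\alpha(\gamma_0))=d$ while $\mathrm{val}(d\alpha(\gamma'))>d$ (since $\depth(\gamma')>d$ forces $\mathrm{val}(d\alpha(\gamma'))>d$ for every root), so $\mathrm{val}(d\alpha(\gamma))=d$ and therefore
\[
\frac{|D^G(\gamma)|^{1/2}}{|D^{G'}(\gamma')|^{1/2}}=q^{-\frac12\,(\#\text{roots of }G\text{ not in }G')\,d}\ \leq\ 1 .
\]
For a shallow $\gamma$ this ratio would be an unbounded power of $q$, which is exactly why one first brings the orbit into $\Lg_x$.

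Next, the reduction step. The identity for $I^G(\gamma,f)$ in Section~\ref{secmain} rests on Lemmas~\ref{KM} and~\ref{finite} and hence requires $f$ supported on $\Lg_{x,d}$, whereas $1_{\Lg_{x,0}}$ is only locally constant by $\Lg_{x,d+}$; so split $1_{\Lg_{x,0}}=1_{\Lg_{x,d}}+\sum_{0\leq r<d}1_{\Lg_{x,r}\smallsetminus\Lg_{x,r+}}$. To $1_{\Lg_{x,d}}$, which is very smooth of depth $d$, the reduction applies verbatim: $I^G(\gamma,1_{\Lg_{x,d}})=\sum_{w\in W_x}c_w\,I^{G'}(\gamma',\til f'_w)$ with $0\leq \til f'_w\leq|\sG_x(k)|\cdot 1_{\Lg'_{w^{-1}x,d}}$, and monotonicity of orbital integrals of non-negative functions gives $I^{G'}(\gamma',\til f'_w)\leq|\sG_x(k)|\cdot I^{G'}(\gamma',1_{\Lg'_{w^{-1}x,d}})$ — an expression of exactly the same shape, now for the \emph{proper} twisted Levi $G'$ and the \emph{deeper} regular semisimple $\gamma'$ (still of non-negative depth, with $\Lg'_{w^{-1}x,d}$ playing the role of $\Lg'_{w^{-1}x,0}$). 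Each remaining shell $\Lg_{x,r}\smallsetminus\Lg_{x,r+}$ with $r<d$ is treated the same way: a conjugate of $\gamma$ meeting it is at $x$ governed by a nilpotent datum of depth $r$ (Definition~\ref{nildat}) whose associated twisted Levi is again proper in $G$, and one recurses.

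Now the bookkeeping. Each application multiplies the running bound by: $|W_x|$, which is at most the number of $S$-orbits on a $G$-orbit in the apartment, hence at most $|W(\uG,\uS)|$ times the fundamental group, $\leq q^{(\dim G)^2}$ crudely (using $|W|\leq(\dim G)!$ and $q\geq 2$); the discriminant ratio, which is $\leq 1$; and the measure factors $|G_{x,0+}|/|G'_{w^{-1}x}|$ and $|\sG_x(k)|$, each a power $q^{O(\dim G)}$. Nothing else appears, because we are bounding a single orbital integral rather than extracting individual germs: there is no matrix inversion, no enumeration of $\CO^G(0)$, and — since the template varieties $\hat{\CH}_e(\delta)$ of Section~\ref{secgeom} enter only through the nilpotent terms $I^{G'}(\CO',\cdot)$ of a germ expansion — no geometry. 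The recursion terminates because $Z(G)\subseteq Z(G')\subseteq Z(G'')\subseteq\cdots$ strictly increases until it reaches $Z_G(\gamma)=T$ (as $\gamma$ is regular semisimple), so there are fewer than $\mathrm{rank}_{\bar{F}}G$ levels, with base case $I^T(\cdot,1_{\text{lattice}})\leq 1$. Multiplying the $q^{O((\dim G)^2)}$ cost over fewer than $\mathrm{rank}_{\bar{F}}G$ levels yields $I^G(\gamma,1_{\Lg_x})\leq q^{C\,(\dim G)^2\,\mathrm{rank}_{\bar{F}}G}$ with $C$ absolute, i.e. $N=2$.

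The \emph{main obstacle} is the shell sum $\sum_{0\leq r<d}I^G(\gamma,1_{\Lg_{x,r}\smallsetminus\Lg_{x,r+}})$: a priori the number of contributing shells grows with both $\depth(\gamma)$ and the denominator of $x$, so obtaining a bound independent of $\gamma$, $F$ and $x$ requires organizing these shells via the nilpotent-datum partition of $\Lg_{x,0}$ underlying Lemma~\ref{DeBacker} and showing that the shell sums telescope to something bounded — using the same recursion together with the homogeneity of nilpotent-orbit contributions under $r\mapsto r+1$. (Equivalently, when $F=k((t))$ one can instead bound $I^G(\gamma,1_{\Lg_x})$ by $q^{O(\dim G)}\cdot\#\mathcal{X}(k)$ for the relevant affine Springer fiber $\mathcal{X}$, use $\dim\mathcal{X}=\tfrac12(\mathrm{val}\,D^G(\gamma)-\mathrm{def})$ to absorb $q^{\dim\mathcal{X}}$ into $|D^G(\gamma)|^{1/2}$, and bound the total Betti number of $\mathcal{X}$ by the same recursion via the stratification of Theorem~\ref{Amherst}, whose fibers are the $\hat{\CH}_e(\delta)$, smooth of topological complexity $q^{O((\dim G)^2)}$.) Everything else is crude size estimation.
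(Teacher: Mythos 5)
Your single descent step and the crude size bookkeeping (the $\leq 1$ discriminant ratio after conjugating into $\Lg_{x,0}$, the $q^{O(\dim G)}$ bounds on $|W_x|$, measures and reductive-quotient point counts, and the $<\mathrm{rank}_{\bar F}G$ levels) are fine, but the proposal does not actually prove the corollary: the shell sum you yourself flag as the main obstacle is left unresolved, and it is not a minor technicality. The number of shells $\Lg_{x,r}\smallsetminus\Lg_{x,r+}$ with $0\le r<\depth(\gamma)$ grows with $\depth(\gamma)$ (and with the denominator of the Moy--Prasad filtration at $x$), so a shell-by-shell recursion gives a bound depending on $\gamma$ unless one proves a uniformity-in-depth statement. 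Moreover the mechanism you invoke for a shell with $r<\depth(\gamma)$ is misstated: a nilpotent datum $(x,r,e)$ has no ``associated twisted Levi,'' and the descent it triggers in DeBacker's framework is the motion $x\mapsto x-s\lambda$ along the associated cocharacter (the analogue of Theorem \ref{Amherst}(ii)--(iii)), which does \emph{not} decrease the group; the passage to the proper twisted Levi $G'=Z_G(\gamma_0)$ via Lemma \ref{KM} only occurs when the lattice depth equals $\depth(\gamma)$ (the analogue of \ref{Amherst}(iv)). So the recursion you set up (which terminates because the group shrinks) is not the recursion that handles the shells, and the proposed ``telescoping via homogeneity'' is exactly the content of DeBacker's homogeneity/Shalika-germ theorem rather than something that follows from your bookkeeping.

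This is also where your route genuinely diverges from the paper's proof, which embraces the germs instead of avoiding them: it applies the germ expansion to $1_{\Lg_x}$ itself (valid since $1_{\Lg_x}$ is locally constant by $\Lg_{x,0+}$ and, after conjugation, $\depth(\gamma)\ge 0$), thereby replacing the unbounded family of shells by the finitely many coefficients $\Gamma_{\CO}(\gamma)$ and the fixed numbers $I^G(\CO,1_{\Lg_x})$; the germs are then bounded by running the algorithm of Section \ref{secmain}, with the inversion of the blockwise triangular matrix $\{I^G(\CO,f_i)\}$ contributing $q^{O((\dim G)^2)}$, each inductive step contributing $q^{O((\dim G)^2)}$, and at most $\mathrm{rank}_{\bar F}G$ steps. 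If you want a germ-free argument you must prove the depth-uniformity of the shell contributions yourself (essentially redoing the homogeneity input), and your parenthetical affine Springer fiber variant has the same status: it only applies to $F=k((t))$, and bounding point counts by ``total Betti numbers'' of the strata is again an unproved uniformity claim, not crude size estimation.
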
\hp

\begin{proof} When we use test functions $f_i$'s to derive the Shalika germs $\Gamma_{\CO}(\gamma)$ from $I^G(\gamma,f_i)$, we need to invert a blockwise triangular matrix $\{I^G(\CO,f_i)\}$ with no more than $\dim G$ blocks ordered by $\dim\CO$. As each $f_i$ is associated to some orbit $\CO_i$, we have $I^G(\CO,f_i)/I^G(\CO_i,f_i)$ bounded by $q^{C_1\dim G}$ for some universal constant $C_1$. Also say the number of nilpontent orbits in $\Lg$ is bounded by $q^{C_2\dim G}$ for some universal $C_2$. So we have to invert a strictly blockwise upper triangular matrix with entries bounded by $q^{C_1\dim G}$ and size bounded by $q^{C_2\dim G}$. Moreover there are no more than $\dim G/2$ blocks, since each block correspond to a dimension of some nilpotent orbit. The entries in the inverse of the blockwise triangular matrix will thus be bounded by $q^{C_0(\dim G)^2}$ for some $C_0$.\p

All other numbers are of smaller order. The number of elements in $W_x$ (Lemma \ref{finite}) or any Weyl group that's ever involved is bounded by $(C_3)^{\dim G}$ for some $C_3$. And say the number of $k$-points on any connected reductive group over $k$ of dimension no more than $\dim G$ (i.e. reductive quotients) is bounded by $q^{C_4\dim G}$. Together this shows that each inductive step introduces numbers of magnitude at most $q^{C_0'(\dim G)^2}$ for some $C_0'$. Finally, the inductive process can be done in $\mathrm{rank}_{\bar{F}}G<\dim G$ steps since the semisimple rank of the twisted Levi subgroup decreases by at least $1$ in each step.
\end{proof}\hp

If we assume there is a $G$-equivariant mock exponential map $\displaystyle\bigcup_{y\in\B_G}\Lg_{y,0+}\xra{\sim}\bigcup_{y\in\B_G}G_{y,0+}$ (satisfying \cite[Hyp. 3.2.1]{De02a} for $r=0+$), then one can obtain the same result for orbital integral on $1_{G_x}$, that is\hp

\begin{corollary} Assuming the existence of a $G$-equivariant (mock) exponential map $\mathsf{e}:\bigcup\Lg_{y,0+}\xra{\sim}\bigcup G_{y,0+}$, then the normalized orbital integral $I^G(g,1_{G_x})$ is also bounded by $q^{C\cdot(\dim G)^N\cdot\mathrm{rank}_{\bar{F}}G}$.
\end{corollary}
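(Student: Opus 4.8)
The plan is to rerun the proof of Corollary \ref{bound} on the group, using the mock exponential $\mathsf{e}$ to supply the one genuinely Lie-algebraic ingredient it rests on, namely DeBacker's homogeneity results. Hypothesis 3.2.1 of \cite{De02a} (taken for $r=0+$, which is exactly our assumption) is precisely what makes \cite[Thm. 2.1.5]{De02a} and the associated germ expansion hold verbatim for $G$ acting on itself rather than on $\Lg$: for regular semisimple $g\in G$ and $f\in C_c^\infty(G)$ very smooth of a fine enough depth and supported on the topologically unipotent locus,
\[I^G(g,f)=\sum_{\CO\in\CO^G(0)}\Gamma_\CO(g)\,I^G(\CO,\mathsf{e}^*f),\]
where the nilpotent orbits $\CO$ still live in $\Lg$ and $\mathsf{e}^*f:=f\circ\mathsf{e}$. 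Hence the test functions of Lemma \ref{DeBacker} — now characteristic functions of sets $\sigma\cdot\mathsf{e}(e+\Lg_{x,d:d+,\le 1}+\Lg_{x,d+})$, with $\sigma$ the absolutely semisimple parameter — still separate nilpotent orbits, and the blockwise upper triangular matrix $\{I^G(\CO,f_i)\}$ that must be inverted has at most $\dim G$ blocks, with entries and size controlled exactly as in Corollary \ref{bound}.

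Next I would carry out the inductive reduction exactly as in Section \ref{secmain}. Given regular semisimple $g$, take the multiplicative analogue of the decomposition in Lemma \ref{JorDec}: $g=g_0\cdot g'$ with $g_0$ of the minimal depth $d$ (its reduction may be nontrivial; in the extreme case $g_0$ is absolutely semisimple), $G':=Z_G(g_0)$ a twisted Levi, and $g'\in G'$ of strictly larger depth commuting with $g_0$. The group analogues of Lemma \ref{KM} and Lemma \ref{finite} — that a $G$-translate of $G_x$ meeting a neighborhood of $g_0$ forces the translating element into $G_x\cdot W_x\cdot G'$ — then give
\[I^G(g,1_{G_x})=\sum_{w\in W_x}c_w\, I^{G'}(g',\til{h}'_w),\]
with $\til{h}'_w$ very smooth on $G'$ and $c_w$ a product of a transfer Jacobian $|D^G(g)|^{1/2}/|D^{G'}(g')|^{1/2}$, parahoric volumes, and an index inside $\sG_x(k)$, each of order $q^{O(\dim G)}$. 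Iterating this step, the semisimple rank of the twisted Levi drops by at least one each time, so after fewer than $\mathrm{rank}_{\bar{F}}G$ steps the remaining element is good regular and the recursion bottoms out; along the way the combinatorial numbers $\til{n}_w(\delta)$ and the point-counts of the templates $\hat{\CH}_e(\gamma)$ of Section \ref{secgeom} enter exactly as in Section \ref{secmain}.

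The final size estimate is then word-for-word that of Corollary \ref{bound}: the dominant cost of a single inductive step is inverting $\{I^G(\CO,f_i)\}$, bounded by $q^{C_0(\dim G)^2}$; all other quantities — Weyl-group cardinalities $\le C_3^{\dim G}$, point counts $\le q^{C_4\dim G}$ on reductive quotients, the $\le q^{C_2\dim G}$ nilpotent orbits, and the Jacobians and volumes above — are of lower order, so a single step costs at most $q^{C_0'(\dim G)^2}$. With fewer than $\mathrm{rank}_{\bar{F}}G$ steps this yields the bound $q^{C\cdot(\dim G)^2\cdot\mathrm{rank}_{\bar{F}}G}$, with the same universal $C$ and with $N=2$.

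I expect the main obstacle to be structural rather than numerical: one has to establish the group versions of Lemmas \ref{KM} and \ref{finite} with the required uniformity — in particular, under the running hypothesis $\mathrm{char}(k)\gg1$, that $Z_G(g_0)$ is connected reductive (or to arrange to work with it even when not, as the paper already does with disconnected $\sG_x$), and that the transfer Jacobians and parahoric-volume ratios entering $c_w$ are genuinely of subordinate order — and one must check that $\mathsf{e}$ being defined only on the topologically unipotent locus costs nothing, i.e. that the depth-$d$ part $g_0$ is always removed by the building-theoretic reduction (which uses no exponential) before $\mathsf{e}$ is applied to the higher-depth remainder. These are the precise group counterparts of facts already used on the Lie-algebra side, so modulo them the corollary follows.
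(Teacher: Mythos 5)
Your strategy---descend through twisted Levi subgroups and bound each step exactly as in Corollary \ref{bound}---is the right spirit, but the route you take is genuinely different from, and substantially heavier than, the paper's, and the items you defer as ``structural obstacles'' are precisely where it is heavier. The paper's proof uses group-level arguments only once: if $g\in G_{y,0+}$ for some $y\in\B_G$, pull back by $\mathsf{e}$ and quote Corollary \ref{bound} verbatim; otherwise reduce to $g\in G_{x,0}$ and take the topological Jordan decomposition $g=g_0g'$ with $g_0$ of finite prime-to-$p$ order (the analogue of a good element of depth $0$), $g'\in\bigcup G_{y,0+}$ commuting with $g_0$, set $G'=Z_G(g_0)$, use Landvogt's embedding $\B_{G'}\hookrightarrow\B_G$ \cite{La00} together with the analogue of Lemma \ref{KM} proved as in \cite[Thm. 2.2.1]{KM03}, run the first step of the Section \ref{secmain} algorithm with $f_i$ replaced by $1_{G_x}$, and then pull the entire remaining induction back by $\mathsf{e}$: the residual element $g'$ is topologically unipotent, so every later step is literally a Lie-algebra step already bounded. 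You instead propose to rerun the whole induction multiplicatively---a germ expansion on $G$ itself, multiplicative analogues of Lemma \ref{JorDec} (``good of minimal depth $d$'') at every depth, and group analogues of Lemmas \ref{KM} and \ref{finite} at every step.

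Two concrete cautions about that plan. First, the group-level germ expansion as you state it is vacuous or ill-posed for $g$ that is not topologically unipotent (the orbit of such $g$ misses the support of your test functions), and it is not needed: after the single depth-zero step the germ machinery is only ever invoked on the Lie algebra of a twisted Levi. Second, ``good elements of positive depth'' in the group, and the positive-depth multiplicative analogues of Lemmas \ref{KM} and \ref{finite}, are themselves only available through $\mathsf{e}$; once you invoke $\mathsf{e}$ to produce them you may as well transfer the whole topologically unipotent remainder to $\Lg'$ at once, which is exactly the paper's shortcut. So your argument can be repaired, but the repair collapses it onto the published, shorter proof; as written, the unestablished multiplicative machinery at positive depth is a genuine gap, whereas the only group-level input the statement really requires is the depth-zero case ($g_0$ absolutely semisimple, \cite{KM03}, \cite{La00}) plus the already-proven Corollary \ref{bound}.
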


\begin{proof}
If $g\in G_{y,0+}$ for some $y\in\B_G$ then pulling back by $\mathsf{e}$ reduces us to the Lie algebra case. Otherwise, we may assume $g\in G_{x,0}$, and write $g=g_0g'$, where $g_0$ has finite prime-to-$p$ order, $g'\in\bigcup G_{y,0+}$, and $g_0g'=g'g_0$. The element $g_0$ should be thought as an analogue of an element good of depth $0$. Let $G'=Z_G(g_0)$. An embedding $B_{G'}\hookrightarrow\B_G$ is established by Landvogt \cite{La00}. One proves analogous statement in Lemma \ref{KM} by following the same proof in \cite[Thm. 2.2.1]{KM03} and begin the first step of the algorithm with $f_i$ replaced by $1_{G_x}$. The rest of the inductive process can be pulled back by $\mathsf{e}$, and is the same as for Shalika germs before.
\end{proof}\hp

In \cite[Sec. 7, and Appendix B]{ST15}, Shin-Templier and Cluckers-Gordon-Halupczok showed (via different methods) that orbital integrals of functions in the spherical Hecke algebra has a uniform bound. While we are not able to say anything here about a non-unit element in the Hecke algebra (something might be along \cite{Ts15b}), we provide a more explicit bound for the unit element.\p

\p
\section{Stratification of affine Springer fibers}\label{secASF}

This section is independent from Section \ref{secmain}, \ref{secgeom} and \ref{secapp}, but parallel in spirit. We continue to use the notations in Section \ref{secBT}, but have $k=\bar{k}$ and $F=k((t))$ (nevertheless see Remark \ref{basefield}). We fix $\gamma\in\Lg$ non-nilpotent. Recall that $\sG_x$ is the algebraic group with $\sG_x(k)\cong G_{x,0:0+}$. Since we assume $k=\bar{k}$ we'll often identify $\sG_x$ with $G_{x,0:0+}$.\p

With Lemma \ref{JorDec} we write $\gamma=\gamma_0+\gamma'$ and put $G'=Z_G(\gamma_0)$. We discuss how we can explicitly modify a bit DeBacker's method in \cite[Sec. 2]{De02a} for the geometric setting and combine with Lemma \ref{finite} to stratify the affine Springer fiber over $\gamma$ and write each stratum as a smooth, \'{e}tale-locally trivial fibration over some slight generalizations of affine Springer fibers for $G'$ over $\gamma'$.\p

To begin with, we recall a setup of DeBacker for descent in \cite[Sec. 2.3, 2.4]{De02a}. Fix a nilpotent datum $(x,d,e)$ in Definition \ref{nildat}. Recall that we had fixed an apartment $\A$ and we assume $x\in\A$. We have an associated cocharacter $\lambda$, with which we have a decomposition $\Lg=\bigoplus_{i\in\Z}\Lg_i$ into weight spaces with respect to $\lambda$. We will also write $\Lg_{\ge j}=\bigoplus_{i\ge j}\Lg_i$, $\Lg_{\not= j}=\bigoplus_{i\not=j}\Lg_i$, etc. The Moy-Prasad lattices at any $y\in\A$ respect this decomposition; $\Lg_{y,d_0}=\bigoplus_{i\in\Z}\Lg_{y,d_0,i}$ for any $d_0$, where $\Lg_{y,d_0,i}:=\Lg_{y,d_0}\cap\Lg_i$.\p

Put $\Lg_{y,d_0:d_0+,i}:=\Lg_{y,d_0,i}/\Lg_{y,d_0+,i}$. Then we have induced canonical isomorphism $\Lg_{y,d_0:d_0+}$ $\cong\bigoplus_{i\in\Z}\Lg_{y,d_0:(d_0)+,i}$, from the definition of Moy-Prasad filtration we have canonical isomorphism $\Lg_{y,d_0:d_0+,i}\cong\Lg_{y-s\lambda,(d_0+i\cdot s):(d_0+i\cdot s)+,i}$. For any $s\in\Q$ write $x'=x-s\lambda$ and $d'=d+2s$. Then from above we have $e\in\Lg_{x,d:d+,2}\cong\Lg_{x',d':d'+,2}\subset\Lg_{x',d':d'+}$; we will think of $e$ as also in $\Lg_{x',d':d'+}$. Moreover, recall the associated cocharacter $\lambda$ was defined through an $\mathrm{sl}_2$-triple $(e,h,f)\in\Lg_{x,d:d+}\times\Lg_{x,0:0+}\times\Lg_{x,(-d):(-d)+}$. We can similarly have $(e,h,f)\in\Lg_{x',d':d'+}\times\Lg_{x',0:0+}\times\Lg_{x',(-d'):(-d')+}$. This gives us\p

\begin{lemma} For any $s\in\Q$, identify $e\in\Lg_{x-s\lambda,(d+2s):(d+2s)+}$ as above. Then $\lambda$ is also the associated cocharacter for $(x-s\lambda,d+2s,e)$.
\end{lemma}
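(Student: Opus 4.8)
The plan is to verify directly that the triple $(x-s\lambda,d+2s,e)$ satisfies all the defining properties of an associated cocharacter from Definition \ref{assococh}, namely that it arises from an $\mathfrak{sl}_2$-triple in the appropriate Moy-Prasad layers and that $\lambda$ acts with weight $2$ on $e$, weight $h$ via $d\lambda(1)$, and bounded weight on the ambient graded piece. First I would observe that the preceding paragraph has already done most of the work: it exhibits $(e,h,f)$ simultaneously in $\Lg_{x,d:d+}\times\Lg_{x,0:0+}\times\Lg_{x,(-d):(-d)+}$ and in $\Lg_{x',d':d'+}\times\Lg_{x',0:0+}\times\Lg_{x',(-d'):(-d')+}$ with $x'=x-s\lambda$, $d'=d+2s$, using the canonical isomorphisms $\Lg_{y,d_0:d_0+,i}\cong\Lg_{y-s\lambda,(d_0+is):(d_0+is)+,i}$. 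So the same $(e,h,f)$ serves as an $\mathfrak{sl}_2$-triple for the datum $(x',d',e)$, and it is still an $\mathfrak{sl}_2$-triple in the abstract Lie algebra $\Lg$ since the bracket is unchanged.

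Next I would note that $\lambda$ by construction lands in $\sS\subset\sG_x$, but the decomposition $\Lg_{x,0:0+}=\bigoplus_i\Lg_{x,0:0+,i}$ restricts to give $\sG_{x'}$ the same cocharacter: the point $x'$ lies on the line through $x$ in the direction $\lambda$, and $\Lg_{x',0:0+,i}\cong\Lg_{x,0:0+,i}$ canonically, so $\lambda$ (viewed in $X_*(\uS)$, hence in $X_*(\sS)$ for both reductive quotients, which share the same torus $\sS$) acts on $\Lg_{x',0:0+}$ with the same weights. In particular $d\lambda(1)=h\in\Lg_{x',0:0+}$ and the weight bound (weights on $\bar\Lg$ bounded by $\tfrac12\mathrm{char}(k)$) is inherited verbatim, since the weights of $\lambda$ on the various graded pieces $\Lg_{y,\ast:\ast+}$ do not depend on $y$ along the line $x+\mathbb{R}\lambda$. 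Then the weight-$2$ condition on $e$ is exactly the statement $e\in\Lg_{x',d':d'+,2}$, which is part of the identification already recorded. Finally, I would appeal to the uniqueness clause: Definition \ref{assococh} determines the associated cocharacter up to the Weyl group $N_{\sG_{x'}}(\sS)/Z_{\sG_{x'}}(\sS)$ once we know it satisfies the listed properties, so having produced $\lambda$ as such a cocharacter finishes the proof.

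I do not expect any serious obstacle; the statement is essentially a bookkeeping consequence of the isomorphisms set up immediately before it. The only point requiring a little care is the compatibility of the reductive quotients $\sG_x$ and $\sG_{x'}$ and their cocharacter lattices — one should check that $\lambda\in X_*(\sS)$ makes sense uniformly, which follows because $\sS$ is the common reduction of the single fixed split torus $\uS$ and $x,x'$ lie on a common line in $\A$, so both reductive quotients contain $\sS$ as a maximal split torus and the $\lambda$-grading is the restriction of the fixed $\Lg=\bigoplus_i\Lg_i$ grading. A secondary cosmetic point is that the cocharacter $\lambda$ we started with was only chosen up to $G_{x,0:0+}$-conjugacy and up to the relevant Weyl group; but since we are conjugating nothing here — we use the very same $\lambda$ — this ambiguity is irrelevant to the assertion, which only claims that $\lambda$ \emph{is} an associated cocharacter, not that it is the unique one.
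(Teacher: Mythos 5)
Your proposal is correct and takes essentially the same route as the paper, which offers no separate proof: the lemma is stated as an immediate consequence of the preceding identifications $\Lg_{y,d_0:d_0+,i}\cong\Lg_{y-s\lambda,(d_0+i\cdot s):(d_0+i\cdot s)+,i}$ placing $(e,h,f)$ in $\Lg_{x',d':d'+}\times\Lg_{x',0:0+}\times\Lg_{x',(-d'):(-d')+}$ and then checking the defining properties of Definition \ref{assococh} for the same $\lambda\in X_*(\sS)$, exactly as you do. One cosmetic slip: $\Lg_{x',0:0+,i}\cong\Lg_{x,0:0+,i}$ is false for $i\ne 0$ (the correct shift is $\Lg_{x,0:0+,i}\cong\Lg_{x',is:is+,i}$, and indeed the weight spaces of $\sG_{x'}$ can differ from those of $\sG_x$), but your argument only needs the $i=0$ case for $h=d\lambda(1)$ together with the fact that all weights occurring in any graded piece are weights of $\lambda$ on $\Lg$, hence bounded, so the conclusion stands.
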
\hp

For any nilpotent datum $(x,d,e)$ with associated cocharacter $\lambda$, define
%s_0(x,d,e):=\max\{s\ge0\,|\,\Lg_{x,d+}\subset\Lg_{x-s\lambda,d+2s}\}.
\begin{equation}\label{s0}
s_0(x,d,e):=\min\{s>0\,|\,\Lg_{x-s\lambda,(d+2s):(d+2s)+,<2}\not=0\}.
\end{equation}
\[s_1(x,d,e):=\min\{s>0\,|\,\Lg_{x-s\lambda,(d+2s):(d+2s)+,\not=2}\not=0\text{ or }\Lg_{x-s\lambda,0:0+,\not=0}\not=0\}.\]\p

\begin{lemma}\label{trivial} We have $s_0(x,d,e)=s_1(x,d,e)$.
\end{lemma}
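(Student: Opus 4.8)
The plan is to prove $s_1(x,d,e)\le s_0(x,d,e)$ and $s_0(x,d,e)\le s_1(x,d,e)$ separately. The first is immediate from the definitions: if $s>0$ satisfies $\Lg_{x-s\lambda,(d+2s):(d+2s)+,<2}\ne 0$, then in particular its weight-$\ne 2$ part is nonzero, so the set cut out by the $s_1$-condition contains the set cut out by the $s_0$-condition, whence $s_1\le s_0$. For the reverse inequality I would fix an arbitrary $s$ with $0<s<s_0(x,d,e)$ and show that the $s_1$-condition fails at $s$. Since $\Lg_{x-s\lambda,(d+2s):(d+2s)+,<2}=0$ by the choice of $s$, this reduces to the two vanishings
\[\Lg_{x-s\lambda,(d+2s):(d+2s)+,>2}=0\qquad\text{and}\qquad\Lg_{x-s\lambda,0:0+,\ne 0}=0.\]

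The tool is the lemma just above together with the $\Lsl$-triple at the moved point: $\lambda$ is the associated cocharacter of $(x-s\lambda,d+2s,e)$, and $(e,h,f)$ lies in degrees $d+2s$, $0$ and $-(d+2s)$ of the graded Lie algebra $\bigoplus_r\Lg_{x-s\lambda,r:r+}$, on which $\langle e,h,f\rangle$ acts by $\ad$. Thus $\ad(e)$ raises the Moy--Prasad level by $d+2s$ and the $\lambda$-weight (which equals the $\ad(h)$-eigenvalue) by $2$, while $\ad(f)$ lowers them by $d+2s$ and by $2$; this holds whatever the sign of $d+2s$. Since $\mathrm{char}(k)\gg 0$ the usual $\Lsl$-representation theory applies, in particular $\ad(e)$ is injective on weight-$\le-1$ vectors and $\ad(f)$ on weight-$\ge 1$ vectors. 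Now suppose $0\ne X\in\Lg_{x-s\lambda,0:0+,i}$ with $i\ne 0$. If $i\le -1$, then $\ad(e)X$ is a nonzero element of $\Lg_{x-s\lambda,(d+2s):(d+2s)+,i+2}$ with $i+2\le 1<2$, contradicting $s<s_0$. If $i\ge 1$, then $\ad(f)X$ is a nonzero element of $\Lg_{x-s\lambda,(-(d+2s)):(-(d+2s))+,i-2}$, and since the Killing form (nondegenerate as $\mathrm{char}(k)\gg 0$) pairs $\Lg_{x-s\lambda,r:r+,w}$ perfectly with $\Lg_{x-s\lambda,(-r):(-r)+,-w}$, this forces $\Lg_{x-s\lambda,(d+2s):(d+2s)+,2-i}\ne 0$ with $2-i\le 1<2$, again a contradiction. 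Hence the second vanishing. The first then follows: if $0\ne X\in\Lg_{x-s\lambda,(d+2s):(d+2s)+,i}$ with $i\ge 3$, then $\ad(f)X$ is a nonzero element of $\Lg_{x-s\lambda,0:0+,i-2}$ with $i-2\ne 0$, contradicting the second vanishing. This gives $s_0\le s_1$, hence equality.

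The whole content is the chain of implications just used: a weight-$>2$ piece at level $d+2s$ maps under $\ad(f)$ into a weight-$\ne 0$ piece at level $0$, and such a piece maps — directly under $\ad(e)$ if its weight is $\le -1$, or, after Killing self-duality, under $\ad(f)$ if its weight is $\ge 1$ — into a weight-$<2$ piece at level $d+2s$, which $s_0$ forbids below $s_0$. I do not expect a genuine obstacle here; the things to get exactly right are the bookkeeping of how $\ad(e)$ and $\ad(f)$ at $x-s\lambda$ shift Moy--Prasad level and $\lambda$-weight (read off from the preceding lemma, and unaffected by the sign of $d+2s$), the validity of the Moy--Prasad self-duality and of the $\Lsl$ facts in the assumed characteristic, and the degenerate case $e=0$, in which $\lambda=0$ and the two conditions coincide tautologically.
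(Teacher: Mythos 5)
Your argument is correct and is essentially the paper's proof in contrapositive form: the paper likewise places the $\Lsl$-triple $(e,h,f)$ in levels $d+2s$, $0$, $-(d+2s)$ (working inside the finite-dimensional $\bar{\Lg}$ via Lemma \ref{RY}) and uses injectivity/surjectivity of $\ad(e)$ and $\ad(f)$ on the relevant weight spaces to transfer nonvanishing between the level-$0$ piece and the level-$(d+2s)$ piece. The only small variation is your treatment of a positive-weight vector at level $0$, which passes through level $-(d+2s)$ and invokes Moy--Prasad/Killing duality, whereas the paper uses the weight symmetry of $\Lg_{x-s\lambda,0:0+}$ to reduce to negative weight and then applies $\ad(e)$; both steps are valid under the running hypotheses.
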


%First we prove that $s_0(x,d,e)=s_1(x,d,e)$. Recall that $\Lg_{x,d+,i}\cong\Lg_{x-s\lambda,(d+i\cdot s)+,i}$. Thus for $s>0$ we have $\Lg_{x,d+,i}\subset\Lg_{x-s\lambda,d+2s,i}$ for $i\ge 2$. On the other hand, for $i<2$ we have $\Lg_{x,d+,i}\subset\Lg_{x-s\lambda,d+2s,i}$ if and only if $\Lg_{x-s\lambda,(d+i\cdot s)+,i}=\Lg_{x-s\lambda,d+2s,i}$, which is exactly true only for those positive $s\le s_1(x,d,e)$.\p

\begin{proof} 
Write $s=s_1(x,d,e)$, $x'=x-s\lambda$ and $d'=d+2s$. It suffices to show that we have $\Lg_{x',d':d'+,i}\not=0$ for some $i<2$. By Lemma \ref{RY}, we can embed $\Lg_{x',d':d'+}\subset\bar{\Lg}$ so that $\bar{\Lg}$ is equipped with some finite order automorphism $\theta$ with which $\Lg_{x',d':d'+}=\bar{\Lg}^{\theta=\zeta}$ is the $\zeta$-eigenspace for some root of unity $\zeta$. Also $\Lg_{x',0:0+}=\bar{\Lg}^{\theta=1}$. Nevertheless, $\lambda\in X_*(\uS)\cong X_*(\sS)$ still give compatible decomposition $\bar{\Lg}=\bigoplus_{i\in\Z}\bar{\Lg}_i$ (this is because $\sS\subset\sG_x\cong\bar{G}^{\theta}$; see the paragraph before Definition \ref{assococh}).\p

What we have to prove is that $\bar{\Lg}^{\theta=\zeta}_i\not=0$ for some $i<2$, given the condition either $\bar{\Lg}^{\theta=\zeta}_i\not=0$ for some $i\not=2$ or $\bar{\Lg}^{\theta=1}_i\not=0$ for some $i\not=0$. The key is that $e\in\bar{\Lg}^{\theta=\zeta}_2$ and $(e,h,f)$ is an $\mathfrak{sl}_2$-triple in $\bar{\Lg}$. Thus for $i\ge 2$, $\ad(e):\bar{\Lg}^{\theta=1}_{i-2}\ra\bar{\Lg}^{\theta=\zeta}_i$ is surjective. We may then assume $\bar{\Lg}^{\theta=1}_i\not=0$ for some $i\not=0$. Since $\bar{\Lg}^{\theta=1}=\Lg_{x',0:0+}$, we can even assume $i<0$. But then using the injectivity of $\ad(e):\bar{\Lg}^{\theta=1}_i\ra\bar{\Lg}^{\theta=\zeta}_{i+2}$ we have $\bar{\Lg}^{\theta=\zeta}_{i+2}\not=0$ with $i+2<2$.
\end{proof}\hp

%The idea is that in the process of ``descent'' in Theorem \ref{Amherst}(ii) below, we will go from $(x,d,e)$ to $(x',d',e)$ with $s=s_0(x,d,e)$, $x'=x-s\lambda$ and $d'=d+2s$.

We now proceed to affine Springer fibers. We define some variants as below.\p

\begin{definition}\label{genASF}
For any nilpotent datum $(x,d,e)$, we put $\X_{x,d,e}$ to be the reduced sub(ind-)scheme of relevant (generalized) affine flag variety such that
\[\X_{x,d,e}(k)=\{g\in G/G_x\,|\,\ad(g^{-1})\gamma\in\Lg_{x,d}\text{ and its image in }\Lg_{x,d:d+}\text{ is in the orbit of }e\}.\]

And we put $\Y_{x,d,e}$ to be the scheme with likewise scheme structure such that
\[\Y_{x,d,e}(k)=\{g\in G/K\,|\,\ad(g^{-1})\gamma\in e+\Lg_{x,d:d+,\le 1}+\Lg_{x,d+}\}\]

where $K:=\{g\in G_x\,|\,\text{image of }g\text{ in }\sG_x\text{ is in }Z_{\sG_x}(f)\sG_{x,<0}\}$.
\end{definition}\hp

Before stating our main theorem, we remark that as $\Lg_{x,0:0+}=\bigoplus_{i\in\Z}\Lg_{x,0:0+,i}$ and $\sG_x\cong G_{x,0:0+}$, we can define parabolic subgroups $\sG_{x,\ge0}\subset\sG_x$ by $\Lie\sG_{x,\ge0}\cong\Lg_{x,0:0+,\ge 0}$. In this way $\sG_{x,>0}$ will be its unipotent radical and $\sG_{x,0}$ its Levi subgroup, and similar for $\sG_{x,\le 0}$ and $\sG_{x,<0}$.\p

\begin{theorem}\label{Amherst} For any nilpotent datum $(x,d,e)$, we have the following results for $\X_{x,d,e}$ and $\Y_{x,d,e}$:\hp

(i)$\;\;$ $\X_{x,d,e}=\emptyset$ when $d\ge\depth(\gamma)$ and $\Y_{x,d,e}=\emptyset$ when $d>\depth(\gamma)$.\hp

(ii)$\;$ When $d<\depth(\gamma)$, put $s:=s_0(x,d,e)$ as in (\ref{s0}), $x'=x-s\lambda$, $d'=d+2s$. Then $\X_{x,d,e}\cong\Y_{x',d',e}$.\hp

(iii) When $d<\depth(\gamma)$, we can stratify $\Y_{x,d,e}$ into finitely many strata, indexed by nilpotent ($\sG_x$-)orbits in $\Lg_{x,d:d+}$ with representatives $e'$, so that each stratum is an \'{e}tale-locally trivial fiber bundle over $\X_{x,d,e'}$ with fiber
\[\CH_{(x,d,e)}(e')=\{g\in\sG_{x}/Z_{\sG_{x}}(f)\cdot\sG_{x,<0}\,|\,\ad(g^{-1})e'\in e+\Lg_{x,d:d+,\le 1}\}.\]\hp

(iv) When $d=\depth(\gamma)$, we can stratify $\Y_{x,d,e}$ into finitely many strata, indexed by $\{h\in G'\bsl G/G_x\,|\,h\cdot x\in\B_{G'}\}$ and nilpotent orbits with representative $e'\in\Lg_{h\cdot x,d:d+}'$, so that each stratum is an \'{e}tale-locally trivial fiber bundle over $\X_{h\cdot x,d,e'}^{G'}$. Let $\bar{\gamma}_0$ be the image of $\gamma_0$ in $\Lg_{h\cdot x,d:d+}$. We can put $\bar{\gamma}:=\ad(h^{-1})(\bar{\gamma}_0+e')\in\Lg_{x,d:d+}$.
Then the fiber is
\[\CH_{(x,d,e)}(\bar{\gamma})=\{g\in\sG_x/Z_{\sG_{x}}(f)\cdot\sG_{x,<0}\,|\,\ad(g^{-1})\bar{\gamma}\in e+\Lg_{x,d:d+,\le 1}\}.\]
\end{theorem}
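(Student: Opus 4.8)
The plan is to geometrize DeBacker's descent in \cite[Sec.~2.3--2.4]{De02a}: each coset computation there becomes an identity about $k$-points of $\X_{x,d,e}$ and $\Y_{x,d,e}$, and one checks that the resulting bijections are induced by morphisms of the ambient (generalized) affine flag schemes, hence isomorphisms after passing to reduced structures. I would treat the four parts in order, noting that (iii) for $G$ feeds the base $\X_{x,d,e'}$ of its fibration back into (ii) at the strictly larger depth $d'=d+2s$, while (iv) feeds it into part (iii) for the twisted Levi $G'$ at depth $d<\depth(\gamma')$; the recursion terminates since each application of (ii) strictly increases the depth and each application of (iv) passes to a proper sub-datum of $\gamma$.

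\emph{Parts (i) and (ii).} Part~(i) is a depth computation: $\depth$ is conjugation-invariant and equals $\depth(\gamma)$ for every $\ad(g^{-1})\gamma$, so $\ad(g^{-1})\gamma\notin\Lg_{x,d}$ once $d>\depth(\gamma)$, which kills both schemes; and for the remaining case $d=\depth(\gamma)$ of $\X_{x,d,e}$ one uses that for an element of depth exactly $d$ lying in $\Lg_{x,d}$ the image in $\Lg_{x,d:d+}$ is never nilpotent — its nonzero semisimple part is the image of the good component $\gamma_0$ of Lemma~\ref{JorDec} — so it cannot lie in the orbit of $e$. For Part~(ii) I would move $x$ along the geodesic to $x'=x-s\lambda$ and rewrite the lattice-and-orbit condition cutting out $\X_{x,d,e}$ as the affine-subspace condition cutting out $\Y_{x',d',e}$, using the canonical isomorphisms $\Lg_{y,d_0:d_0+,i}\cong\Lg_{y-s\lambda,(d_0+is):(d_0+is)+,i}$ of Section~\ref{secBT}. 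The content of the choice $s=s_0(x,d,e)$, together with the identification $s_0=s_1$ of Lemma~\ref{trivial}, is precisely that for $0<s'<s$ the Moy--Prasad data at $x-s'\lambda$ has no new weight-$\ne 2$ part in degree $(d+2s'){:}(d+2s')+$ and no new weight-$\ne 0$ part in degree $0{:}0+$, which is exactly what is needed for the quotient $G_x$ on the $\X$-side to correspond to the subgroup $K'$ built from $f$ at $x'$ on the $\Y$-side; the resulting $k$-point bijection is visibly induced by a morphism of generalized affine flag schemes.

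\emph{Part (iii).} For $d<\depth(\gamma)$ the condition defining $\Y_{x,d,e}$ forces $\ad(g^{-1})\gamma\in\Lg_{x,d}$, and since $\depth(\gamma)>d$ its image $e'$ in $\Lg_{x,d:d+}$ is nilpotent and lies in $e+\Lg_{x,d:d+,\le 1}$. There are finitely many nilpotent $\sG_x$-orbits, and ``the image lies in the orbit of $e'$'' is a constructible, locally closed condition with closure order given by orbit closures, so partitioning by this orbit yields a stratification. On a stratum, the projection $G/K\to G/G_x$ lands in $\X_{x,d,e'}$; over a point $h\in\X_{x,d,e'}$, after rigidifying the image of $\ad(h^{-1})\gamma$ in $\Lg_{x,d:d+}$ to the fixed representative $e'$ — possible étale-locally on $\X_{x,d,e'}$, hence the étale-local triviality — the fibre is exactly $\{\bar g\in\sG_x/Z_{\sG_x}(f)\sG_{x,<0}\,:\,\ad(\bar g^{-1})e'\in e+\Lg_{x,d:d+,\le 1}\}=\CH_{(x,d,e)}(e')$; here the $\Lg_{x,d+}$-part of the defining condition of $\Y_{x,d,e}$ is automatic, since an element of $\Lg_{x,d}$ whose image already lies in $e+\Lg_{x,d:d+,\le 1}$ lies in $e+\Lg_{x,d:d+,\le 1}+\Lg_{x,d+}$. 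Smoothness of the fibration (and its iterated structure) is then as in the Hessenberg-variety discussion of Section~\ref{secgeom}.

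\emph{Part (iv) and the main obstacle.} For $d=\depth(\gamma)$ the image of $\ad(g^{-1})\gamma$ in $\Lg_{x,d:d+}$ is no longer nilpotent, so I would first apply Lemma~\ref{finite}: for $g\in\Y_{x,d,e}(k)$ we have $g x\in\B_{G'}$ by Lemma~\ref{KM}, and $G'gG_x$ ranges over the finitely many double cosets in $G'\backslash G/G_x$ meeting $\B_{G'}$, cut out by $W_x$; crucially the full equivalence in Lemma~\ref{finite} (for $\gamma_0$), not merely the implication for $\gamma$, is what makes these pieces locally closed rather than a mere set-theoretic partition. On the piece indexed by $h$, writing $g=g'hg_x$ with $g'\in G'$ and $g_x\in G_x$, the assignment $g\mapsto g'G'_{h x}$ lands in the twisted-Levi affine Springer fibre $\X^{G'}_{h\cdot x,d,e'}$ — this is where the inductive input enters — with $e'$ the nilpotent orbit in $\Lg'_{h\cdot x,d:d+}$ of the image of the $\gamma'$-part, nilpotent by Part~(iii) for $G'$ since $\depth(\gamma')>d$; running the same rigidification as in (iii) identifies the fibre with $\CH_{(x,d,e)}(\bar\gamma)$ for $\bar\gamma=\ad(h^{-1})(\bar\gamma_0+e')$, which assembles the image $\bar\gamma_0$ of $\gamma_0$ with the nilpotent $e'$ produced on the $G'$-side. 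I expect the technical heart to be Part~(ii): confirming that DeBacker's coset descent comes from an honest morphism of (ind-)schemes and is an isomorphism after reduction, which rests entirely on the equality $s_0=s_1$ matching $G_x$ with $K'$. A secondary difficulty, recurring in (iii) and (iv), is that the fibrations are only étale-locally — not Zariski-locally — trivial, so the nilpotent reduction must be rigidified over an étale cover, and in (iv) one genuinely needs both directions of Lemma~\ref{finite} to see that the double-coset pieces are locally closed.
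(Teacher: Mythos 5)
Your overall route is the same as the paper's: geometrize DeBacker's descent, prove (ii) by sliding $x$ to $x-s_0\lambda$ and using $s_0=s_1$ to match the lattice condition and the structure group, prove (iii) by stratifying the underlying affine Springer fiber $\X_{x,d}$ by the nilpotent orbit of the reduction and rigidifying that orbit \'etale-locally, and prove (iv) by combining Lemma \ref{KM} with Lemma \ref{finite} to cut into finitely many double-coset pieces mapping to $\X^{G'}_{h\cdot x,d}$ over $\gamma'$. Two points, however, deserve attention. First, the statement you use without proof in (iii) --- that if $\ad(g^{-1})\gamma\in\Lg_{x,d}$ with $d<\depth(\gamma)$ then its image in $\Lg_{x,d:d+}$ is nilpotent --- is not a formal consequence of $\depth(\gamma)>d$; it is a genuine claim, and the paper proves it separately (after a tame base change and rescaling reducing to $x$ hyperspecial and $d=0$, one shows every $\sG_x$-invariant polynomial takes values in $\pi_F\CO_F$ at $\gamma$, so the reduction lies in the nilpotent cone). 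Your proposal should either reproduce such an argument or cite it (it is ``more or less'' in Adler--DeBacker); as written this is the one substantive gap. Note also that your argument for the $d=\depth(\gamma)$ case of (i) silently relies on the same claim: to know that the semisimple part of the reduction of $\ad(g^{-1})\gamma$ is the (nonzero) image of $\gamma_0$, you need $g\cdot x\in\B_{G'}$ (Lemma \ref{KM}) \emph{and} that the reduction of $\gamma'$ is nilpotent, i.e. the very claim above applied to $\gamma'$. The paper instead argues (i) self-containedly: if the reduction $e'$ were nilpotent, its associated cocharacter $\lambda'$ would move $\ad(g^{-1})\gamma$ into a lattice of depth strictly larger than $d$, contradicting $d=\depth(\gamma)$. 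Your variant is fine, but only once the nilpotence claim is actually established.

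Second, in (ii) the phrase ``rewrite the lattice-and-orbit condition as the affine-subspace condition'' compresses the actual mechanism: one first replaces ``reduction in the orbit of $e$'' by ``reduction equal to $e$'' at the cost of shrinking $G_x$ to the preimage of $Z_{\sG_x}(e)$, then relaxes $e$ to $e+\Lg_{x,d:d+,\ge 3}$ using the $\mathfrak{sl}_2$ fact that every element of $e+\Lg_{x,d:d+,\ge 3}$ is conjugate to $e$ by a unique element of $\sG_{x,>0}$ modulo the centralizer, and only then does $s_0=s_1$ identify $e+\Lg_{x,d:d+,\ge3}+\Lg_{x,d+}$ with $e+\Lg_{x',d':d'+,\le1}+\Lg_{x',d'+}$ and the preimage of $Z_{\sG_x}(e)\sG_{x,>0}$ in $G_x$ with the preimage of $Z_{\sG_{x'}}(f)\sG_{x',<0}$ in $G_{x'}$ (via $Z_{\sG_x}(e)=Z_{\sG_x}(e,h,f)\cdot Z_{\sG_{x,>0}}(e)$). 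You correctly identify $s_0=s_1$ as the hinge, but the $\mathfrak{sl}_2$-rigidification step is where the orbit condition actually becomes the affine-subspace condition and should be spelled out. Your treatment of the \'etale-local triviality in (iii)--(iv) matches the paper's Proposition \ref{fibra}, and your remark that the double-coset pieces in (iv) use the full equivalence of Lemma \ref{finite} is consistent with the paper, which obtains them as disconnected pieces.
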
\hp

\begin{proof} (i) This follows because when $d>\depth(\gamma)=\depth(\ad(g^{-1})\gamma)$, we cannot have $\ad(g^{-1})\gamma\in\Lg_{x,d}$. When $d=\depth(\gamma)$, even if $\ad(g^{-1})\gamma\in\Lg_{x,d}$, its image in $\Lg_{x,d:d+}$ cannot be nilpotent, for if $\ad(g^{-1})\gamma$ has image $e'\in\Lg_{x,d:d+}$ nilpotent, by putting $\lambda'$ to be the associated cocharacter of $(x,d,e')$, we have $\ad(g^{-1})\gamma\in\Lg_{x+\epsilon\lambda',d-2\epsilon}$ for small positive $\epsilon$.\p

(ii) Let us put $K_1:=\{g\in G_x\,|\,\text{image of }g\text{ in }\sG_x\text{ is in }Z_{\sG_x}(e)\}$ and also $K_2:=\{g\in G_x\,|\,\text{image of }g\text{ in }\sG_x\text{ is in }Z_{\sG_x}(e)\cdot(\sG_x)_{>0}\}$.
Then
\[\X_{x,d,e}(k)=\{g\in G/K_1\,|\,\ad(g^{-1})\gamma\in e+\Lg_{x,d+}\}\]
\[\cong\{g\in G/K_2\,|\,\ad(g^{-1})\gamma\in e+\Lg_{x,d:d+,\ge 3}+\Lg_{x,d+}\}.\]\hp

The latter isomorphism follows because for every $\eta\in e+\Lg_{x,d:d+,\ge 3}\subset\Lg_{x,d:d+}$, there exists a unique $\bar{g}\in (\sG_x)_{>0}\cdot Z_{\sG_x}(e)/Z_{\sG_x}(e)$ such that $\ad(\bar{g}^{-1})\eta=e\in\Lg_{x,d:d+}$; this is a property that holds for any $\Lsl$-triple $(e,h,f)$.\p

Using in Lemma \ref{trivial} that $s=s_0(x,d,e)=s_1(x,d,e)$ we have \begin{equation}\label{id1}e+\Lg_{x,d:d+,\ge 3}+\Lg_{x,d+}=e+\Lg_{x',d':d'+,\le 1}+\Lg_{x',d'+}\subset\Lg_{x,d}\cap\Lg_{x',d'}
\end{equation}
and also
\[K_2=\{g\in G_x\,|\,\text{image of }g\text{ in }G_{x,0:0+}\text{ is in }Z_{\sG_x}(e)\cdot\sG_{x,>0}\}\]
\[\cong\{g\in G_x\,|\,\text{image of }g\text{ in }G_{x,0:0+}\text{ is in }Z_{\sG_x}(e,h,f)\cdot\sG_{x,>0}\}\]
\[\cong\{g\in G_{x'}\,|\,\text{image of }g\text{ in }G_{x',0:0+}\text{ is in }Z_{\sG_{x'}}(e,h,f)\cdot\sG_{x',<0}\}\]
\vskip -.55cm
\begin{equation}\label{id2}
\cong\{g\in G_{x'}\,|\,\text{image of }g\text{ in }G_{x',0:0+}\text{ is in }Z_{\sG_{x'}}(f)\cdot\sG_{x',<0}\}.
\end{equation}\hp

We explain the above three isomorphisms. For the first isomorphism, note that by the property of $\mathfrak{sl}_2$-triples, $e$ can only be centralized by elements of non-negative weights; $Z_{\sG_x}(e)=Z_{\sG_{x,0}}(e)\cdot Z_{\sG_{x,>0}}(e)$. But by definition $Z_{\sG_{x,0}}(e)=Z_{\sG_x}(e,h)$, and $Z_{\sG_x}(e,h)=Z_{\sG_x}(e,h,f)$ since $e$ and $h$ determine $f$ for an $\mathfrak{sl}_2$-triple. The third isomorphism follows similarly. For the second isomorphism, note that since $s=s_0(x,d,e)=s_1(x,d,e)$, we have $\Lg_{x-s'\lambda,0:0+\not=0}=0$ for all $0<s'<s$. Consequently the preimage of $\sG_{x,>0}$ in $G_x$ is the same as the preimage of $\sG_{x',<0}$ in $G_{x'}$ for $x'=x-s\lambda$.\p

In conclusion, by using (\ref{id1}) and (\ref{id2}) we have
\[\X_{x,d,e}(k)=\{g\in G/K_2\,|\,\ad(g^{-1})\gamma\in e+\Lg_{x',d':d',\le 1}+\Lg_{x',d'+}\}=\Y_{x',d',e}(k).\]\p

(iii) Put $\X_{x,d}$ the usual affine Springer fiber with $\X_{x,d}(k)=\{g\in G/G_x\,|\,\ad(g^{-1})\gamma\in\Lg_{x,d}\}$. There is natural map $\Y_{x,d,e}\ra\X_{x,d}$. We claim that the latter can be stratified into locally closed strata $\X_{x,d,e'}$, where $e'$ run over nilpotent orbits in $\Lg_{x,d:d+}$. The claim is equivalent to saying that if $\ad(g^{-1})\gamma\in\Lg_{x,d}$, then its image in $\Lg_{x,d:d+}$ has to be nilpotent. This is more or less in \cite[Sec. 3.2]{AD02}. Nevertheless I learn the following proof from Yiannis Sakellaridis:\hp

\begin{proof}[Proof of claim]
By definition $\gamma\in\Lg_{x',\depth(\gamma)}$ for some $x'\in\B_G$. After a tame base change we may assume $x$ and $x'$ are hyperspecial, and $d$ and $\depth(\gamma)$ are integers. By scaling we may assume $d=0$, so $\depth(\gamma)\in\Z_{>0}$. We may even furthermore assume that $x$ and $x'$ are in the same $G$-orbit, so without loss of generality we may assume $x'=x$.\hp

Let $\bar{A}:=k[\Lg_{x,0:0+}]^{\sG_x}$ be the ring of invariant functions on $\Lg_{x,0:0+}$. Then we can identify $A:=\bar{A}\otimes_kF$ as the ring of invariant functions on $\Lie\uG$ and $A_0:=\bar{A}\otimes_k\CO_F$ the subring of invariant functions on $\Lg_{x,0}$ as a scheme over $\CO_F$. Since $\gamma\in\Lg_{x,\depth(\gamma)}\subset\Lg_{x,1}$, all $f\in A_0$ have to take values in $\pi_F\CO_F$ at $\gamma$. But this implies that all $\bar{f}\in\bar{A}$ takes zero value on the image of $\ad(g^{-1})\gamma$ in $\bar{\Lg}=\Lg_{x,d:d+}$, i.e. the image of $\ad(g^{-1})\gamma$ in $\Lg_{x,d:d+}$ is nilpotent.
\end{proof}\hp

The stratification on $\Y_{x,d,e}$ is then obtained by pulling back, and one checks that the fibers of $\Y_{x,d,e}\ra\X_{x,d}$ on each stratum $\X_{x,d,e'}$ are as asserted. It remains to show that this is an \'{e}tale-locally trivial fibration, which we left to Proposition \ref{fibra} latter in this section.\p

(iv) When $d=\depth(\gamma)$, by Lemma \ref{KM}, $\ad(g^{-1})\gamma\in\Lg_{x,d}\Leftrightarrow\gamma\in\Lg_{g\cdot x,d}\Rightarrow g\cdot x\in\B_{G'}$. Hence such $g$ lies in some double cosets among $\{h\in G'\bsl G/G_x\,|\,h\cdot x\in\B_{G'}\}$. This is finite by Lemma \ref{finite}. It was also shown that representatives of $h$ can be chosen in $N_G(S)$. Fix a choice of such representatives.\p

The scheme $\Y_{x,d,e}$ can thus be decomposed into finitely many $[\Y_{x,d,e}]_h$, so that different parts are disconnected, by putting 
\[[\Y_{x,d,e}]_h(k)=\{g\in G'\cdot h\cdot G_x/K\,|\,\ad(g^{-1})\gamma\in e+\Lg_{x,d:d+,\le 1}+\Lg_{x,d+}\}\]

where $K\subset G_x$ consist of those elements whose image in $\sG_x$ are in $Z_{\sG_x}(f)\cdot(\sG_x)_{<0}$. Note that $G'\cdot h\cdot G_x=G'\cdot G_{h\cdot x}\cdot h$, and we have composition of maps \[\pi:G'\cdot h\cdot G_x/K\twoheadrightarrow G'\cdot h\cdot G_x/G_x\cong G'\cdot G_{h\cdot x}/G_{h\cdot x}\cong G'/G_{h\cdot x}'.\]\hp

From the definition, we similarly verify $\ad(g^{-1})\gamma\in\Lg_{x,d}\Leftrightarrow\ad(\pi(g)^{-1})\gamma\in\Lg_{h\cdot x,d}'\Leftrightarrow\ad(\pi(g)^{-1})\gamma'\in\Lg_{h\cdot x,d}'$. The last equivalence is because $\gamma_0$ is in the center of $\Lg'$. This gives a natural map from $[\Y_{x,d,e}]_h$ to $\X_{h\cdot x,d}^{G'}$, where the latter is an affine Springer fiber for $G'$ over $\gamma'$. Nevertheless $\X_{h\cdot x,d}^{G'}$ can be stratified into $\X_{h\cdot x,d,e'}^{G'}$ indexed by nilpotent orbits $e'\in\Lg_{h\cdot x,d:d+}'$, and one similarly checks that the natural map from $[\Y_{x,d,e}]_h\ra\X_{h\cdot x,d}^{G'}$ restricted to stratum $\X_{h\cdot x,d,e'}^{G'}$ has the asserted fiber. We are thus done with Proposition \ref{fibra} below.\end{proof}\hp

%\begin{lemma}\label{Zariski} Let $G$ be an affine algebraic group over $k$ and $H$ be a connected subgroup. Then the natural projection $G\ra G/H$ is Zariski-locally trivial.
%\end{lemma}

\begin{lemma}\label{etale} Let $G$ be an affine algebraic group over $k$ and $H$ be a subgroup. Then the natural projection $G\ra G/H$ is \'{e}tale-locally trivial.
\end{lemma}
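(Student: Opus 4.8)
The plan is to recognize $\pi\colon G\to G/H$ as a torsor under the smooth group $H$ and then quote the fact that such torsors trivialize étale-locally. First I would recall the basic structure: the fppf quotient $G/H$ exists as a quasi-projective $k$-scheme, the projection $\pi$ is faithfully flat of finite type, and the shear map $G\times H\to G\times_{G/H}G$, $(g,h)\mapsto(g,gh)$, is an isomorphism; in other words $\pi$ is an fppf $H$-torsor over $G/H$.

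Next I would promote flatness of $\pi$ to smoothness. Since $\mathrm{char}(k)\gg1$ and $k=\bar k$, the subgroup $H$ is smooth. Pulling $\pi$ back along itself yields the first projection $G\times_{G/H}G\cong G\times H\to G$, which is smooth because $H$ is; as $\pi$ is an fppf cover of $G/H$, smoothness descends, so $\pi$ is smooth and surjective.

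Finally I would invoke the standard fact (EGA IV, 17.16.3) that a smooth surjective morphism of finite type admits a section after étale base change on the target: for each point $y\in G/H$ there is an étale morphism $U\to G/H$ hitting $y$ together with a section $s\colon U\to G\times_{G/H}U$. For such a $U$ the morphism $U\times H\to G\times_{G/H}U$ given by $(u,h)\mapsto s(u)\cdot h$ is an isomorphism commuting with the $H$-action and the projection to $U$, which is precisely the desired local trivialization; translating by elements of $G$ (and using quasi-compactness of $G/H$) then covers all of $G/H$ by such étale opens. The one point requiring care — rather than a genuine obstacle — is the smoothness of $H$, which is where the large-characteristic hypothesis enters; everything else is formal torsor theory.
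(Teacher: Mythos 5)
Your argument is correct, and in fact the paper offers no proof of this lemma at all: it is quoted as a standard fact and immediately used in Proposition \ref{fibra}. Your torsor-theoretic reasoning is exactly the standard justification: $G\to G/H$ is a faithfully flat morphism with $G\times_{G/H}G\cong G\times H$, smoothness of $H$ upgrades it to a smooth surjection, and smooth surjections admit sections \'etale-locally, which trivializes the $H$-torsor. One small correction of emphasis: the smoothness of $H$ is not really where the large-characteristic hypothesis enters. Over the perfect (indeed algebraically closed) field $k$, any reduced closed subgroup scheme is automatically smooth, and ``subgroup of an algebraic group'' in the classical sense used here means precisely such a reduced subgroup; the hypothesis that genuinely matters is that $H$ is not a non-reduced subgroup scheme (for $H=\alpha_p$ or $\mu_p$ in characteristic $p$ the projection is purely inseparable and the statement fails). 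In the paper's applications $H$ is a group like $Z_{\sG_x}(f)\cdot\sG_{x,<0}$ or $Z_{\sG_x}(e)$, which are reduced (and smooth) under the standing assumption $\mathrm{char}(k)\gg1$, so your proof applies verbatim. Also, the closing step about translating by elements of $G$ and quasi-compactness is unnecessary: the \'etale neighborhoods produced around each point of $G/H$ already assemble into an \'etale cover over which the torsor is trivial, which is all that \'etale-local triviality asks for.
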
\hp

\begin{proposition}\label{fibra}
The maps $\Y_{x,d,e}\ra\X_{x,d}$ in Theorem \ref{Amherst}(iii) and $[\Y_{x,d,e}]_h\ra\X_{h\cdot x,d}^{G'}$ in \ref{Amherst}(iv), when restricted to $\X_{x,d,e'}$ and $\X_{h\cdot x,d,e'}^{G'}$ respectively, is a \'{e}tale-locally trivial fibration.
\end{proposition}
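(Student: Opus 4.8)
The plan is to establish both statements by the same mechanism; I describe case (iii) in detail and then indicate the modifications for (iv). The map in question is the restriction, over the locally closed stratum $\X_{x,d,e'}\subseteq\X_{x,d}\subseteq G/G_x$, of the projection $G/K\to G/G_x$. Since \'{e}tale-local triviality is local on the target and affine Springer fibers are locally of finite type over $k$, I may work over a quasi-compact open piece of $\X_{x,d,e'}$. The first step is a reduction to finite-dimensional algebraic groups: the subgroup $K$ of Definition \ref{genASF} contains $G_{x,0+}$, hence contains $G_{x,N}$ for all $N\gg 0$, and the conditions defining $\X_{x,d,e'}$ and $\Y_{x,d,e}$ involve a conjugate $\ad(g^{-1})\gamma$ only through its class modulo $\Lg_{x,d+}$ together with the bounded closed requirement of lying in $\Lg_{x,d}$; so for $N$ large everything factors through $G/G_{x,N}$, the restriction to $\X_{x,d,e'}$ of $G/G_{x,N}\to G/G_x$ is a torsor under the smooth affine algebraic $k$-group $\bar P:=G_x/G_{x,N}$, and $\Y_{x,d,e}=(G/G_{x,N})/\bar K$ for $\bar K:=K/G_{x,N}$, with $\bar P/\bar K\cong\sG_x/(Z_{\sG_x}(f)\cdot\sG_{x,<0})$.

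By the same principle as Lemma \ref{etale} (a torsor under a smooth affine $k$-group is a smooth morphism and hence \'{e}tale-locally trivial), I may choose an \'{e}tale neighbourhood $U\to\X_{x,d,e'}$ together with a section $\sigma$ of $G/G_{x,N}|_U\to U$; this gives a trivialization $G/G_{x,N}|_U\cong U\times\bar P$, hence $G/K|_U\cong U\times\bigl(\sG_x/(Z_{\sG_x}(f)\cdot\sG_{x,<0})\bigr)$, and under the latter identification $\Y_{x,d,e}|_U$ becomes the locally closed subscheme
\[\bigl\{\,(u,\ \bar h\bmod Z_{\sG_x}(f)\cdot\sG_{x,<0})\ :\ \ad(\bar h^{-1})\,\bar e'(u)\in e+\Lg_{x,d:d+,\le 1}\,\bigr\},\]
where $\bar e'\colon U\to\Lg_{x,d:d+}$ is the morphism sending $u$ to the image modulo $\Lg_{x,d+}$ of $\ad(\sigma(u)^{-1})\gamma$; it lands in the single nilpotent orbit $\sG_x\cdot e'$ precisely because $U$ maps into the stratum $\X_{x,d,e'}$.

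Now I apply Lemma \ref{etale} once more, to the orbit map $\sG_x\to\sG_x\cdot e'\cong\sG_x/Z_{\sG_x}(e')$: after shrinking $U$ \'{e}tale-locally, $\bar e'$ lifts to a morphism $\bar\tau\colon U\to\sG_x$ with $\bar e'(u)=\ad(\bar\tau(u))e'$. Since $Z_{\sG_x}(f)\cdot\sG_{x,<0}$ occurs above as a right subgroup, the left translation $(u,v)\mapsto(u,\bar\tau(u)^{-1}v)$ is an automorphism of $U\times\bigl(\sG_x/(Z_{\sG_x}(f)\cdot\sG_{x,<0})\bigr)$, and from $\ad(\bar h^{-1})\ad(\bar\tau(u))e'=\ad((\bar\tau(u)^{-1}\bar h)^{-1})e'$ this automorphism carries the subscheme above isomorphically onto $U\times\CH_{(x,d,e)}(e')$; this settles (iii). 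For (iv) the argument runs identically once one builds, in the first two steps, the section $\sigma$ over $\X_{h\cdot x,d,e'}^{G'}$ from the analogous $G'$-torsor $G'/G_{h\cdot x,N}'\to G'/G_{h\cdot x}'$ and restricts the source throughout to the double coset $G'\cdot h\cdot G_x$; one then uses $\gamma=\gamma_0+\gamma'$ with $\gamma_0$ central in $\Lg'$ — so that its image contributes the vector $\bar\gamma_0$, fixed under the adjoint action of the reductive quotient of $G'$ at $h\cdot x$, while the chosen $h\in N_G(S)$ transports via $\ad(h^{-1})$ between the reductive quotients at $h\cdot x$ and at $x$ — to rewrite the subscheme cutting out $[\Y_{x,d,e}]_h|_U$ in terms of $\bar\gamma=\ad(h^{-1})(\bar\gamma_0+e')$, turning the fibre into $\CH_{(x,d,e)}(\bar\gamma)$ as in Theorem \ref{Amherst}(iv).

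The main obstacle is not any single step but the bookkeeping in the first reduction: one must verify carefully that over a quasi-compact piece of the affine Springer fiber the loop-group torsor $G\to G/G_x$ may be replaced by the finite-type torsor $G/G_{x,N}\to G/G_x$, that $N$ can be chosen with $G_{x,N}\subseteq K$ and with all scheme structures as well as the condition $\ad(g^{-1})\gamma\in\Lg_{x,d}$ factoring through $G/G_{x,N}$, and that the resulting identification $\bar P/\bar K\cong\sG_x/(Z_{\sG_x}(f)\cdot\sG_{x,<0})$ is the one implicit in Definition \ref{genASF}. Once this is in place, everything reduces to a formal combination of Lemma \ref{etale} with the $\Lsl$-theory already used in the proof of Theorem \ref{Amherst}.
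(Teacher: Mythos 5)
Your proposal is correct and follows essentially the same route as the paper: an \'etale-local section of the flag-variety torsor over the stratum (the paper works with $\CZ_x$, where $\CZ_x(k)=G/G_{x,0+}$, and invokes Lemma \ref{etale} via affine Schubert cells, while you repackage this as a reduction to the finite-type torsor $G/G_{x,N}\ra G/G_x$), followed by a second application of Lemma \ref{etale} to the orbit map $\sG_x\ra\sG_x\cdot e'$ and a translation of the trivialization by the lifted section --- exactly the paper's replacement $s''=s\cdot\left((s'\circ\pi\circ s)\right)^{-1}$ --- which makes the defining condition constant and identifies the fiber with $\CH_{(x,d,e)}(e')$ (resp. $\CH_{(x,d,e)}(\bar{\gamma})$).
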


\begin{proof} We will prove the case for $\Y_{x,d,e}\ra\X_{x,d}$ restricted to $\X_{x,d,e'}$; the proof in the other case is identical. Put $\CZ_x$ the generalized affine flag variety with $\CZ_x(k)=G/G_{x,0+}$ and define $\CZ_{x,d,e}$ to be the reduced sub(ind-)scheme with (see Definition \ref{genASF})
\[\CZ_{x,d,e}=\{g\in G/G_{x,0+}\,|\,\ad(g^{-1})\gamma\in e+\Lg_{x,d:d+,\le 1}+\Lg_{x,d+}\}.\]\hp

It suffices to show that $\CZ_{x,d,e}\ra\X_{x,d}$ restricted to $\X_{x,d,e'}$ has \'{e}tale-local sections. Note that this map comes from the natural map $\CZ_x\ra\X_x$, where $\X_x$ is the usual affine flag variety; $\X_x(k)=G/G_x$. By chasing through the definition of $\CZ_x$ and $\X_x$ as ind-scheme constructed via affine Schubert cells as subschemes of Grassmannians, one can show Lemma \ref{etale} implies that the map $\CZ_x\ra\X_x$ is \'{e}tale-locally trivial.\p

Now for the moment denote by $\CZ^*$ the preimage of $\X_{x,d,e'}$ under $\CZ_x\ra\X_x$. Then there is a natural map $\pi:\CZ^*\ra\Lg_{x,d:d+}$ by $\pi(g)=\ad(g^{-1})\gamma$, and $\CZ_{x,d,e}$ is the preimage of $e+\Lg_{x,d:d+,\le 1}+\Lg_{x,d+}$. With abuse of language let us write $s:\X_{x,d,e'}\ra\CZ^*$ an \'{e}ale-local section. Then $\pi\circ s$ has image in the orbit of $e'$, which is isomorphic to $\sG_x/Z_{\sG_x}(e)$. By Lemma \ref{etale} choose $s':\sG_x/Z_{\sG_x}(e)\ra\sG_x$ an \'{e}tale-local section. Then $s'\circ\pi\circ s$ is a map on an \'{e}tale neighborhood of $\X_{x,d,e'}$ to $\sG_x$.\p

Consider the \'{e}tale-local section $s'':\X_{x,d,e'}\ra\CZ^*$ by $s''(g)=s(g)\cdot\left((s'\circ\pi\circ s)(g)\right)^{-1}$. From the definition of $s''$ one has $\pi(s''(g))=id$. But this implies $s''$ trivializes the condition that defines $\CZ_{x,d,e}$ in $\CZ^*$; i.e. if we pull back to the \'{e}tale neighborhood of $\X_{x,d,e'}$ on which $s''$ is defined, then $\CZ_{x,d,e}\ra\X_{x,d}$ becomes a trivial fibration map with the fiber stated in Theorem \ref{Amherst}(iii).
\end{proof}\p

\begin{remark} Theorem \ref{Amherst} provides an algorithm to partially understand $\X_{x,d,e}$ from $\X_{x',d',e'}^{G'}$ for some twisted Levi subgroup $G'$. By applying this algorithm a finite number of times one arrives at some affine Springer fiber for tori which is simply the cocharacter lattice.
\end{remark}\hp

%\begin{remark} The fibers described in (iii) and (iv) of Theorem \ref{Amherst} are the images of $\hat{\CH}_e(\gamma)$ into $\CH_e(\gamma)$ in Section \ref{secgeom}, with $\gamma=e'$ and $\bar{\gamma}$ respectively. By Lemma \ref{smooth} they are smooth; in particular the fibrations in Theorem \ref{Amherst}(iii), (iv) are smooth. Consequently, this algorithm also provides an explicit way to stratify an affine Springer fiber into smooth locally closed subschemes.
%\end{remark}\hp

We say a bit more about the fibers in the theorem. They have the following properties:

\begin{lemma} (i) If the fiber in Theorem \ref{Amherst}(iii)
\[\CH_{(x,d,e)}(e')=\{g\in\sG_{x}/Z_{\sG_{x}}(f)\cdot\sG_{x,<0}\,|\,\ad(g^{-1})e'\in e+\Lg_{x,d:d+,\le 1}\}.\]
is non-empty, then the associated orbit (Definition \ref{assoorb}) of $(x,d,e)$ is contained in the analytic closure of the associated orbit of $(x,d,e')$.\hp

(ii) $\CH_{(x,d,e)}(e')$ is smooth. The same is true for $\CH_{(x,d,e)}(\bar{\gamma})$ in (iv).\hp

(iii) If $e'=e$, then $\CH_{(x,d,e)}(e)$ is isomorphic to an affine space of dimension equal to $\dim Z_{\sG_{x,>0}}(e)$.
\end{lemma}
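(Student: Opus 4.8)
The plan is to prove the three parts essentially in parallel with the treatment of $\hat{\CH}_e(\gamma)$ in Section \ref{secgeom}, transporting the arguments there to the ``reductive quotient'' setting where $G$ is replaced by $\sG_x$, the graded piece $\Lg^{(1)}$ by $\Lg_{x,d:d+}$, and the graded action by the $\lambda$-grading together with the $\theta$-grading of Lemma \ref{RY}. Throughout I fix the $\Lsl$-triple $(e,h,f)$ and recall $\lambda=$ associated cocharacter, so that $\Lg_{x,d:d+,i}$ is the weight-$i$ space; note $Z_{\sG_x}(f)\cdot\sG_{x,<0}$ is exactly the stabilizer in $\sG_{x,\le 0}$ of $e+\Lg_{x,d:d+,\le 1}\subset\Lg_{x,d:d+,\le 2}$, by the same argument as for $E$ in Section \ref{secgeom} (using that $Z_{\sG_x}(f)=Z_{\sG_{x,0}}(f)\cdot Z_{\sG_{x,<0}}(f)$ and that $e,f$ determine $h$).

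For (i): if $\CH_{(x,d,e)}(e')\neq\emptyset$ pick $g$ with $\ad(g^{-1})e'\in e+\Lg_{x,d:d+,\le1}$. Then $e'\in \ad(g)\big(e+\Lg_{x,d:d+,\le1}\big)\subset e+\Lg_{x,d:d+,\le1}$ since $\ad(g)$ for $g\in\sG_x$ preserves the full space $\Lg_{x,d:d+}$ but one must instead argue that $e'$ lies in the $\sG_x$-saturation of $e+\Lg_{x,d:d+,\le1}$; more precisely, the associated orbit of $(x,d,e)$ in $\Lg$ is characterized (see the discussion after Definition \ref{assoorb}) by: a nilpotent orbit $\CO'$ of $\Lg$ has $\mathrm{clos}(\CO')\supset\CO$ iff $\CO'$ meets $e+\Lg_{x,d:d+,\le1}+\Lg_{x,d+}$. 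Since any lift of $e'$ to $\Lg_{x,d}$ lies in the associated orbit $\CO_{e'}$ of $(x,d,e')$, and $\CH_{(x,d,e)}(e')\neq\emptyset$ forces $\CO_{e'}$ to meet $e+\Lg_{x,d:d+,\le1}+\Lg_{x,d+}$ after translating by an element of $G_x$ (which acts on $\Lg_{x,d:d+}$ through $\sG_x$), we get $\mathrm{clos}(\CO_{e'})\supset\CO_{e}$, i.e. the associated orbit of $(x,d,e)$ is in the analytic closure of that of $(x,d,e')$.

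For (ii): this is the exact analogue of Lemma \ref{smooth}. The homogeneous space $\sG_x/Z_{\sG_x}(f)\cdot\sG_{x,<0}$ is smooth, so it suffices to show the conditions $\ad(g^{-1})e'\in e+\Lg_{x,d:d+,\le1}$ cut out a local complete intersection, i.e. that the differential of $g\mapsto$ (projection of $\ad(g^{-1})e'$ to $\Lg_{x,d:d+,\ge2}/\Lg_{x,d:d+,2}$... ) is surjective onto the normal directions; concretely one checks $[\Lie(\sG_x/Z_{\sG_x}(f)\sG_{x,<0}),\,\gamma]$ meets $\Lg_{x,d:d+,\le1}$ transversally at any solution point, and this follows from the $\Lsl$-theory: $[\,\Lg_{x,0:0+,\ge0}\,,\,e\,]=\Lg_{x,d:d+,\ge2}$ by surjectivity of $\ad(e)$ in non-negative weights, which complements $\Lg_{x,d:d+,\le1}$. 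The same computation at $\bar{\gamma}=\ad(h^{-1})(\bar\gamma_0+e')$ works because $\bar\gamma_0$ is $\lambda$-weight $0$ (it is central of depth $d$ for $G'$, and the associated cocharacter lies in $\sG_x$) and semisimple, so it does not obstruct the transversality — the obstruction computation only sees the nilpotent part $e'$, whose image has the needed surjectivity.

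For (iii): when $e'=e$, $\CH_{(x,d,e)}(e)=\{g\in\sG_x/Z_{\sG_x}(f)\sG_{x,<0}\,|\,\ad(g^{-1})e\in e+\Lg_{x,d:d+,\le1}\}$. I would argue that any such $g$ can be taken in $\sG_{x,\ge0}$: the image $\ad(g^{-1})e$ is nilpotent with the same $\sG_x$-orbit as $e$, so lies in the orbit of $e$, and the condition of landing in $e+\Lg_{x,d:d+,\le1}$ together with $\Lsl$-theory (uniqueness of the parabolic direction) pins $g$ down. Then $Z_{\sG_x}(f)\sG_{x,<0}\cap\sG_{x,\ge0}=Z_{\sG_{x,0}}(e)\cdot\sG_{x,>0}$ by weight considerations, reducing to $\{g\in\sG_{x,\ge0}/Z_{\sG_{x,0}}(e)\sG_{x,>0}\}$ with $\ad(g^{-1})e\in e+\Lg_{x,d:d+,\le1}$; but on $\sG_{x,\ge0}$ the action on $e$ only adds weights $>2$, so the condition is vacuous, and the left coset space is $\sG_{x,>0}/Z_{\sG_{x,>0}}(e)$, which by the exponential/Lang-theoretic identification of unipotent groups with their Lie algebras is an affine space of dimension $\dim\sG_{x,>0}-\dim Z_{\sG_{x,>0}}(e)=\dim[\sG_{x,>0},e]$; and by $\Lsl$-theory $[\Lg_{x,0:0+,>0},e]$ together with $Z_{\Lg_{x,0:0+,>0}}(e)$ spans $\Lg_{x,0:0+,>0}$, so I instead get dimension $\dim[\,\Lg_{x,0:0+,>0},e\,]$; identifying this with $\dim Z_{\sG_{x,>0}}(e)$ as claimed will require the symmetry $\dim\ker\ad(e)|_{\Lg_{x,0:0+,>0}}=\dim\mathrm{coker}$ coming from the self-duality of the $\Lsl$-module structure in positive weights relative to weight $2$ — I expect this bookkeeping (matching the positive-weight piece of $\mathfrak{g}_0$ against $\mathfrak{g}^{(1)}$ in weights $\ge 3$) to be the only genuinely fiddly point, and the main obstacle to write cleanly.
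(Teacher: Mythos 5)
Parts (i) and (ii) of your proposal follow essentially the paper's route and are acceptable up to two imprecisions. In (i), ``any lift of $e'$ to $\Lg_{x,d}$ lies in the associated orbit $\CO_{e'}$'' is false (a lift need not even be nilpotent); what you need, and what the defining property of the associated orbit actually supplies, is that \emph{some} nilpotent lift of $\ad(g^{-1})e'$ lies in $\CO_{e'}$ -- with that substitution your argument is the paper's. In (ii), the transversality must be checked at the actual solution point, where the element is $\ad(g^{-1})e'$ (resp. $\ad(g^{-1})\bar\gamma$) lying in $e+\Lg_{x,d:d+,\le 1}$, not $e$ itself; this is fine via the weight-filtration argument whose associated graded map is $\ad(e):\Lg_{x,0:0+,i}\ra\Lg_{x,d:d+,i+2}$, surjective for $i\ge 0$. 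In particular your claim that $\bar\gamma_0$ has $\lambda$-weight $0$ is both unjustified ($\lambda$ is attached to the datum $(x,d,e)$, which has nothing to do with $\bar\gamma_0$) and unnecessary: the argument uses nothing about $\bar\gamma$ beyond the membership $\ad(g^{-1})\bar\gamma\in e+\Lg_{x,d:d+,\le 1}$. (The paper instead first conjugates into the slice $e+Z_{\Lg_{x,d:d+}}(f)$ by a unique element of $\sG_{x,<0}$ and checks transversality of the orbit with the slice.)

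The genuine gap is in (iii). First, $Z_{\sG_x}(f)\cdot\sG_{x,<0}$ is contained in $\sG_{x,\le 0}$, so its intersection with $\sG_{x,\ge 0}$ is $Z_{\sG_{x,0}}(f)=Z_{\sG_x}(e,h,f)$, not $Z_{\sG_{x,0}}(e)\cdot\sG_{x,>0}$; no copy of $\sG_{x,>0}$ gets quotiented out. Second, the condition is not vacuous on $\sG_{x,\ge 0}$: for $g\in\sG_{x,\ge 0}$ one has $\ad(g^{-1})e\in\ad(l^{-1})e+\Lg_{x,d:d+,\ge 3}$ (with $l$ the $\sG_{x,0}$-part of $g$), so $\ad(g^{-1})e\in e+\Lg_{x,d:d+,\le 1}$ forces $\ad(g^{-1})e=e$. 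Because of these two errors you land on the orbit $\sG_{x,>0}/Z_{\sG_{x,>0}}(e)$ instead of the stabilizer; its dimension is $\dim[\Lg_{x,0:0+,>0},e]=\dim\Lg_{x,d:d+,\ge 3}$, which is in general not equal to $\dim Z_{\sG_{x,>0}}(e)$ (already for $\Lsl$ with trivial $\theta$ and $e$ regular these are $0$ and $1$), so the ``self-duality'' you hope for does not exist and cannot repair the computation. The correct argument, and the paper's, is: after straightening to the slice, show that the $\sG_x$-orbit of $e$ meets $e+Z_{\Lg_{x,d:d+}}(f)$ only at $e$ (the paper uses the contracting curve $t^{-2}\ad(\lambda(t))$ together with the transversality from (ii), which makes the intersection zero-dimensional); then $\CH_{(x,d,e)}(e)\cong Z_{\sG_x}(e)/Z_{\sG_x}(e,h,f)\cong Z_{\sG_{x,>0}}(e)$, a unipotent group in large characteristic, hence an affine space of dimension $\dim Z_{\sG_{x,>0}}(e)$.
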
\hp

\begin{proof}(i) By the defining property of associated orbits, there exists some nilpotent element $N\in\Lg_{x,d}$ lifting $\ad(g^{-1})e'$ so that $N$ is contained in the associated orbit of $(x,d,e')$. But then that the image of $N$ lies in $e+\Lg_{x,d:d+,\le 1}$ says that the closure of the orbit of $N$ contains the associated orbit of $(x,d,e)$.\p

(ii) This is essentially Lemma \ref{smooth}; we're looking at the image of $\hat{\CH}_e(\gamma)$ into $\CH_e(\gamma)$ in Section \ref{secgeom}. Nevertheless we reproduce the proof with the notation here. First note that we have
\[\CH_{(x,d,e)}(e')\cong\{g\in\sG_{x}/Z_{\sG_{x}}(e,h,f)\,|\,\ad(g^{-1})e'\in e+Z_{\Lg_{x,d:d+}}(f)\}.\]
This is a property of $\mathfrak{sl}_2$-triples; one can conjugate an element in $e+\Lg_{x,d:d+,\le1}$ into $e+Z_{\Lg_{x,d:d+}}(f)$ using a unique element in $\sG_{x,<0}$. For smoothness it suffices to show that the orbit of $e'$ intersect $e+Z_{\Lg_{x,d:d+}}(f)$ transversally. This can be checked by computing tangent spaces; without loss of generality we may assume $e'\in e+Z_{\Lg_{x,d:d+}}(f)$ and check at $e'$, for which we only need $\Lg_{x,d:d+}=\ad(\Lg_{x,0:0+})(e')+Z_{\Lg_{x,d:d+}}(f)$. This last equality follows from properties of representations of $\mathfrak{sl}_2$. The same proof goes when $e'$ is replaced by $\bar{\gamma}$ in (iv).\p

(iii) Now $e'=e$. We claim that if $e'':=\ad(g^{-1})e\in e+Z_{\Lg_{x,d:d+}}(f)$, then $e''=e$. Recall that $e,h,f$ and $\Lg_{x,d:d+}$ all can be embedded into a Lie algebra $\bar{\Lg}$ by Lemma \ref{RY}, and we can work in $\bar{\Lg}$. Suppose otherwise $e''\not=e$, then with the associated cocharacter $\lambda:\mathbb{G}_m\ra\bar{G}$ ($\Lie\bar{G}=\bar{\Lg}$), we have $t^{-2}\ad(\lambda(t))(e'')$ is a curve in the intersection of $e+Z_{\bar{\Lg}}(f)$ and the orbit of $e$. However, in the proof of (ii) we see that $e+Z_{\bar{\Lg}}(f)$ and the orbit of $e$ intersect transversally, and by dimension computing one sees that they intersect in dimension zero. This yields a contradiction.\p

With the claim, we see that 
$\displaystyle\CH_{(x,d,e)}(e)\cong\{g\in\sG_{x}/Z_{\sG_{x}}(e,h,f)\,|\,\ad(g^{-1})e=e\}\cong Z_{\sG_{x,>0}}(e)$.\end{proof}\p

\begin{remark}\label{basefield}
If instead $k$ is a finite field (or other perfect field that is not algebraically closed), then we might not be able to choose representatives defined over $k$ for our nilpotent orbits. Nevertheless, one checks that the stratification is canonical, and thus the algorithm still works over $k$ while we use the Bruhat-Tits theory for $\bar{k}$ and $F=\bar{k}((t))$. The Frobenius (or $\mathrm{Gal}(\bar{k}/k)$ in general) acts non-trivially on Bruhat-Tits data, and will act in a more subtle manner on the components of our strata.
\end{remark}\hp

\begin{remark} As remarked before Definition \ref{assococh}, our $\lambda$ in Theorem \ref{Amherst}(ii) is chosen up to the Weyl group $N_{\sG_x}(\sS)/Z_{\sG}(\sS)$ at $x$, at least so if we insist on having $\lambda$ and $x$ on the apartment $\A$ corresponding to the pre-chosen maximal split torus $\uS$.
\end{remark}\hp

\p
\bibliographystyle{amsalpha}
\bibliography{biblio}

\end{document}